\bmdefine{\BA}{A}
\bmdefine{\BB}{B}
\bmdefine{\BI}{I}
\bmdefine{\Bi}{i}
\bmdefine{\Bp}{p}
\bmdefine{\Bt}{t}
\bmdefine{\BX}{X}
\bmdefine{\BY}{Y}
\bmdefine{\Bb}{b}
\bmdefine{\Bx}{x}
\bmdefine{\By}{y}
\bmdefine{\Bz}{z}
\newcommand{\Z}{{\mathbb Z}}
\newcommand{\R}{{\mathbb R}}
\newcommand{\iid}{\stackrel{\mathrm{iid}}{\sim}}
\newcommand{\cB}{{\cal B}}
\newcommand{\cF}{{\cal F}}
\newcommand{\cI}{{\cal I}}
\newcommand{\rank}{\operatorname{rank}}
\newcommand{\kerz}{\operatorname{\ker_{\mathbb Z}}}
\newcommand*\dashline{\rotatebox[origin=c]{90}{$\dabar@\dabar@\dabar@$}}
\begin{document}

\title{RUNNING MARKOV CHAIN WITHOUT MARKOV BASIS}

\author{H. HARA}

\address{Faculty of Economics, Niigata University,\\
Niigata, 950-2181, Japan\\
E-mail: hara@econ-niigata-u.ac.jp}

\author{S. AOKI}

\address{
Department of Mathematics and Computer Science, 
Kagoshima University\\ 
Kagoshima, 890-0065, Japan\\
E-mail: aoki@sci.kagoshima-u.ac.jp\\
JST, CREST}

\author{A. TAKEMURA}

\address{Department of Mathematical Informatics, \\
Graduate School of Information Science and Technology, 
University of Tokyo, \\
Tokyo, 113-8656, Japan\\
E-mail: takemura@stat.t.u-tokyo.ac.jp\\
JST, CREST}

\begin{abstract}
% This article provides an algorithm for sampling contingency table 
The methodology of Markov basis initiated by Diaconis and Sturmfels\cite{diaconis-sturmfels}
stimulated active research on Markov bases for more than ten years.
It also motivated improvements of algorithms for Gr\"obner basis computation  for toric ideals,
such as those implemented in 4ti2\cite{4ti2}.
However at present explicit forms of Markov bases are known only 
for some relatively simple models, such as the decomposable models of contingency tables.
Furthermore general algorithms for Markov bases computation often fail to
produce Markov bases even for moderate-sized  models
in a practical amount of time. 
Hence so far we could not perform exact tests based on Markov basis methodology for
many important practical problems. 

In this article we propose to use lattice bases for performing exact tests, in the case where
Markov bases are not known.  Computation of lattice bases is much easier than that of Markov bases.
With many examples we show that the approach with lattice bases is practical. 
We also check that its performance is comparable to Markov bases for
the problems where Markov bases are known.
\end{abstract}

\keywords{exact test; lattice basis; MCMC.}

\bodymatter

\section{Introduction}
\label{sec1}
Since Diaconis and Sturmfels\cite{diaconis-sturmfels} introduced a
Markov basis and proposed an algorithm of exact test by
sampling contingency tables sharing a sufficient statistic, % with a Markov basis, 
the algebraic and statistical properties of Markov bases for
toric models have been extensively studied. 
Once a Markov basis is given, we can perform an exact test
by using the basis.
There exist algebraic algorithms to compute a Markov basis and 
a Markov basis of models for relatively small contingency tables can be 
computed by a computer algebra system such as 4ti2\cite{4ti2}.
However the computational cost of these algorithms 
is very high and at present it is difficult to
compute a Markov basis for even moderate-sized models by such softwares
in a practical amount of time. 

For important models for applications  we can investigate the structure of Markov
bases for the model. %for  and provide 
% an adaptive algorithm for sampling moves in a Markov basis.   
In general, however, the structure is complicated and explicit forms
Markov bases are known only for a few models  
such as the decomposable model\cite{dobra-2003bernoulli}, no-three-factor 
interaction model for relatively small tables\cite{aoki-takemura-2003anz}.   
Considering the fact that an exact test is needed especially when the
sample size is relatively small for the degrees of freedom of the model
and the chi-square approximation of a test statistic is not accurate, 
%possesses low reliability,  
these results at this point are not satisfactory from a practical viewpoint. 

The set of contingency tables sharing a sufficient statistic is called a fiber. 
Markov basis is defined as a set of moves connecting every fiber. 
One reason for the complexity of Markov bases is that they guarantee the
connectivity of every fiber. 
In practice, we only need to connect a fiber which a given data set belongs to. 
Sometimes we can find a useful subset of a Markov basis which has a
simple structure and guarantees the connectivity of particular fibers
\cite{aoki-takemura-2005jscs,chen-dinwoodie-dobra-huber2005,
hara-takemura-yoshida-logistic}.   
However, again,  
such a subset is not easy to obtain in general \cite{chen-dinwoodie-yoshida-2008}.

In view of these difficulties with Markov bases,  for performing exact tests
we propose to use a lattice basis, which is a basis of the
integer kernel of a configuration matrix,  instead of a Markov basis. 
Computation of lattice bases is much easier than computation of Markov basis.
With many examples we show that the proposed approach is practical. 
%
% A lattice basis is a basis %of lattices 
% of the integer kernel of
% a configuration matrix. 
Note that a lattice basis itself does not guarantee the connectivity of every fiber.
However every move is written as an integer combination of elements of 
a lattice basis.   
%A lattice basis is easy to compute. 
% Given a lattice basis, we randomly generate integer coefficients 
% and an integer combination with there random cofficients.  If the
Hence,  if we generate moves in such a way that every integer  combination of elements of
a lattice basis has a positive probability, then we can indeed 
guarantee the connectivity of every fiber.

When we run a Markov chain over a fiber, the transition probabilities
can be easily adjusted by the standard Metropolis-Hastings procedure.  Hence 
we can use any probability distribution for generating the moves,
as long as every integer combination of elements of
a lattice basis has a positive probability.

Based on the above observations, in this paper we discuss sampling of
contingency tables by using a lattice basis.  We propose  simple
algorithms for generating moves such that every move is generated with
a positive probability by using a lattice basis.  We can apply the
proposed method to models whose Markov basis is not easy to compute
and we show the usefulness of the proposed method through numerical
experiments.

The organization of the this paper is as follows.
In Section \ref{sec:def-MBLB} we give a brief review on a Markov basis
and lattice basis.
In Section \ref{sec:LB} we propose algorithms for generating moves by
using lattice basis and in Section \ref{sec:sim} we 
show the practicality and usefulness of the
proposed method through numerical experiments.

\section{Markov basis and lattice basis}
\label{sec:def-MBLB}
In this section we give a brief review on a Markov basis and a lattice
basis. 
Let $\Bx = \{x(\Bi), \Bi \in \cI\}$ denote a contingency table, where 
$x(\Bi)$ is a cell frequency for a cell $\Bi$ and $\cI$ is the set of
cells. 
When we order the elements of $\Bx$ appropriately, $\Bx$ is considered as an
$\vert \cI \vert$ dimensional column vector. 
Let $\Bt$ denote the vector of the sufficient statistic for a toric model.
In a toric model there exists an integer matrix $A$ satisfying 
\[
A \Bx = \Bt.
\]
$A$ is called a configuration matrix associated with the model. 
The set of contingency tables sharing $\Bt$ is called a fiber and
denoted by $\cF_\Bt$.

Consider a goodness-of-fit test for the model.
When $\Bt$ is fixed, $\Bx$ is distributed exactly as a hypergeometric
distribution over the fiber $\cF_\Bt$.
If we can enumerate the elements of the fiber, it is possible to
evaluate a test statistic based on the exact hypergeometric distribution.
In general the enumeration is infeasible and the evaluation of the distribution of
a test statistic is done by sampling contingency tables.

Let
\[
\kerz A = \ker A \cap \Z^{|{\cal I}|} = \{ \Bz\in \Z^{|{\cal I}|} \mid A \Bz=0\}
\]
denote the integer kernel of $A$.
An element of $\kerz A$ is called a move for the model. 
By adding or subtracting a move $\Bz = \{z(\Bi), \Bi \in\cI\} \in \kerz
A$, a contingency table  
$\Bx$ is transformed to a table in the same fiber $\By = \Bx + \Bz$, as long
as $\By$ does not contain a negative cell. 
A finite set of moves $\cB=\{\Bz_1, \dots, \Bz_M\}$ is called a Markov
basis if for every fiber all the states become mutually accessible by moves
in $\cB$.
Consider an undirected graph $G_{\Bt, \cB}$  whose vertices are the
elements of a fiber $\cF_\Bt$. 
We draw an edge between $\Bx \in \cF_\Bt$ and $\By \in \cF_\Bt$ if there
exists $\Bz\in \cB$ such that $\By=\Bx + \Bz$  or $\By = \Bx - \Bz$. 
$\cB=\{\Bz_1, \dots, \Bz_M\}$ is a Markov basis if and only if 
$G_{\Bt, \cB}$ is connected for all $\Bt$. 
In this way a Markov basis guarantees the connectivity of every fiber. 
Combined with the standard Metropolis-Hastings procedure, 
the connectivity enables us to sample contingency tables from an
irreducible Markov chain whose stationary distribution is the
hypergeometric distribution by Markov chain Monte Carlo (MCMC) method. 
Therefore once a Markov basis is obtained, we can evaluate the distribution
of a test statistic of a conditional test based on the exact distribution.
% In general, however, it is not easy to obtain a Markov basis except for
% some simple structured model such as the decomposable model.

A move $\Bz$ is written as a difference of its positive part and
negative part as $\Bz = \Bz^+ - \Bz^-$, where $z^+(\Bi)=\max(z(\Bi),0)$ and
$z^-(\Bi)=\max(-z(\Bi),0)$, $\Bi \in {\cal I}$.
Consider a binomial $\Bp^{\Bz^+} - \Bp^{\Bz^-}$ corresponding to $\Bz$, 
where $\Bp^{\Bz^\pm} = \prod_{\Bi \in \cI} p(\Bi)^{z^\pm(\Bi)}$ and $p(\Bi)$
are indeterminates. 
The degree of the binomial $\Bp^{\Bz^+} - \Bp^{\Bz^-}$ is called the 
degree of $\Bz$. 
Let $I_A$ be the toric ideal associated with a configuration $A$. 
Then $\Bp^{\Bz^+} - \Bp^{\Bz^-} \in I_A$ if and only if 
$\Bz =\Bz^+ -\Bz^-$ is a move. 
Algebraically a Markov basis is defined as a generator of the toric
ideal $I_A$. 
A Gr\"obner basis of $I_A$ forms a Markov basis\cite{diaconis-sturmfels}.
A Markov basis or a Gr\"obner basis of models for relatively small
contingency tables can be computed by a computer algebra system such as
4ti2\cite{4ti2}.   
However the computational cost is very high and for even moderate-sized
models it is difficult to compute a Markov basis or Gr\"obner basis in a
practical amount of time.  

Let $d=\dim \ker A = \vert \cI \vert - \rank A$ be the dimension of
linear space spanned by the elements of $\ker A$ in $\R^{|{\cal I}|}$.  
It is a standard fact that the integer lattice $\kerz A$ possesses a lattice
basis
${\cal L}=\{\Bz_1, \dots,\Bz_d\}$, such that 
every $\Bz \in \kerz A$ is a unique integer combination of 
$\Bz_1, \dots, \Bz_d$\cite{schrijver}.  
Given $A$, it is relatively easy to compute such a basis of $\kerz A$ using the
Hermite normal form of $A$.  

Usually a lattice basis contains exactly  $d$ elements. 
In this paper we allow {\it redundancy} of a lattice basis
and call a finite set $\cal L$ of moves a lattice basis if every move is
written by an integral combination of the elements of $\cal L$.
%for simplicity.  
As we mentioned it is relatively easy to compute a lattice
basis for a given $A$.  Also, for many statistical models, where a Markov basis
is hard to obtain, we can more easily identify a lattice basis.  An example of this
is the Lawrence lifting discussed in Section \ref{subsec:lawrence}.
 
Let $S$ be a polynomial ring and let $I_{\cal L}=\langle \Bp^\Bz \mid \Bz\in {\cal L}\rangle$ 
be the ideal generated by a lattice basis $\cal L$. 
The toric ideal $I_A$ is obtained from $I_{\cal L}$ by taking saturation
\cite{sturmfels1996, miller-sturmfels} 
\begin{align*}
 I_A &= \left(
 I_{\cal L} : \langle \prod_{\Bi \in \cI} x(\Bi) \rangle 
 \right)\\
 & := 
 \left\{ 
 y \in S \mid (\prod_{\Bi \in \cI} x(\Bi))^m y \in I_{\cal L}
 \text{ for some } 
 m > 0 
 \right\}.
\end{align*}
Intuitively this fact shows that 
%when the sample size is sufficiently 
when the frequency of each cell is sufficiently 
large, the fiber is connected by the lattice basis $\cal L$. 
% Although a lattice basis $\cal L$ itself does not guarantee the connectivity of every
% fiber in general, if we generate moves in such a way that every integer
% combination of the elements of $\cal L$ is generated with positive
% probability, then  connectivity of every fiber is guaranteed.
% In the following section we propose a simple algorithm for sampling
% integer combinations of elements of a lattice basis.  

\section{Sampling contingency tables with a lattice basis}
In this section we propose algorithms to generate a move based on a lattice bases.
We also give  lattice bases for higher Lawrence configurations.

\label{sec:LB}
\subsection{Generating moves by using a lattice basis}
Assume that ${\cal L} = \{\Bz_1,\ldots,\Bz_K\}$, $K \ge d$, is a lattice
basis. 
Then any move $\Bz \in \kerz A$ is expressed as
\[
\Bz = \alpha_1 \Bz_1 + \cdots + \alpha_K \Bz_K, 
\quad \alpha_1,\ldots,\alpha_K \in \mathbb{Z}.
\]
Then we can generate a move $\Bz$ by generating the integer coefficients
$\alpha_1, \dots, \alpha_K$.
In the numerical experiments in the next section we use the following
two methods to generate $\alpha_1,\ldots,\alpha_K$.
Both methods generate all integer combinations of elements of $\cal L$ 
with positive probabilities and hence guarantee the connectivity of all
fibers. 

\begin{algorithm}
 \begin{description}
  \item[Step 1] Generate $\vert \alpha_1 \vert, \ldots, \vert \alpha_K
	     \vert$ from Poisson distribution 
	     with mean $\lambda$, 
	     \[
	     \vert \alpha_k \vert \iid Po(\lambda)
	     \]
	     and exclude the case 
	     $\vert \alpha_1 \vert = \cdots = \vert \alpha_K \vert =
	     0$.
  \item[Step 2] $\alpha_k \leftarrow \vert \alpha_k \vert$ or 
	     $\alpha_k \leftarrow - \vert \alpha_k \vert$ with
	     probability $1/2$ for $k=1,\ldots,K$.
 \end{description}
\end{algorithm}

\smallskip

\begin{algorithm}
 \begin{description}
  \item[Step 1] Generate 
	     $\vert \alpha \vert = \sum_{i=1}^K \vert \alpha_i \vert$
	     from geometric distribution with parameter $p$
	     \[
	     \vert \alpha \vert \sim Geom(p)
	     \]
	     and allocate $\vert \alpha \vert$ to
	     $\alpha_1,\ldots,\alpha_K$ according to  
	     multinomial distribution
	     \[
	      \alpha_1,\ldots,\alpha_K \sim 
	     Mn(\vert \alpha \vert;1/K,\ldots,1/K)
	     \]
  \item[Step 2] $\alpha_k \leftarrow \vert \alpha_k \vert$ or 
	     $\alpha_k \leftarrow - \vert \alpha_k \vert$ with
	     probability $1/2$ for $k=1,\ldots,K$.
 \end{description}
\end{algorithm}

\subsection{A lattice basis for higher Lawrence configuration}
\label{subsec:lawrence}
Consider a configuration matrix of the form
\[
 \Lambda(A) = 
 \begin{pmatrix}
  A & 0\\
  I & I
 \end{pmatrix},
\]
where $I$ is an identity matrix. 
$\Lambda(A)$ is called the Lawrence lifting of $A$ or a Lawrence
configuration\cite{santos-sturmfels-2003}. 
More generally the $r$-th Lawrence configuration is defined by
\vspace{0.3cm}
\begin{equation}
 \label{eq:rth-Lawrence}
\Lambda^{(r)}(A)=
 \begin{pmatrix}
 \smash{\rlap{$\overbrace{\phantom{A \hspace{0.17cm} 0
 \hspace{0.17cm} \cdots 
% \hspace{0.17cm} \cdots 
 \hspace{0.17cm} 0}}^{r-1}$}}A & 0 & \cdots %& \cdots 
 & 0 & 0\\
 0 & A & 0 & \cdots  & 0\\
 \vdots & \ddots & \ddots & \ddots & \vdots\\
 0 & \cdots & 0 & A  & 0\\
 I &  I &  \cdots & I & I 
\end{pmatrix}.
\end{equation}
Many practical statistical models including the no-three-factor
interaction model and the discrete logistic regression model discussed
in the following section have %the 
Lawrence configurations. 
In general a Markov basis for the Lawrence configuration is %known to be
very difficult to
compute\cite{chen-dinwoodie-dobra-huber2005,hara-takemura-yoshida-logistic}.    
On the other hand it is easy to compute a lattice basis and the proposed
method is available even for such models. 
% Although the direct computation of a lattice basis of $\Lambda^{(r)}(A)$
% is easy, 
We can compute a lattice basis of $\Lambda^{(r)}(A)$
by the following propositions. 
% In the propositions ${}^\prime$ denote the transpose
% of a matrix or a vector.

\begin{proposition}
 \label{prop:lawrence}
 Let the column vectors of a matrix $B$ form a lattice basis of $A$. 
 Then the column vectors of %$(B^\prime ,-B^\prime)^\prime$ 
$\begin{pmatrix} B \\ -B \end{pmatrix}$ 
form a
 lattice basis of $\Lambda(A)$.
\end{proposition}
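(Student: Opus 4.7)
The plan is to reduce the claim directly to the defining property of a lattice basis of $A$ by first giving an explicit description of $\kerz \Lambda(A)$.

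First, I would unpack what it means for a vector $\begin{pmatrix} \Bx \\ \By \end{pmatrix} \in \Z^{2|\cI|}$ to lie in $\kerz \Lambda(A)$. The block form of $\Lambda(A)$ immediately gives the two conditions $A\Bx = 0$ and $\Bx + \By = 0$. Hence $\By = -\Bx$ is forced, and the remaining condition is $\Bx \in \kerz A$. This produces the explicit characterisation
\[
\kerz \Lambda(A) \;=\; \left\{ \begin{pmatrix} \Bx \\ -\Bx \end{pmatrix} \;\Big|\; \Bx \in \kerz A \right\}.
\]

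Second, I would apply the hypothesis that the columns of $B$ form a lattice basis of $A$ in the (possibly redundant) sense adopted in the paper: every $\Bx \in \kerz A$ can be written as $\Bx = B \boldsymbol{\alpha}$ for some integer vector $\boldsymbol{\alpha}$. Substituting this into the characterisation above gives
\[
\begin{pmatrix} \Bx \\ -\Bx \end{pmatrix} \;=\; \begin{pmatrix} B\boldsymbol{\alpha} \\ -B\boldsymbol{\alpha} \end{pmatrix} \;=\; \begin{pmatrix} B \\ -B \end{pmatrix}\boldsymbol{\alpha},
\]
which shows that the columns of $\begin{pmatrix} B \\ -B \end{pmatrix}$ integrally span $\kerz \Lambda(A)$. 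Conversely, any integer combination of these columns has the form $\begin{pmatrix} B\boldsymbol{\alpha} \\ -B\boldsymbol{\alpha} \end{pmatrix}$ with $B\boldsymbol{\alpha} \in \kerz A$, so it lies in $\kerz \Lambda(A)$. Both inclusions together give the claim.

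There is no real obstacle here: the argument is a one-line manipulation once the kernel has been described. The only point that deserves a moment of care is the bookkeeping of the block rows of $\Lambda(A)$ — in particular that the second block of rows $(I \ \ I)$ forces $\By = -\Bx$ and hence eliminates any freedom in the second coordinate block — but this is essentially immediate from the definition.
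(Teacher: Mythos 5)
Your proof is correct and follows essentially the same route as the paper's: the block structure of $\Lambda(A)$ forces the second coordinate block to be the negative of the first, the first block lies in $\kerz A$ and is therefore an integral combination $B\boldsymbol{\alpha}$, and the claim follows. The only (minor) difference is that you also verify the easy converse inclusion explicitly, which the paper leaves implicit.
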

\begin{proof}
 Let $\Bx$ and $\By$ be two contingency tables in the same fiber for 
 $\Lambda(A)$.
 Let $\vert \cI \vert =2n$ be the number of cells.
 Then we note that $n$ is the number of columns of $A$. 
 Write $\Bx = (\Bx_1^\prime, \Bx_2^\prime)^\prime$, 
 where $\Bx_1$ and $\Bx_2$ are $n \times 1$ column vectors and 
${}^\prime$ denotes the transpose. 
 In the same way, write 
 $\By = (\By_1^\prime, \By_2^\prime)^\prime$.
 Let 
 \[
 \Bz = 
 \begin{pmatrix}
  \Bz_1\\
  \Bz_2
 \end{pmatrix}
 = \Bx - \By 
 = 
 \begin{pmatrix}
  \Bx_1 - \By_1\\ 
  \Bx_2 - \By_2
 \end{pmatrix}
 \]
 be a move of $\Lambda(A)$. 
 Since $A \Bz_1=0$, 
 $\Bz_1$ is written by an integer linear combination of $B$ as 
 $\Bz_1 = B \bm{\alpha}$, where 
 $\bm{\alpha}$ is an $l \times 1$ integer vector. 
 $\Bz_0 + \Bz_1 =0$ implies that $\Bz_1 = -B \bm{\alpha}$ and
 therefore
 \[
 \Bz = 
 \begin{pmatrix}
  \Bz_1\\
  \Bz_2
 \end{pmatrix}
 = 
 \begin{pmatrix}
  B\\
  -B
 \end{pmatrix}
 \bm{\alpha}
 \]
 Hence %$[B,-B]$  the column vectors of
$\begin{pmatrix}
  B\\
  -B
\end{pmatrix}$
form a lattice basis of $\Lambda(A)$.
\end{proof}

\begin{proposition}
 \label{prop:rth-Lawrence}
 Let the column vectors of $B$ form a lattice basis of $A$. 
 Then the column vectors of 
\begin{equation}
\label{eq:Br}
 B^{(r)}=
\begin{pmatrix}
 \smash{\rlap{$\overbrace{\phantom{A \hspace{0.6cm} 0
 \hspace{0.6cm} \cdots 
 }}^{r-1}$}}B & 0 & \cdots %& \cdots 
 & 0\\
 0 & B & \ddots & \vdots\\
 \vdots & \ddots & \ddots & 0\\
 0 & \cdots & 0 & B\\
 -B &  -B &  \cdots & -B 
\end{pmatrix}.
\end{equation}
form a lattice basis of higher Lawrence configuration $\Lambda^{(r)}(A)$.  
\end{proposition}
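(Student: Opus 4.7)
My plan is to adapt, block by block, the argument used for Proposition \ref{prop:lawrence}. I would first fix an arbitrary move $\Bz \in \kerz \Lambda^{(r)}(A)$ and decompose it according to the $r$ column-blocks of $\Lambda^{(r)}(A)$,
\[
\Bz = (\Bz_1^\prime, \Bz_2^\prime, \ldots, \Bz_r^\prime)^\prime,
\]
where each $\Bz_i$ has $n$ components (the number of columns of $A$). Reading $\Lambda^{(r)}(A)\Bz = 0$ row-block by row-block then produces two kinds of constraints: $A \Bz_i = 0$ for $i = 1, \ldots, r-1$, one equation per $A$-block on the diagonal, and $\Bz_1 + \Bz_2 + \cdots + \Bz_r = 0$, coming from the bottom row of $I$-blocks.

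Next, because the columns of $B$ form a lattice basis of $\kerz A$, each $\Bz_i$ with $i \le r-1$ can be written as $\Bz_i = B \bm{\alpha}_i$ for some integer vector $\bm{\alpha}_i$. Substituting into the sum constraint and solving for the last block gives
\[
\Bz_r = -(\Bz_1 + \cdots + \Bz_{r-1}) = -B\bm{\alpha}_1 - \cdots - B\bm{\alpha}_{r-1},
\]
which is exactly the last block produced when $B^{(r)}$ is applied to $(\bm{\alpha}_1^\prime, \ldots, \bm{\alpha}_{r-1}^\prime)^\prime$. Reassembling the blocks yields
\[
\Bz = B^{(r)} (\bm{\alpha}_1^\prime, \ldots, \bm{\alpha}_{r-1}^\prime)^\prime,
\]
so every move in $\kerz \Lambda^{(r)}(A)$ is an integer combination of the columns of $B^{(r)}$, which is the definition of a (possibly redundant) lattice basis adopted earlier in the paper. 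A brief sanity check on the other side of the claim, namely that the columns of $B^{(r)}$ are themselves moves, follows immediately from $AB = 0$ together with the vanishing of each block-column sum.

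There is no genuine obstacle here; the argument is a straightforward block-calculation once the decomposition is in place. The only points requiring some care are noting that $A \Bz_r = 0$ need not be imposed as a separate constraint (it follows from the sum equation together with $A \Bz_i = 0$ for $i < r$, consistent with the fact that the last block-column of $\Lambda^{(r)}(A)$ carries no $A$-row) and observing that the coefficients $\bm{\alpha}_i$ produced this way are automatically integer-valued because $B$ is a lattice basis of $\kerz A$ over $\mathbb{Z}$. In particular no new properties of $B$ beyond the hypothesis enter, so the result is a clean generalization of Proposition \ref{prop:lawrence}.
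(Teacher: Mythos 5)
Your proof is correct and takes essentially the same approach as the paper's: decompose a move $\Bz$ into its $r$ blocks, use the lattice-basis property of $B$ to write each of the first $r-1$ blocks as $B\bm{\alpha}_i$, and let the sum constraint from the bottom row of identity blocks determine the last block. The only difference is presentational --- the paper eliminates one block at a time recursively, reducing to Proposition \ref{prop:lawrence}, whereas you solve all the constraints simultaneously in a single step; both are valid.
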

%The proof is similar to Proposition \ref{prop:lawrence} and omitted.
\begin{proof}
We can interpret the $r$-th Lawrence lifting as $r$ slices 
of the original contingency table corresponding to $A$.
The number of the cells for $\Lambda^{(r)}(A)$ is $|{\cal I}|=rn$, where
$n$ is the number of cells (columns) of $A$.
 Let 
 \[
 \Bz = 
 \begin{pmatrix}
  \Bz_1\\ \vdots\\
  \Bz_r
 \end{pmatrix}
 = \Bx - \By 
 = 
 \begin{pmatrix}
  \Bx_1 - \By_1\\  \vdots\\
  \Bx_r - \By_r
 \end{pmatrix}
 \]
 be a move of $\Lambda^{(r)}(A)$.   We can express $\Bz_1 = B \bm{\alpha}_1$.
Then using the $r$-th slice as ``pivots'' we can write
\[
 \Bz =  \begin{pmatrix}
  B\\ 0\\ \vdots \\  0 \\
  -B
 \end{pmatrix} \bm{\alpha}_1 
+ 
\begin{pmatrix}
0 \\ \Bz_2 \\ \vdots \\ \Bz_{r-1} \\ \Bz_r + B\bm{\alpha}_1 
\end{pmatrix}.
\]
Note that the first block of $\Bz$ is now eliminated.  Performing the
same operation recursively to other blocks we are left with 
the $(r-1)$-th slice and $r$-th slice, which is the same as the previous proposition.
\end{proof}

In  this proposition  we only used the last slice as pivots.  More symmetric
lattice basis can be obtained by columns of all pairwise differences of slices, 
for example for $r=3$ 
% \[
% (B_{12},\dots, B_{1r}, B_{23},\dots, B_{2r},\dots, B_{r-1,r}), 
% \qquad
% B_{ij}'=(0,\dots,0,\overbrace{B'}^{i},0,\dots, 0, \overbrace{-B'}^{j}, 0,\dots
% \]
\[
\begin{pmatrix}
B & B & 0 \\
-B & 0 & B\\
0 & -B & -B
\end{pmatrix}.
\]

The lattice bases in the above propositions may contain redundant
elements.   
However the set of moves including redundant elements are sometimes
preferable for moving  around the fiber. 
In general the computation of a lattice basis of $A$ is easier than
the  computation of a lattice basis of $\Lambda^{(r)}(A)$.
Sometimes we can compute a Markov basis for $A$ even when 
it is difficult to compute a Markov basis of $\Lambda^{(r)}(A)$. 
If a Markov basis for $A$ is known, we can use it as a lattice basis for $A$ and
apply the above propositions for obtaining a lattice basis of $\Lambda^{(r)}(A)$.
In the following numerical experiments 
we compute a lattice basis by using the above propositions.

\section{Numerical experiments}
\label{sec:sim}
In this section we apply the proposed method to the no-three-factor
interaction model and the discrete logistic regression model and 
show the usefulness of the proposed method.

\subsection{No-three-factor interaction model}
No-three-factor interaction model is a model for three-way contingency
tables.
Let $x_{i_1 i_2 i_3}$ and $p_{i_1 i_2 i_3}$ denote a cell frequency
and a cell probability of a cell $\Bi=(i_1,i_2,i_3)$ of a three-way
contingency table, respectively. 
Then the model is described as
\[
\log p_{i_1 i_2 i_3} = \mu_{12}(i_1 i_2) + \mu_{23}(i_2 i_3) +
\mu_{31}(i_3 i_1),
\]
where $\mu_{12}$, $\mu_{23}$ and $\mu_{31}$ are free parameters.
Aoki and Takemura\cite{aoki-takemura-2003anz} discussed the structure of
Markov basis for $3 \times 3 \times K$ table in detail and showed that 
there exists a Markov basis such that the largest degree of moves is 10.
In general, however, the structure of Markov bases for this model is
known to be complicated and the closed form expression of Markov bases
for this model of general tables is not yet obtained at present. 
Even by using 4ti2, it is difficult to compute a Markov basis for
contingency tables larger than $5 \times 5 \times 5$ tables within a
practical amount of time.  

This model has the higher Lawrence configuration in (\ref{eq:rth-Lawrence}) such
that $A$ is a configuration for the two-way complete independence
model. 
The set of basic moves of form
\[
 \begin{array}{|c|c|c|}
  \hline
   & i_1 & i^\prime_1 \\ \hline
 i_2  & 1 & -1 \\ \hline
 i^\prime_2  & -1 & 1 \\ \hline
 \end{array}
\]
is known to be a Markov basis for the two-way complete independence
model.
By using this fact and Proposition \ref{prop:rth-Lawrence}, we can
compute a lattice basis as a set of degree four moves,
\[
\begin{array}{ccc}
 i_3  &\quad & i^\prime_3\\
 \begin{array}{|c|c|c|}
  \hline
   & i_1 & i^\prime_1 \\ \hline
 i_2  & 1 & -1 \\ \hline
 i^\prime_2  & -1 & 1 \\ \hline
 \end{array} & \qquad &
 \begin{array}{|c|c|c|}
  \hline
   & i_1 & i^\prime_1 \\ \hline
 i_2  & -1 & 1 \\ \hline
 i^\prime_2  & 1 & -1 \\ \hline
 \end{array}\\
\end{array}.
\]
In this experiment we compute an exact distribution of 
the log-likelihood ratio (LR) statistic of %the test of 
the goodness-of-fit test for no-three-factor interaction model against
the three-way saturated model  
\[
\log p_{i_1 i_2 i_3} = \mu_{123}(i_1 i_2 i_3).
\]
We computed sampling distribution of the LR statistic for 
$I \times I \times I$, $I=3,5,10$ three-way contingency tables.
Then the degrees of freedom of the asymptotic $\chi^2$ distribution of
LR statistic  is $(I-1)^3$.  
We set the sample size as $5I^3$.
For $3 \times 3 \times 3$ tables, the number of burn-in samples and
iterations are $(\text{burn-in},\text{iteration})=(1000,10000)$.
In $3 \times 3 \times 3$ tables, a minimal Markov basis is known
\cite{aoki-takemura-2003anz} and we also compute a sampling 
distribution by a Markov basis. 
In other cases, we set 
$(\text{burn-in},\text{iteration})=(10000,100000)$.

Figure \ref{fig:no3factor1} presents the results for 
$3 \times 3 \times 3$ tables. 
Left, center and right figures are histograms, 
paths and correlograms of the LR statistic, respectively. 
Solid lines in the left figures are asymptotic $\chi^2$ distributions
with degrees of freedom $8$. 
$\alpha_k$ is generated from $Po(\lambda)$, $\lambda=1,10,50$.

We can see from the figures that the proposed methods show comparative
performance to the sampling with a Markov basis.
Although the sampling distribution and the path is somewhat unstable for
$\lambda=50$, in other cases the sampling distributions are similar and
the paths are stable after burn-in period.
Unless we set $\lambda$ as extremely high, the proposed method is robust
against the distribution of $\alpha_k$. 

Figure \ref{fig:no3factor2} presents the results for 
$5 \times 5 \times 5$ and $10 \times 10 \times 10$ tables.
In these cases Markov basis cannot be computed via 4ti2 within a
practical amount of time by an Intel Core 2 Duo 3.0 GHz CPU machine. 
So we compute sampling distributions by the proposed method.
For $5 \times 5 \times 5$ tables, $\alpha_1,\ldots,\alpha_K$ are
generated from $Geom(p)$, $p=0.1,0.5$.
The degrees of freedom of the asymptotic $\chi^2$ distribution is $64$. 
Also in this case we can see that the proposed methods perform well.
The approximation of the sampling distributions to the asymptotic
$\chi^2$ distribution is good and the paths are stable after burn-in
period. 

For $10 \times 10 \times 10$ tables, $\alpha_1,\ldots,\alpha_K$ are
generated from $Po(\lambda)$, $\lambda=10,50$.
The degrees of freedom of the asymptotic $\chi^2$ distribution is $729$. 
In this case the performances of the proposed methods look less stable.
We also compute the cases where the sample sizes are $10I^3$ and
$100I^3$ but the results are similar. 
This is considered to be because the size of fibers of 
$10 \times 10 \times 10$ tables is far larger than those of 
$3 \times 3 \times 3$ or $5 \times 5 \times 5$ tables and it is more
difficult to move around all over a fiber.
Even if we use a Markov basis, the result might not be improved.
Increasing the number of iterations might lead to a better performance. 

Comparing the paths with $\lambda=10$, the path with $\lambda=50$ looks
relatively more stable. 
For larger tables, larger $\lambda$ might be preferable to move around a
fiber.

\begin{figure}[htbp]
\centering
 \includegraphics[scale=0.20]{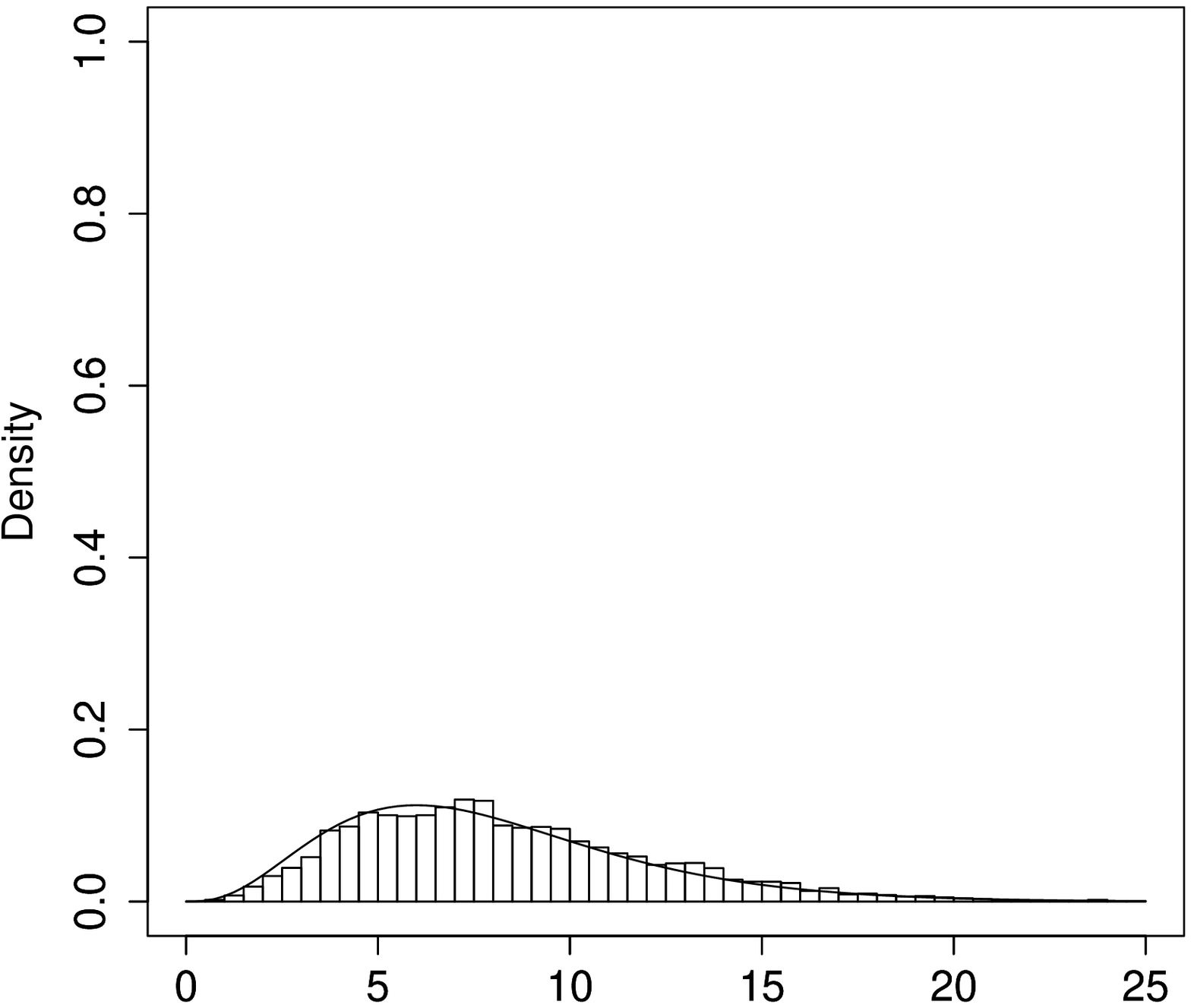}
 \includegraphics[scale=0.20]{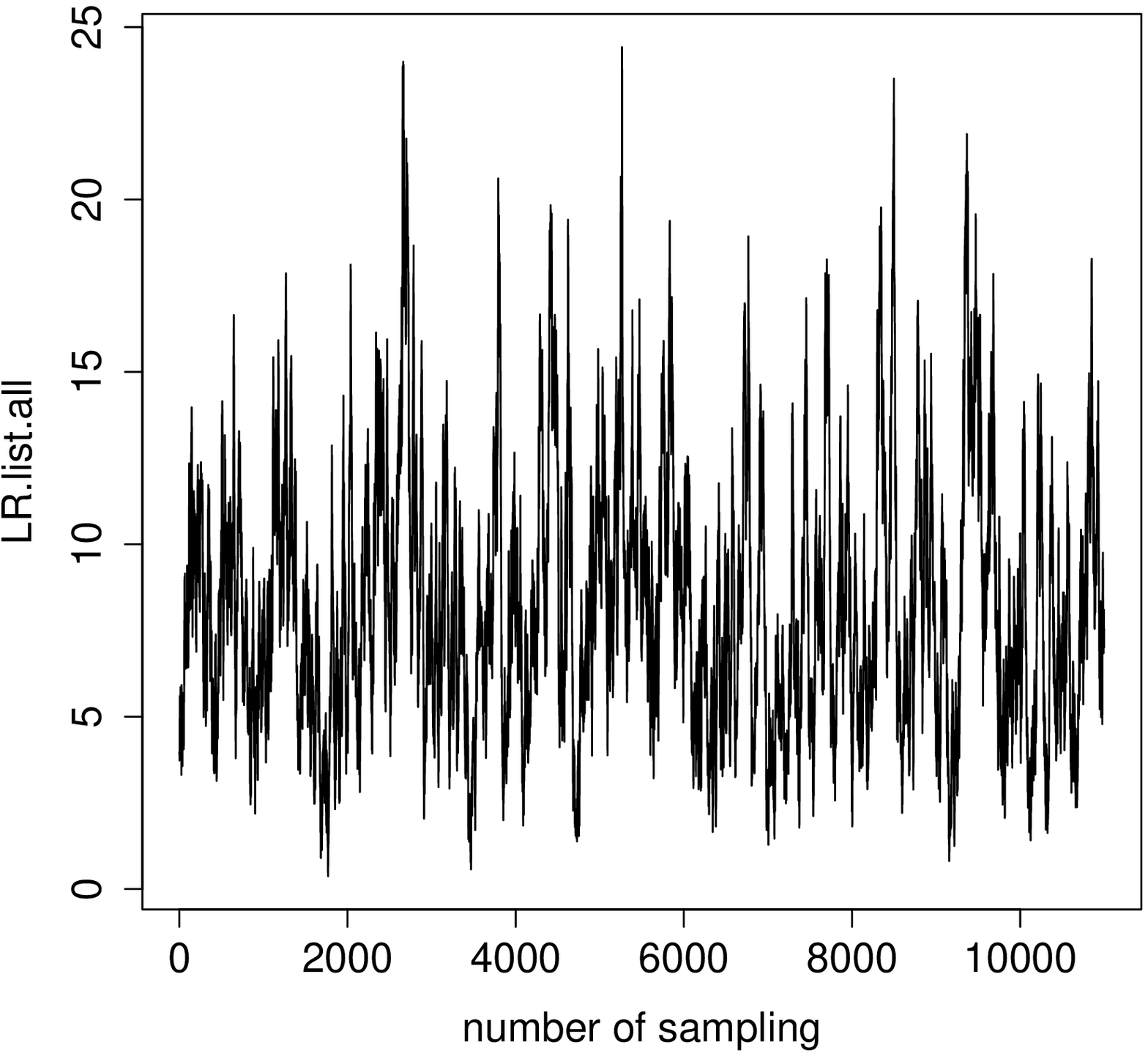}
 \includegraphics[scale=0.20]{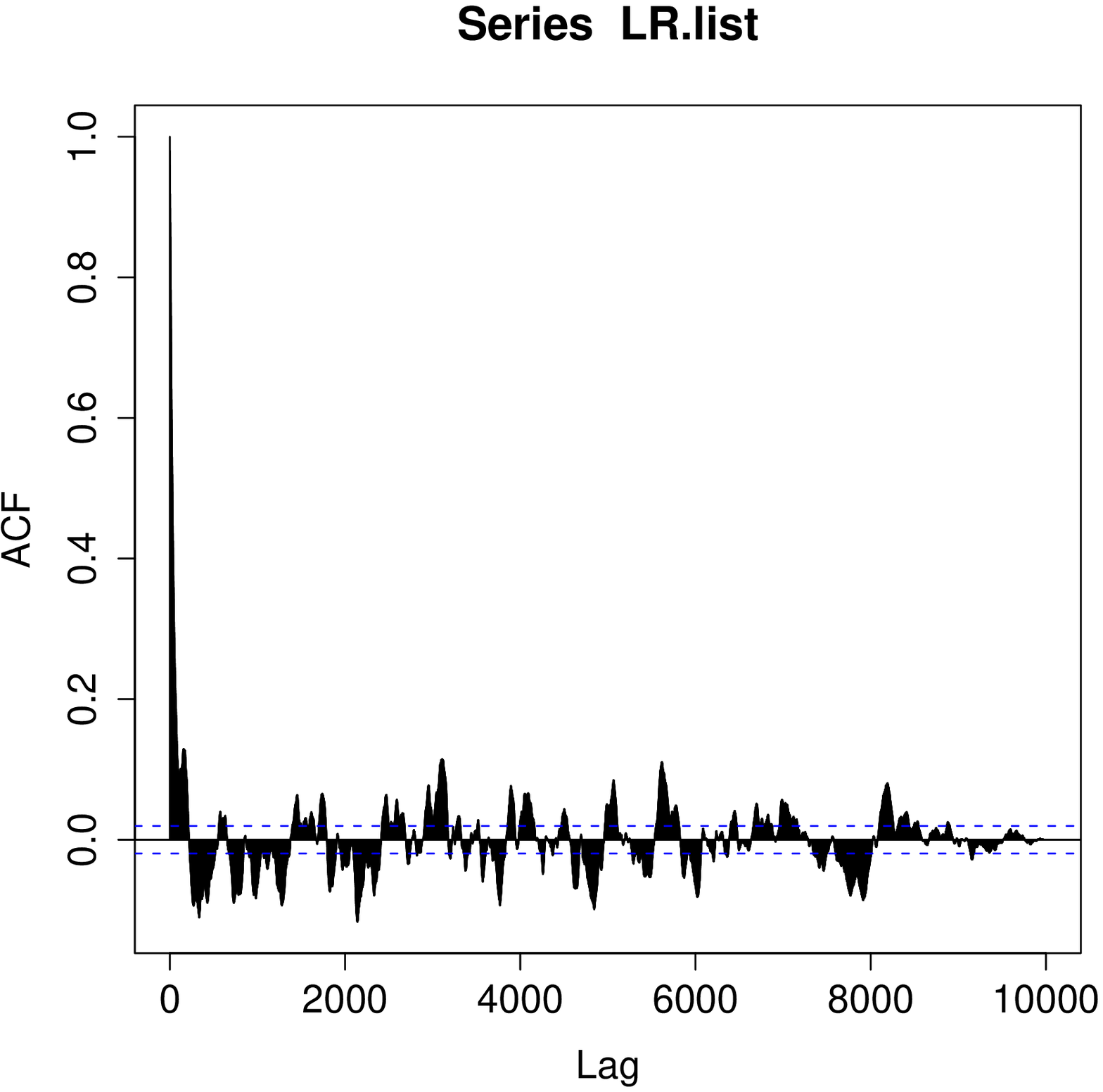}\\
 (a) a Markov basis\\
 \noindent
 \includegraphics[scale=0.20]{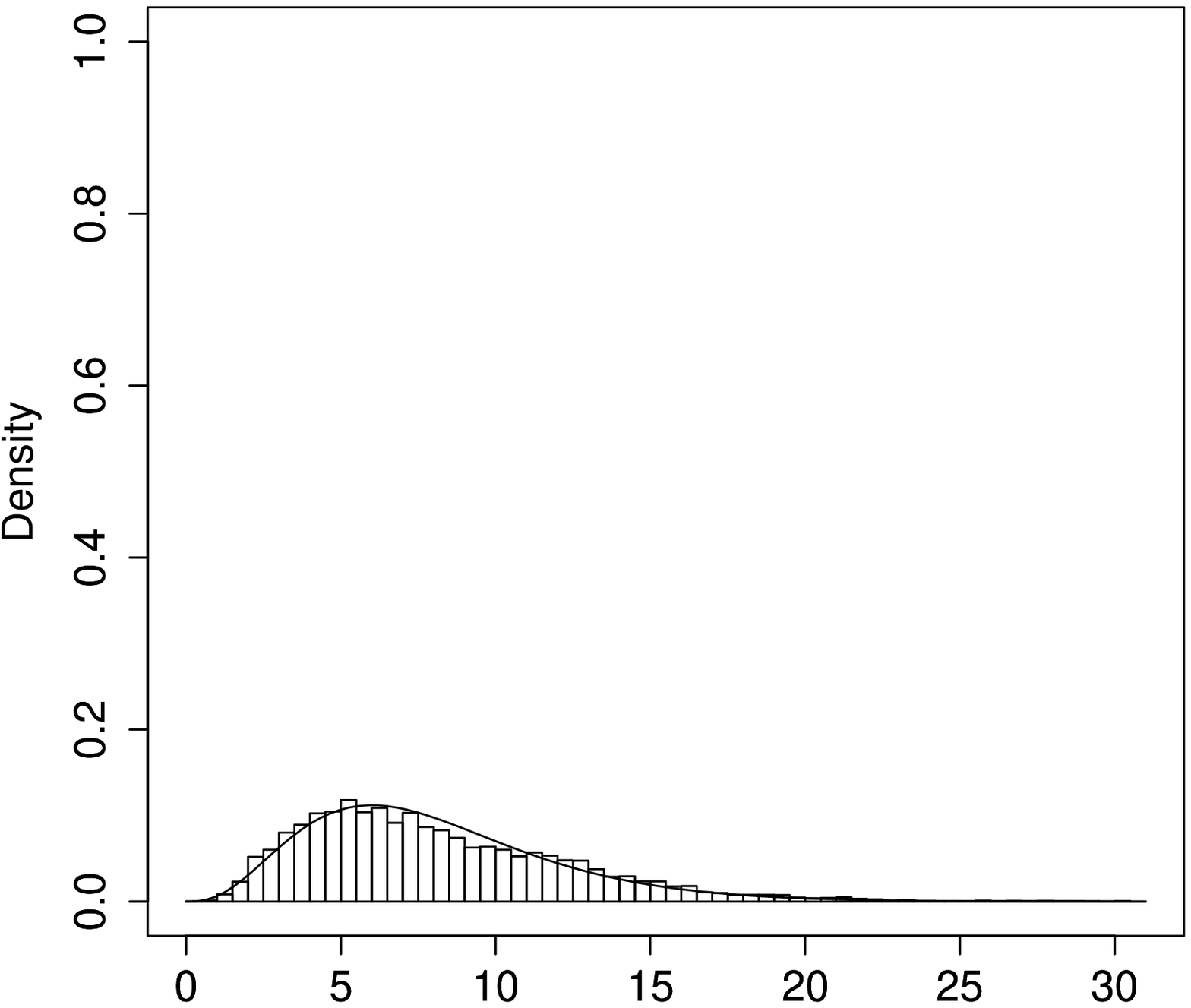}
 \includegraphics[scale=0.20]{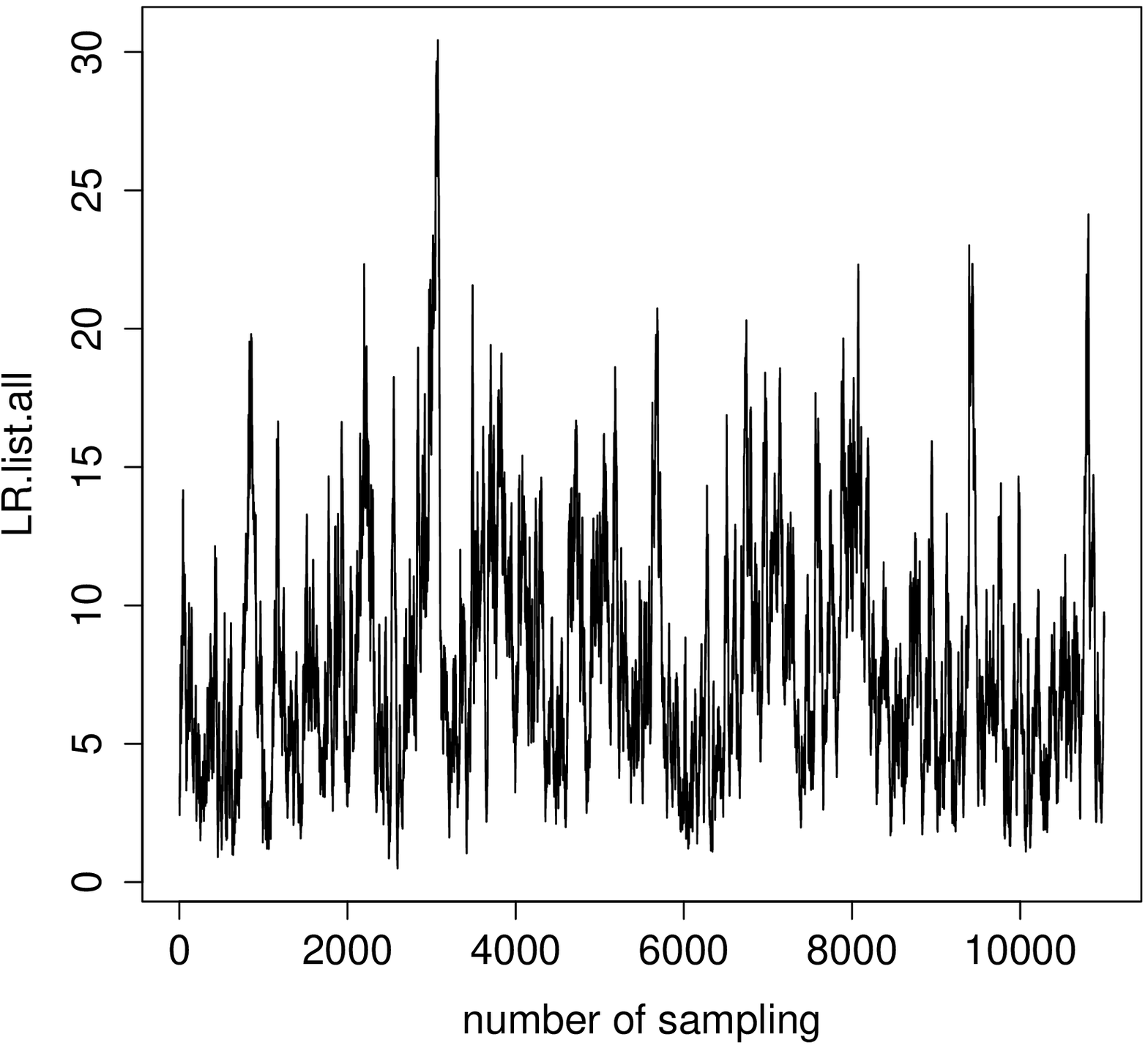}
 \includegraphics[scale=0.20]{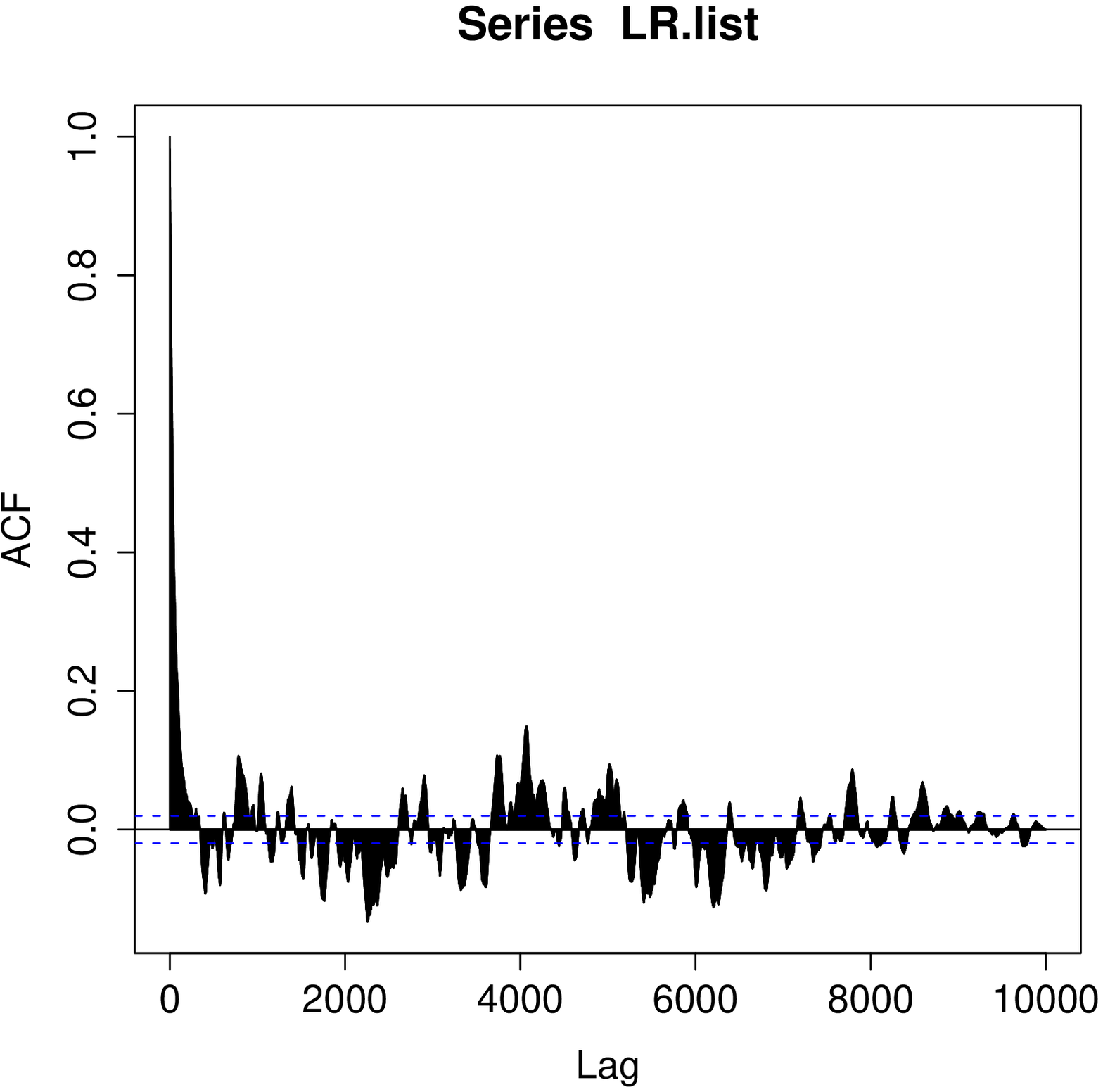}\\
 (b) a lattice basis with $Po(1)$\\
 \noindent
 \includegraphics[scale=0.20]{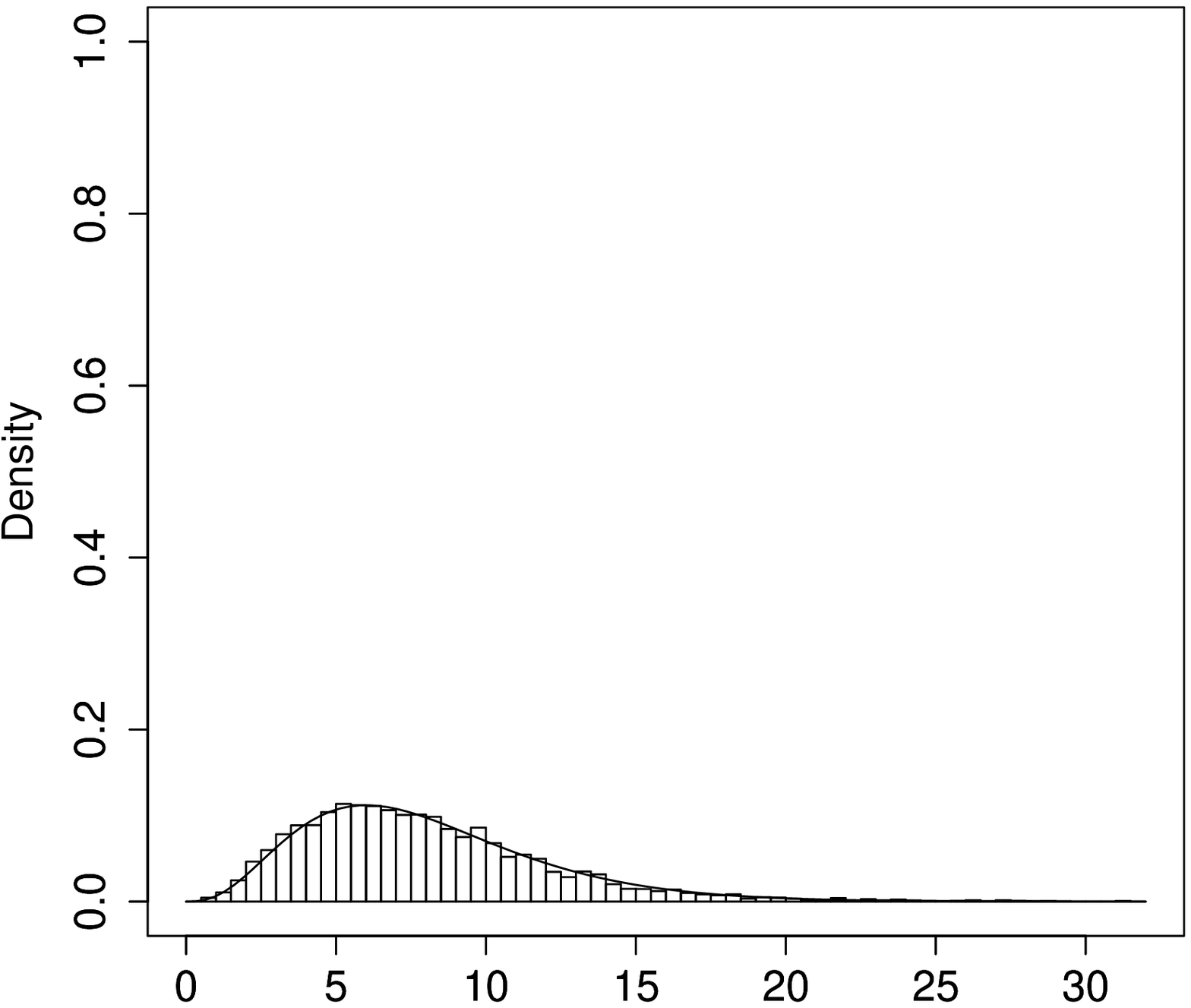}
 \includegraphics[scale=0.20]{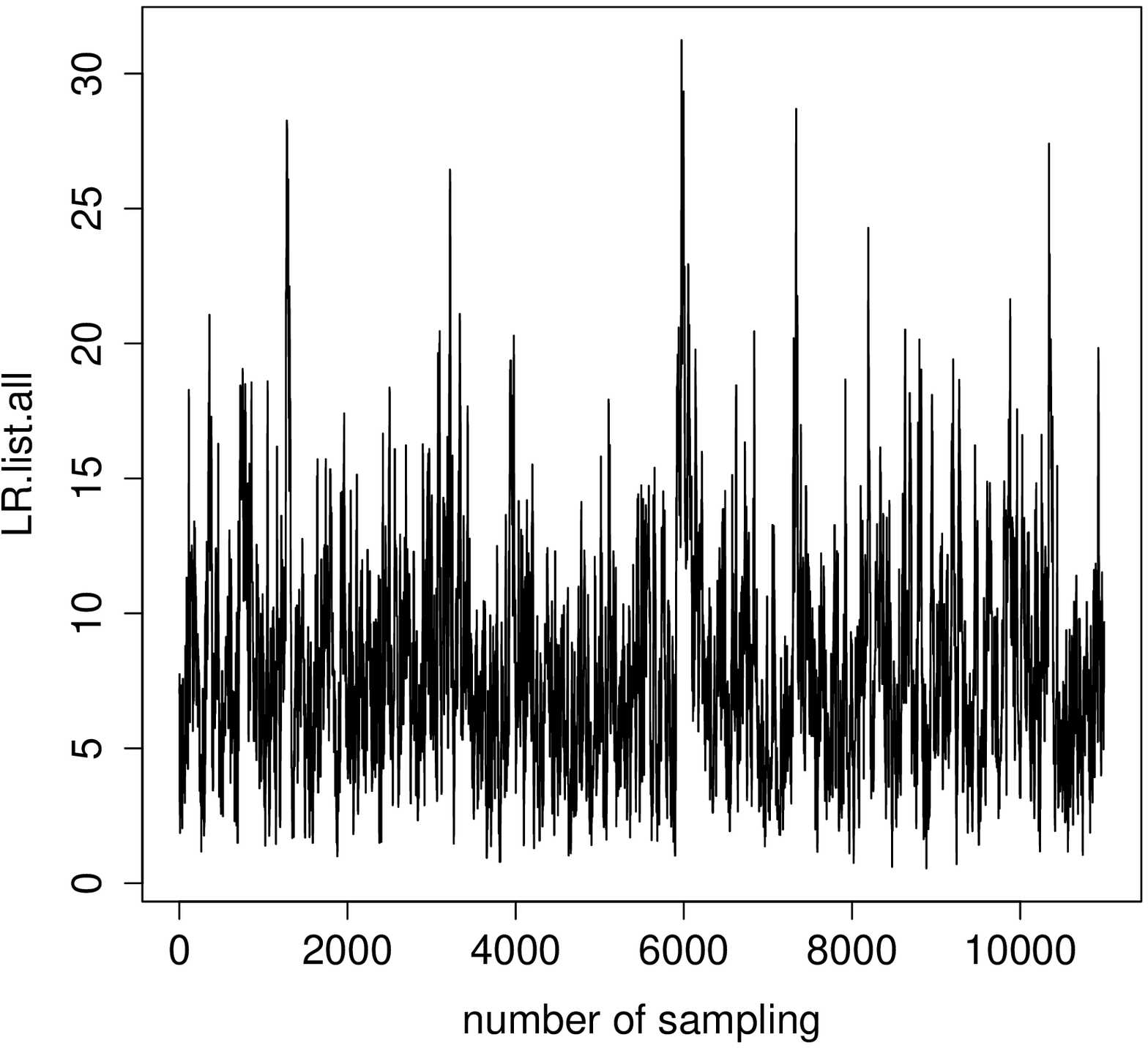}
 \includegraphics[scale=0.20]{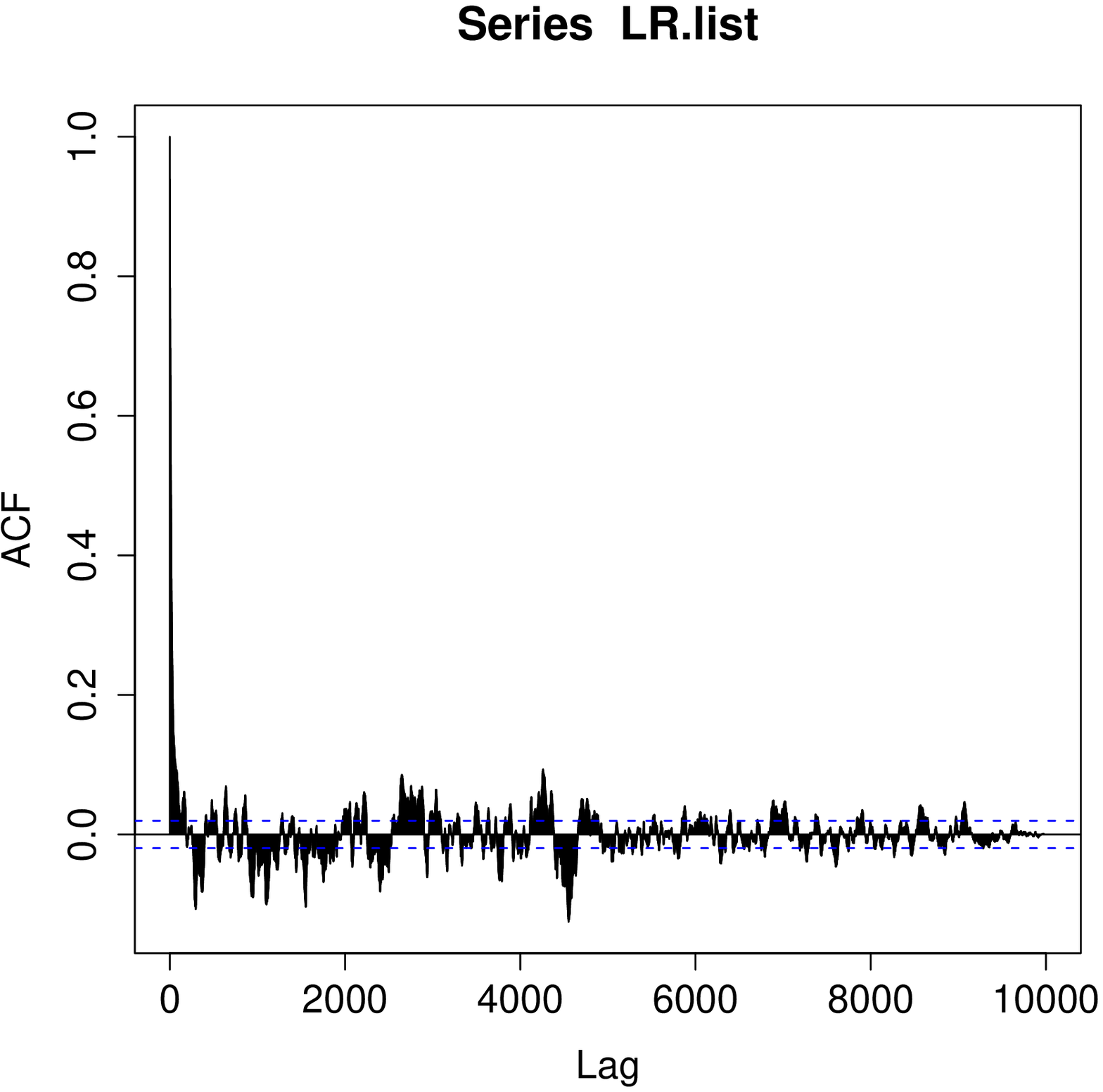}\\
 (c) a lattice basis with $Po(10)$\\
 \noindent
 \includegraphics[scale=0.20]{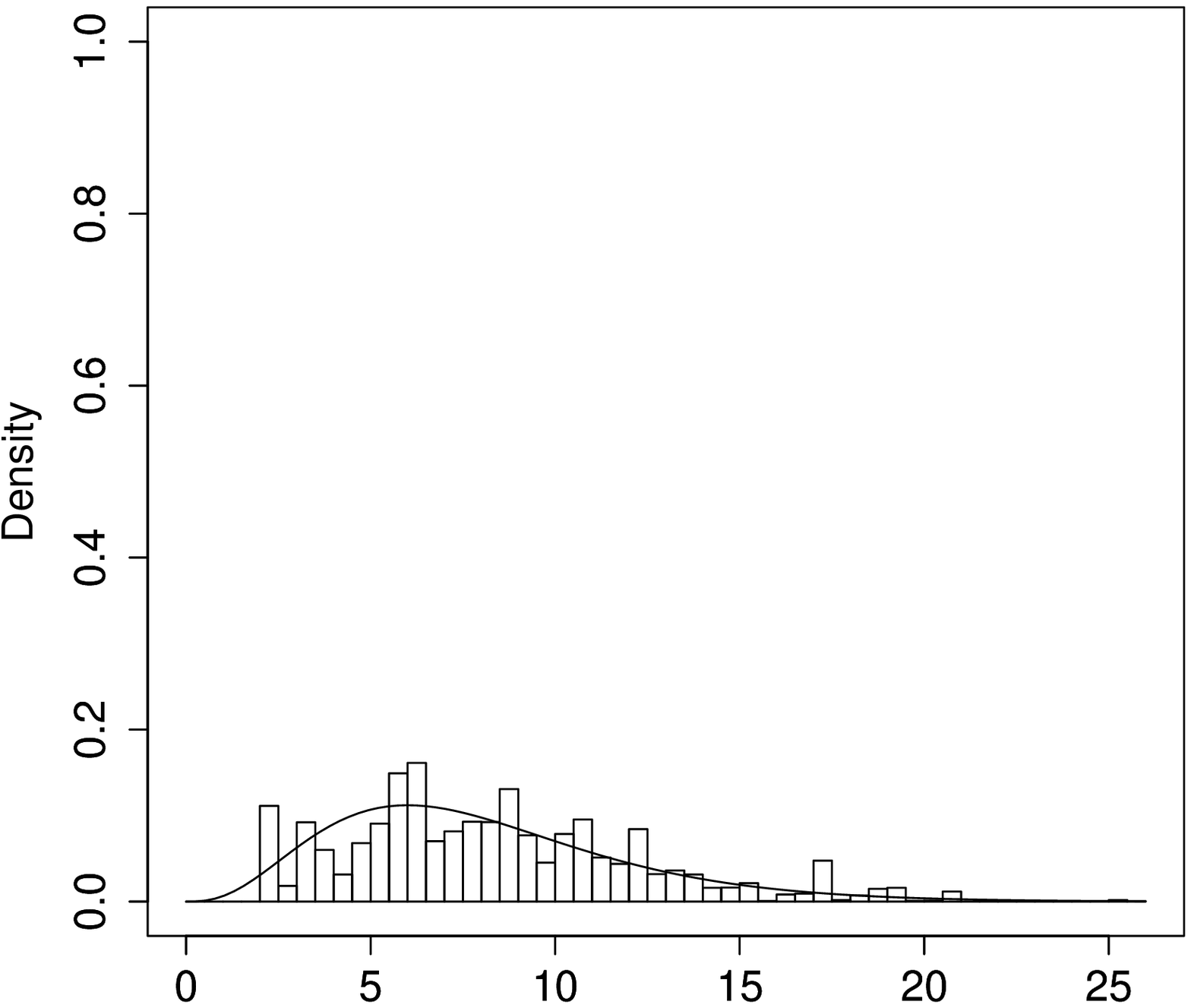}
 \includegraphics[scale=0.20]{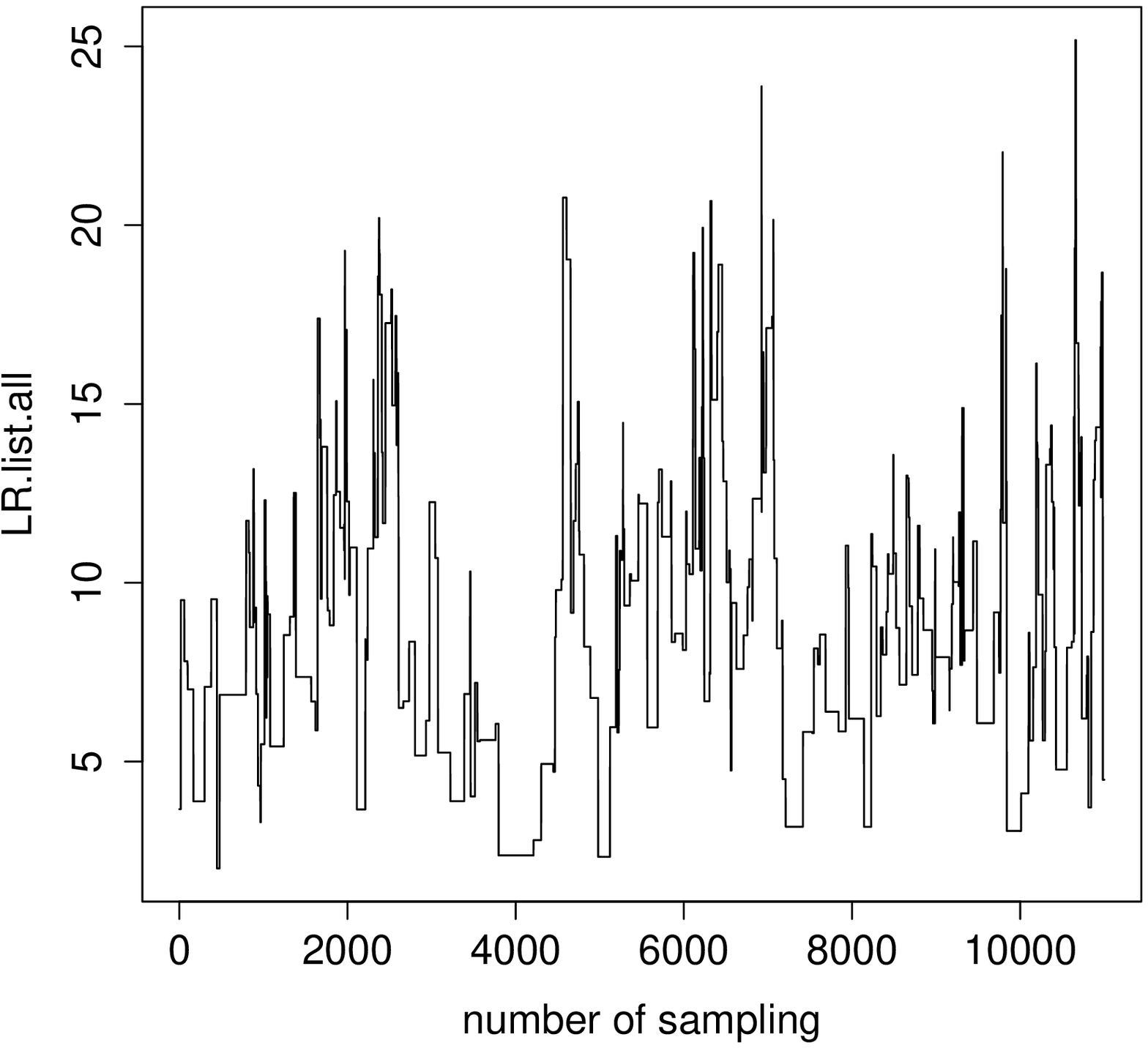}
 \includegraphics[scale=0.20]{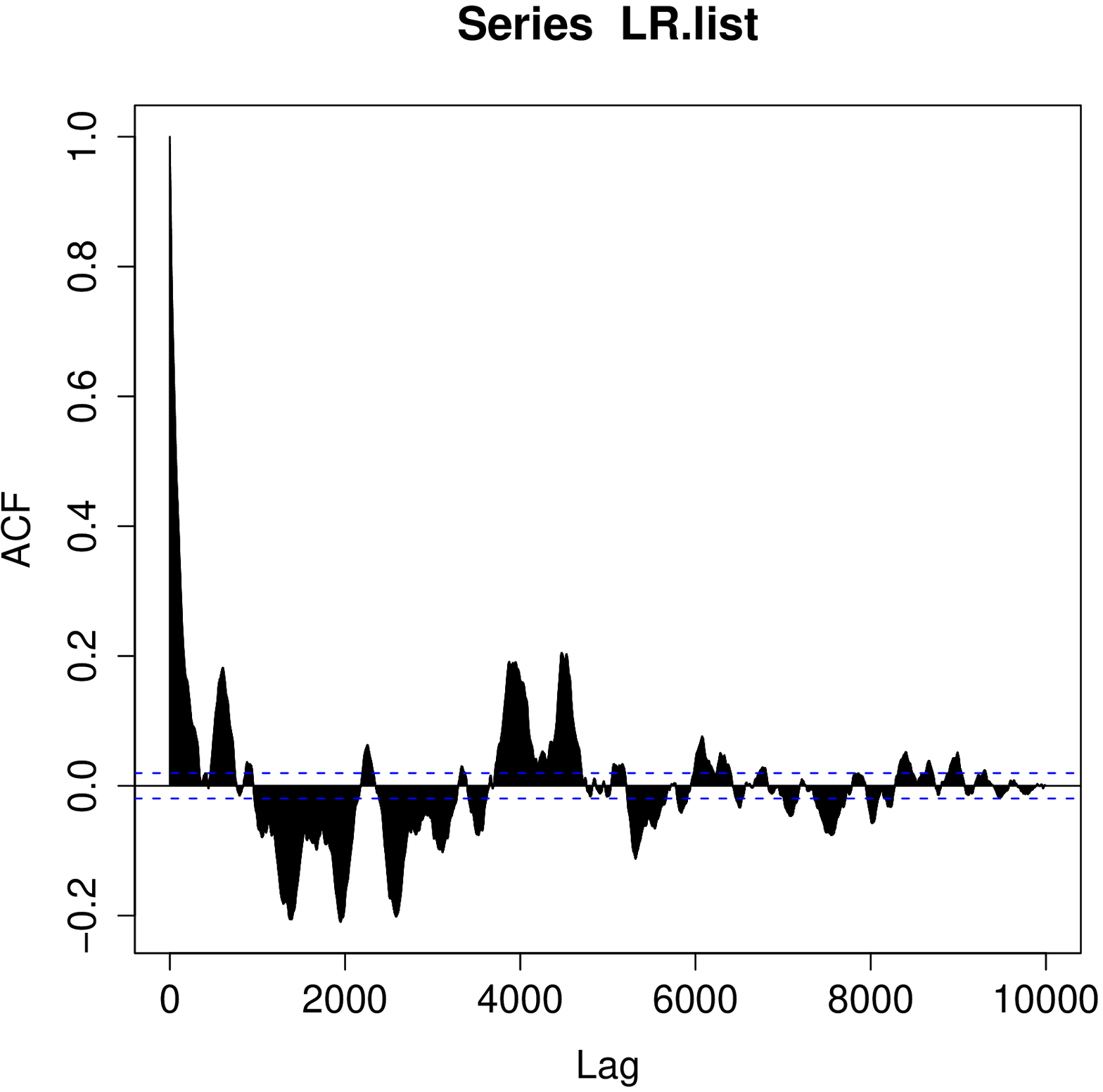}\\
 (d) a lattice basis with $Po(50)$\\
 \caption{Histograms, paths of LR statistic and correlograms for $3
 \times 3 \times 3$ no-three-factor interaction model 
 ((burn in,iteration) $= (1000,10000)$)}
\label{fig:no3factor1}
\end{figure}

\begin{figure}[htbp]
 \centering
 \noindent
 \includegraphics[scale=0.20]{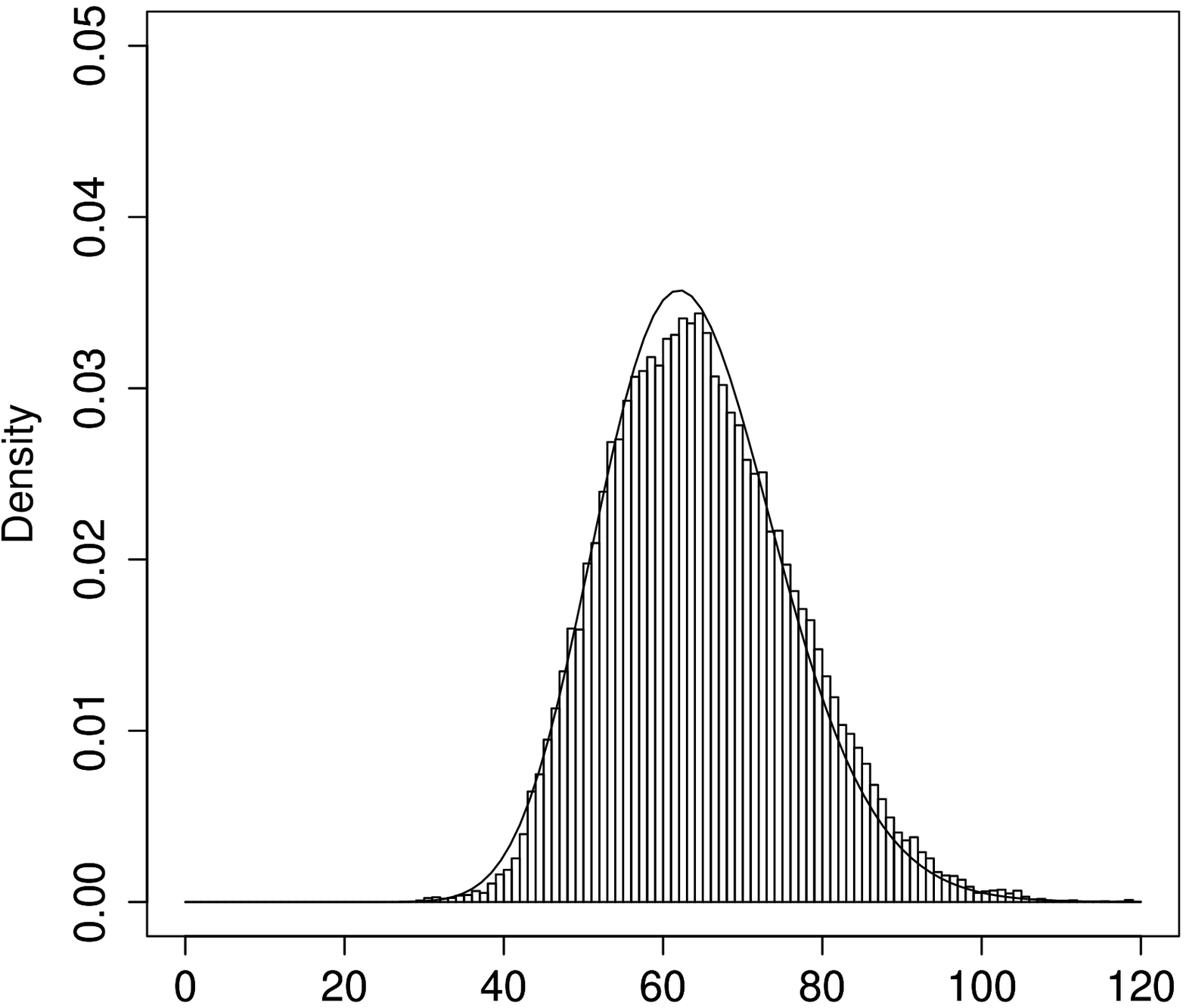}
 \includegraphics[scale=0.20]{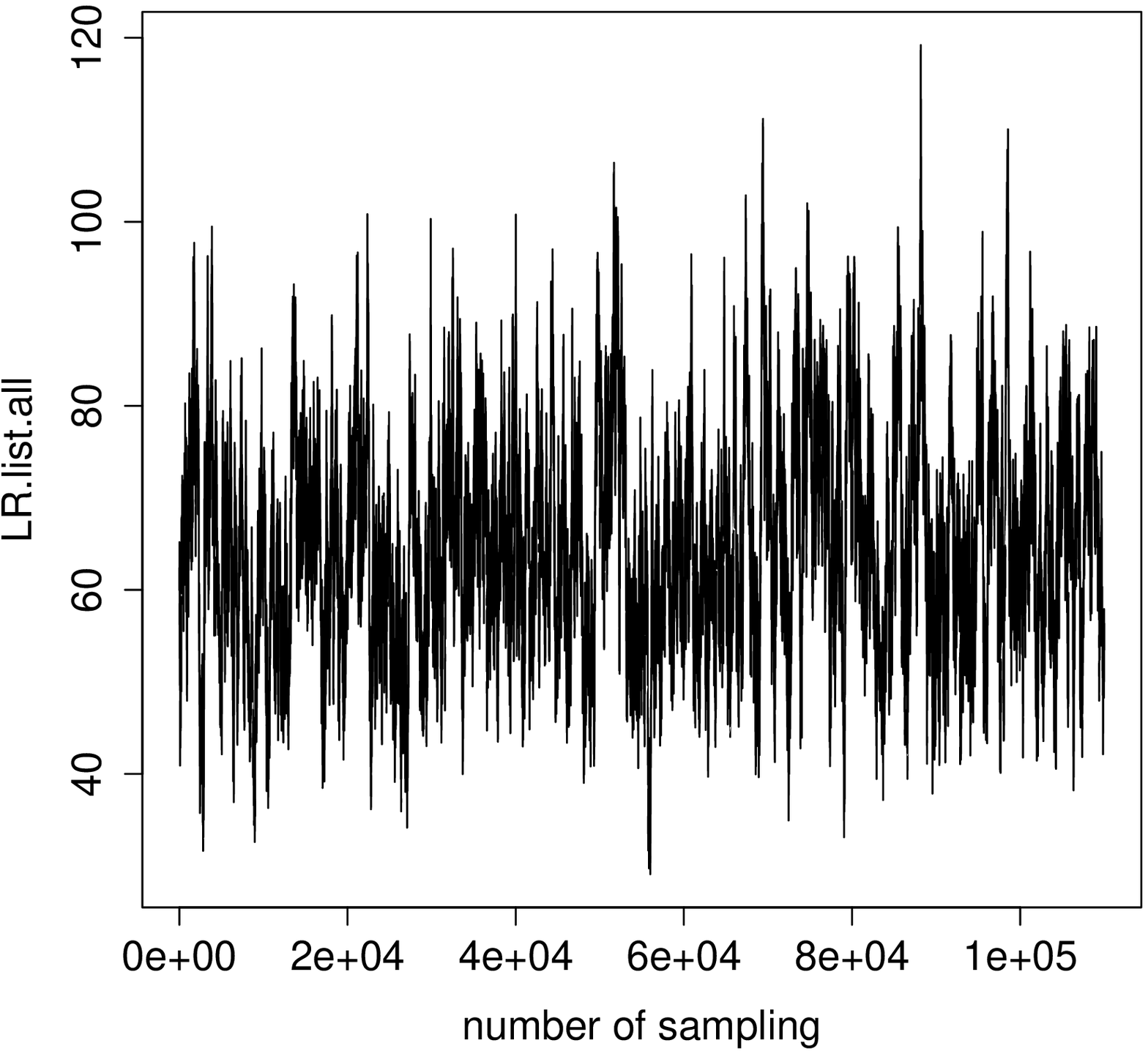}
 \includegraphics[scale=0.20]{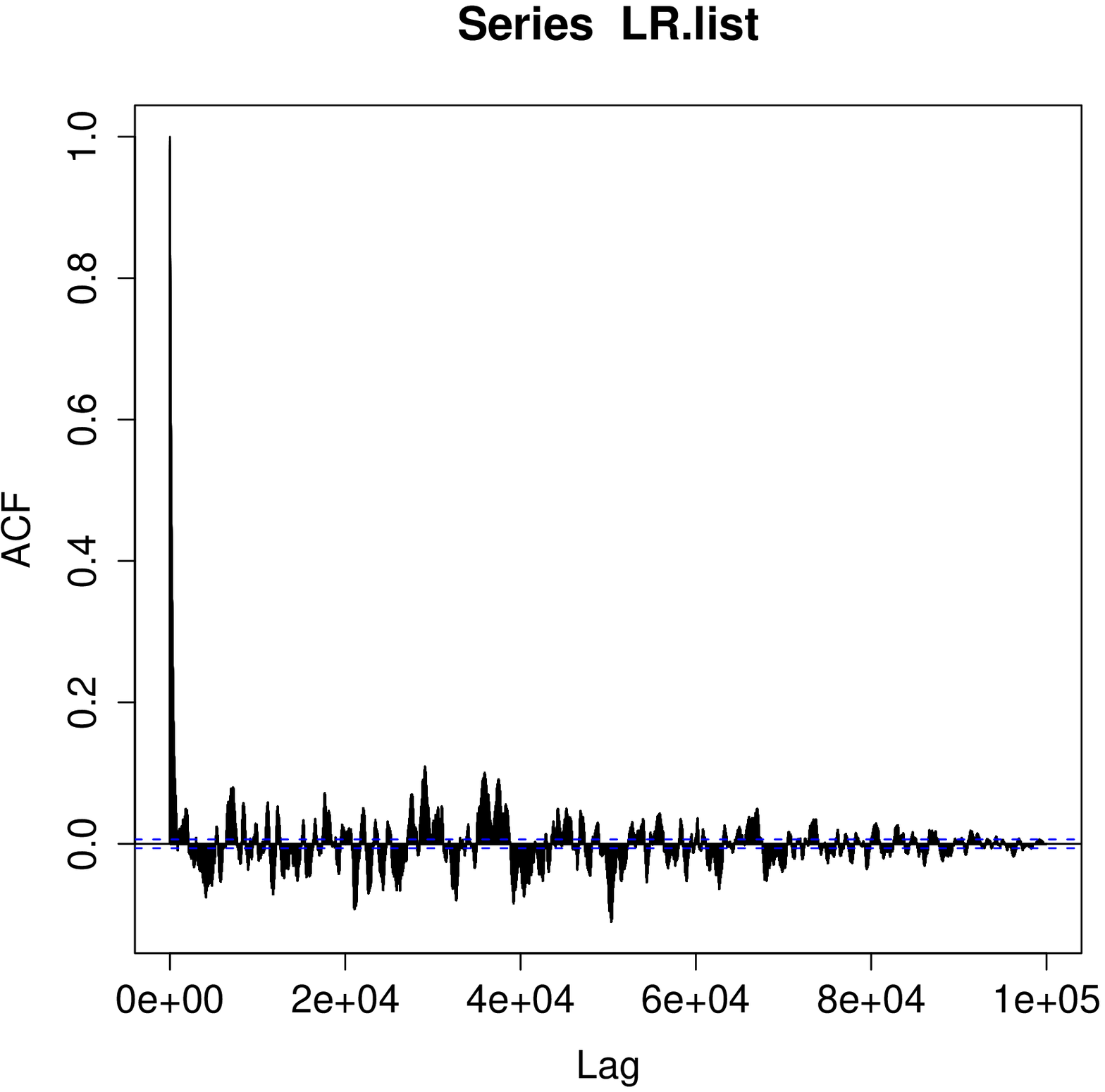}\\
 (a) $5\times 5 \times 5$, a lattice basis with $Geom(0.1)$\\
 \noindent
 \includegraphics[scale=0.20]{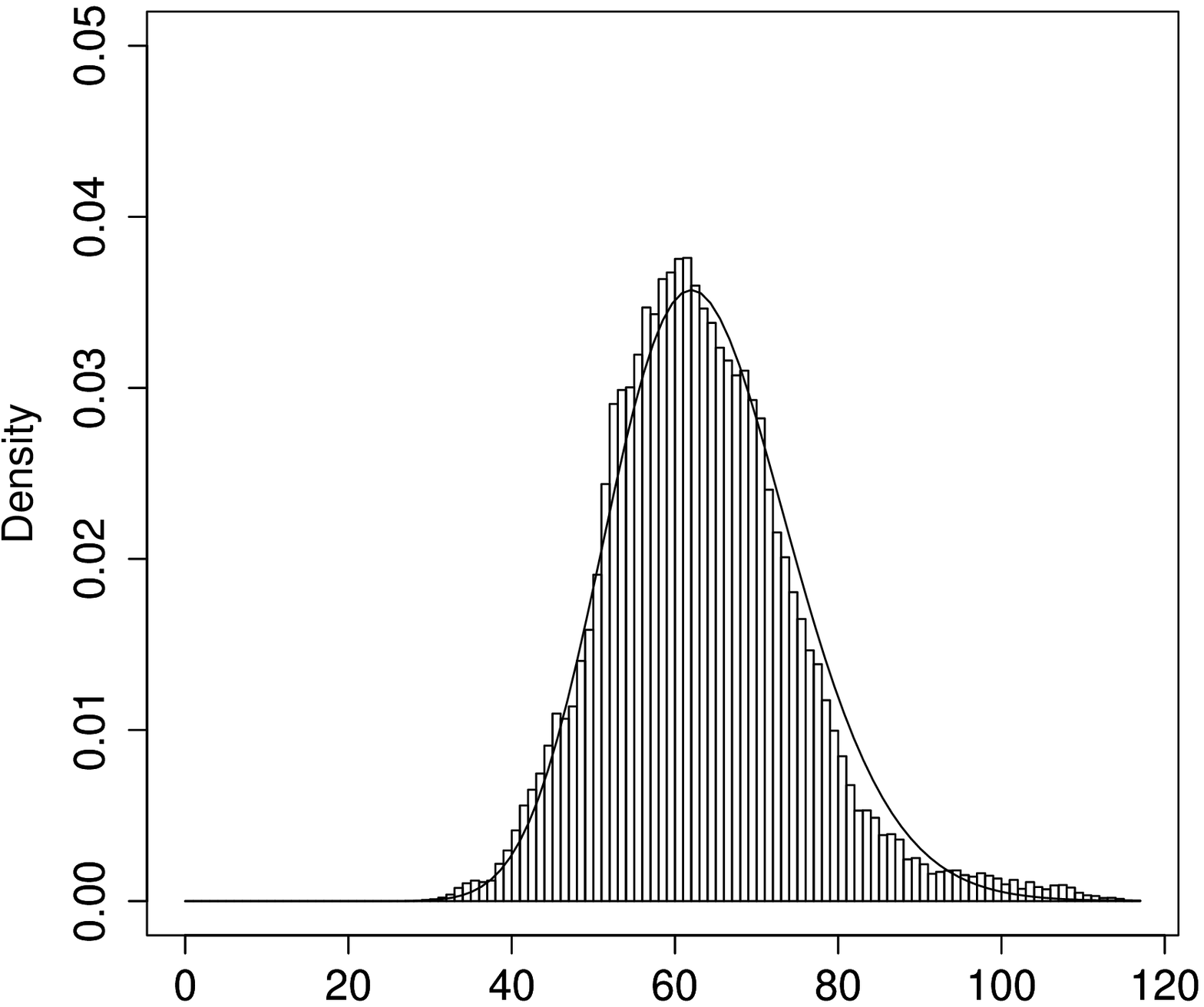}
 \includegraphics[scale=0.20]{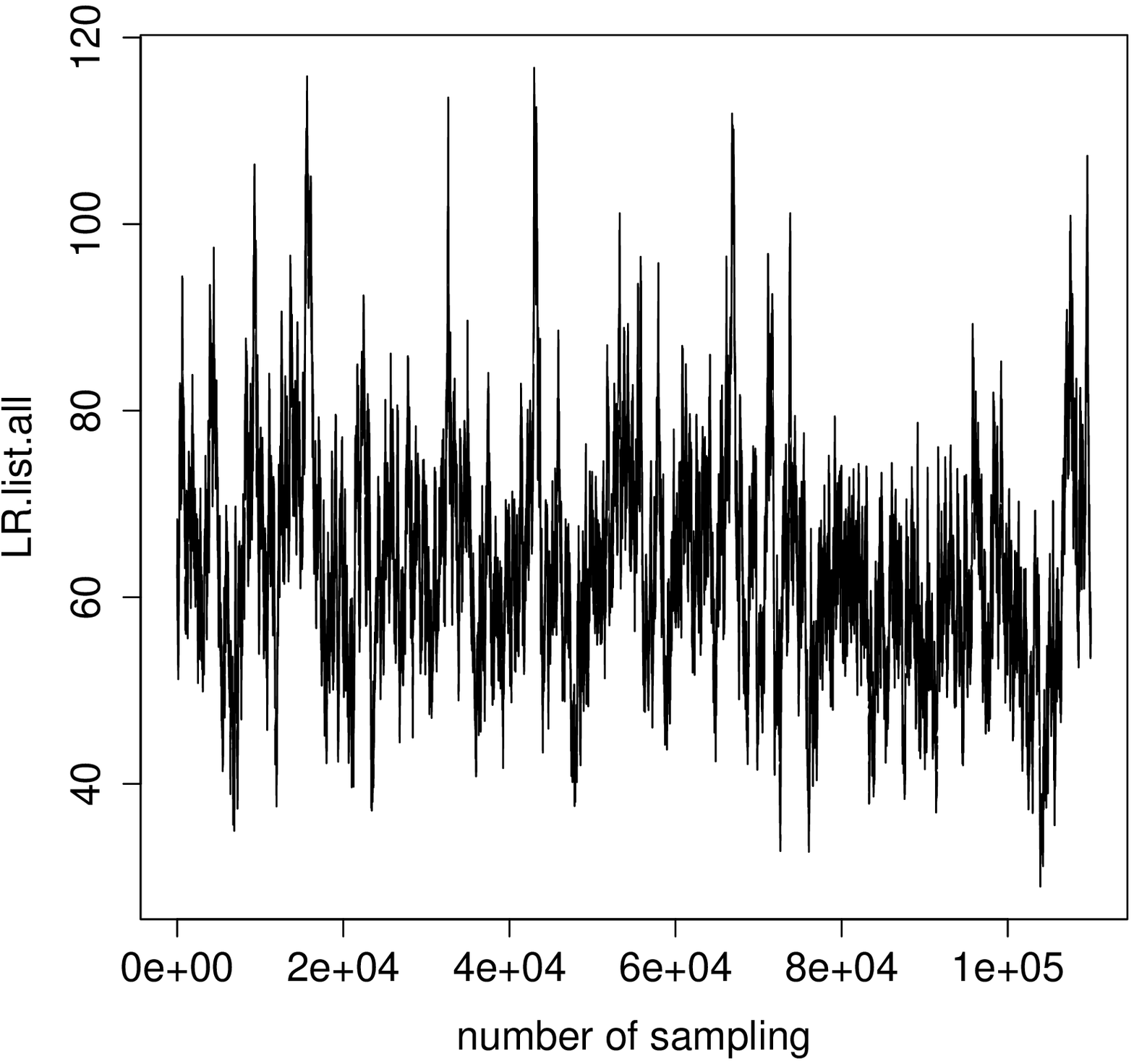}
 \includegraphics[scale=0.20]{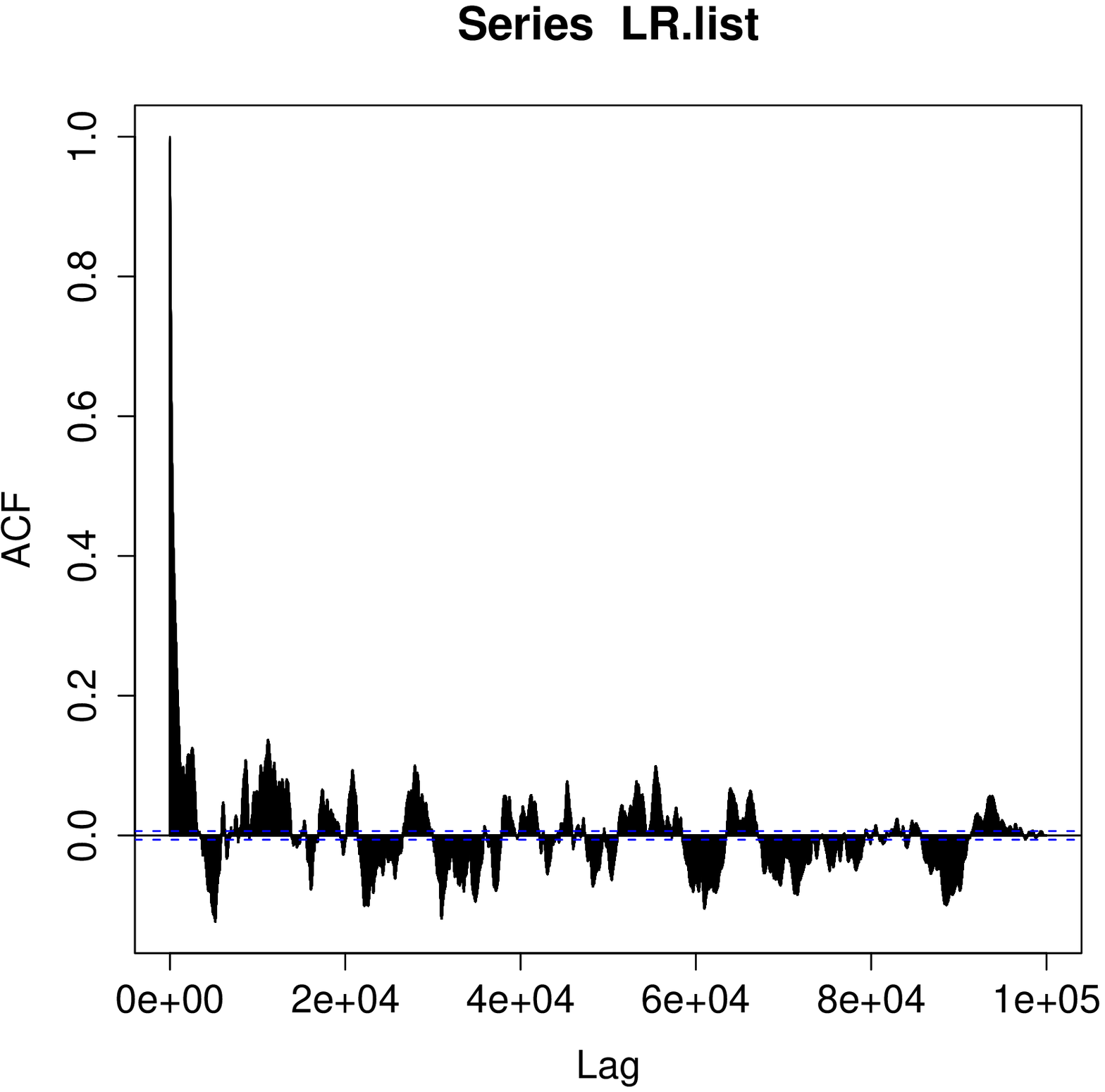}\\
 (b) $5\times 5 \times 5$, a lattice basis with $Geom(0.5)$\\
 \noindent
 \includegraphics[scale=0.20]{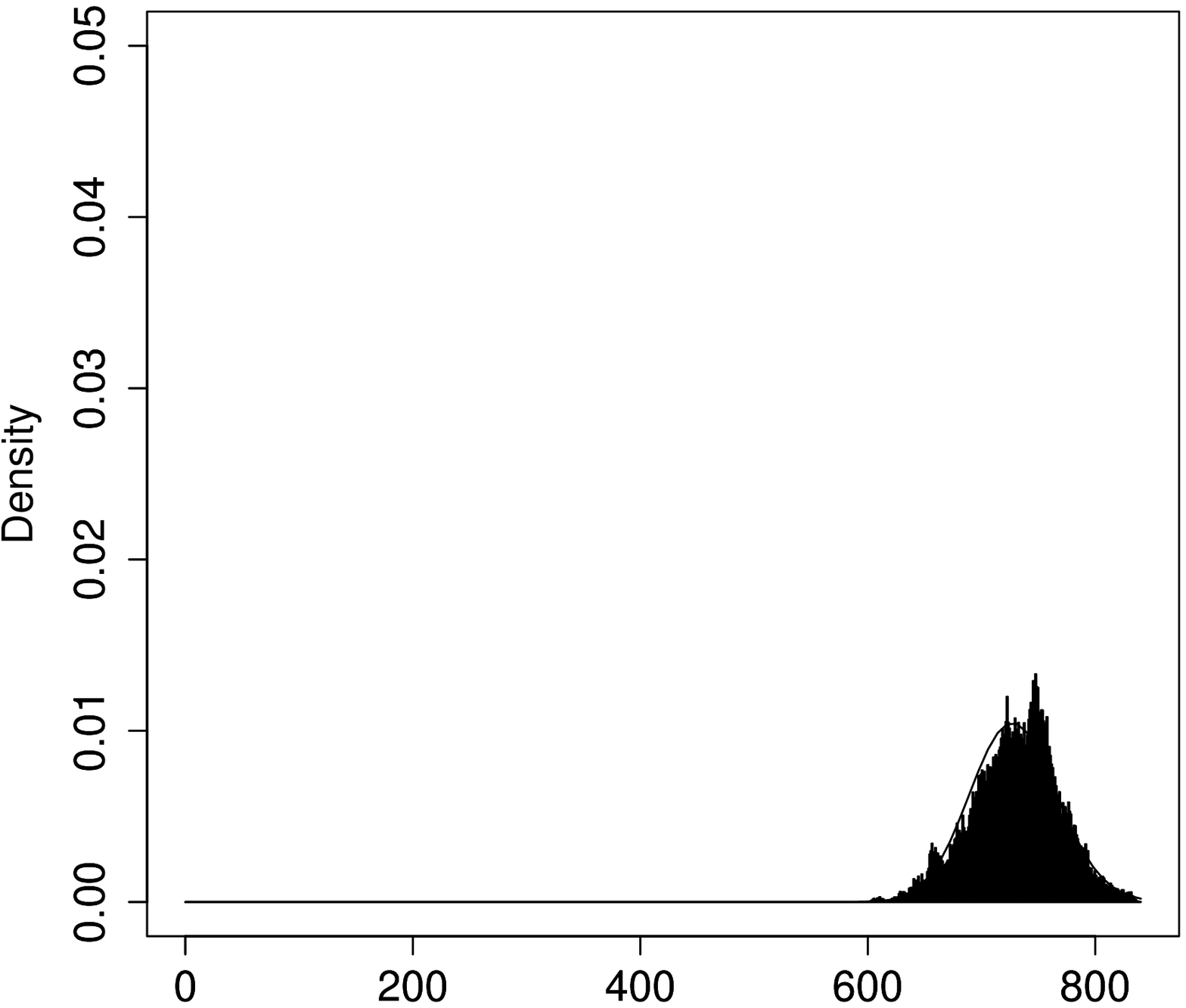}
 \includegraphics[scale=0.20]{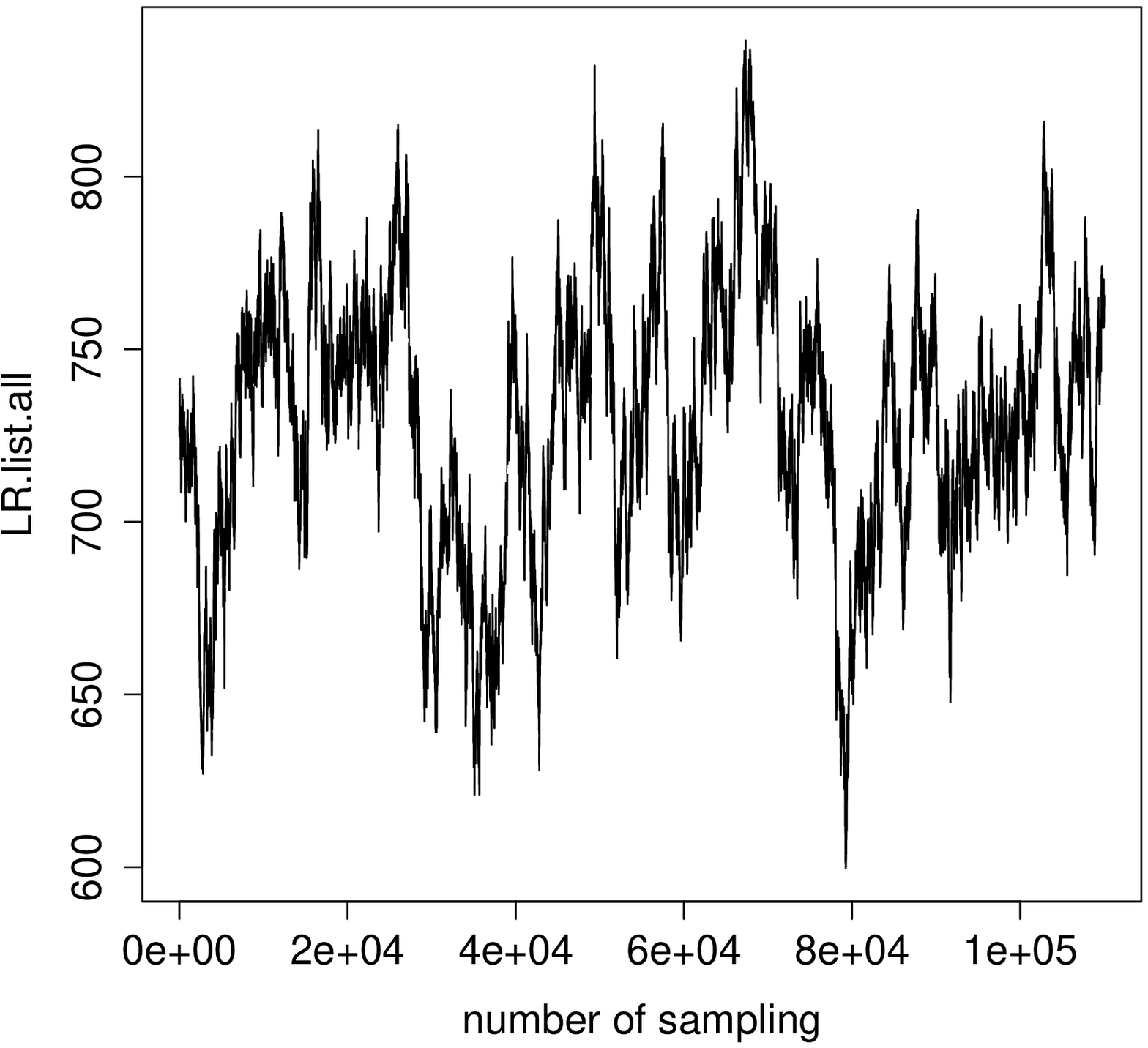}
 \includegraphics[scale=0.20]{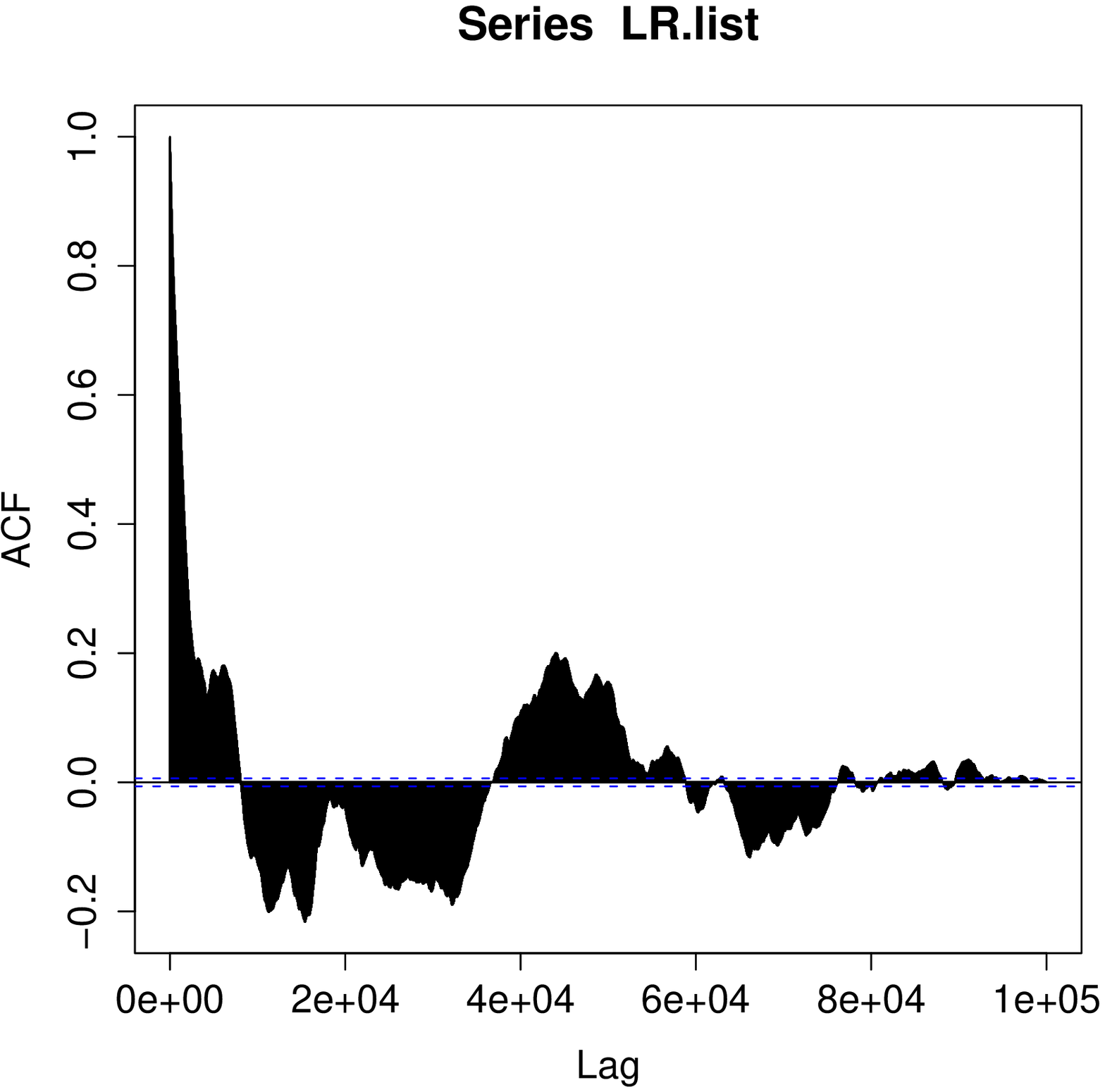}\\
 (c) $10\times 10 \times 10$, a lattice basis with $Po(10)$\\
 \noindent
 \includegraphics[scale=0.20]{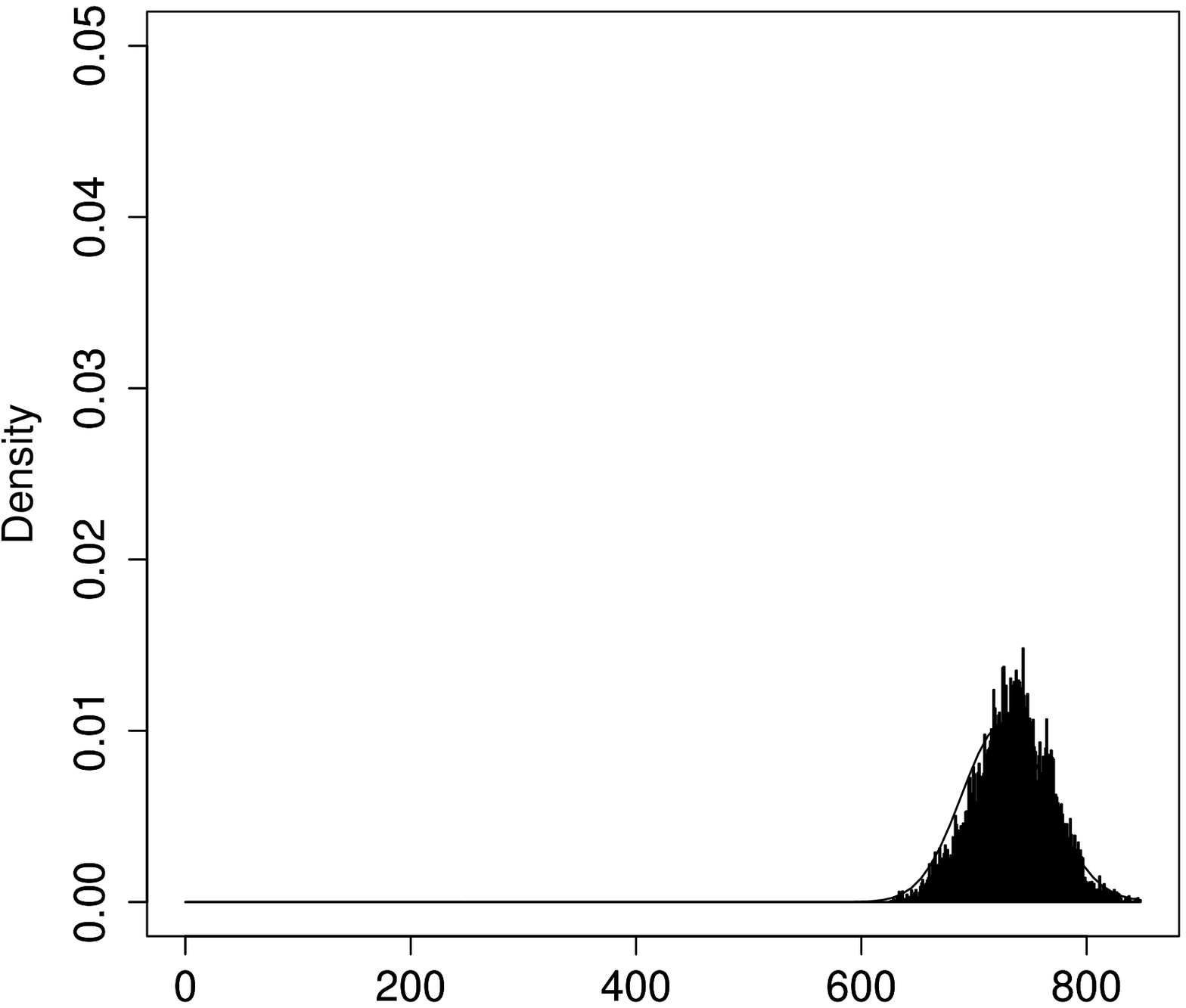}
 \includegraphics[scale=0.20]{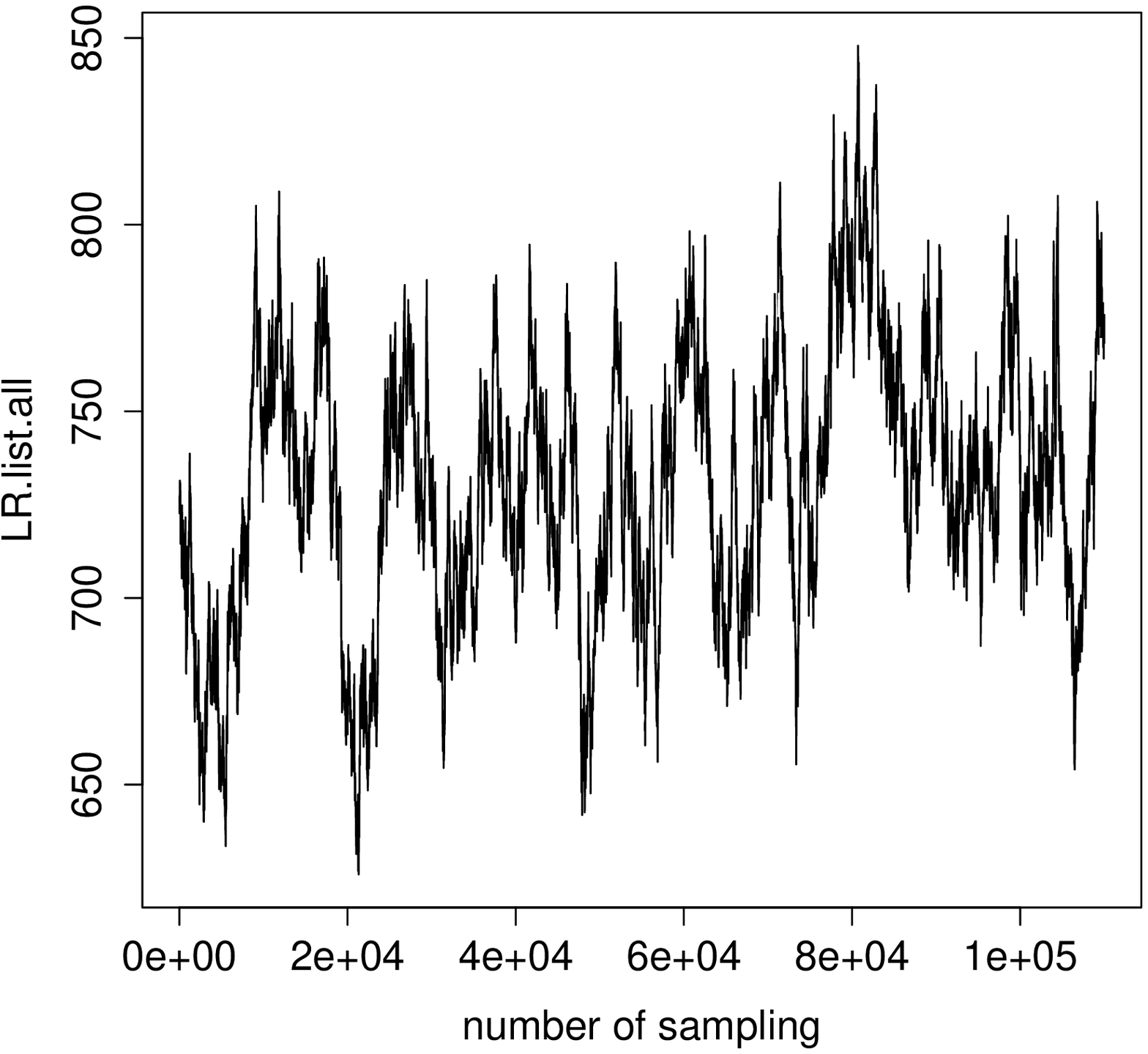}
 \includegraphics[scale=0.20]{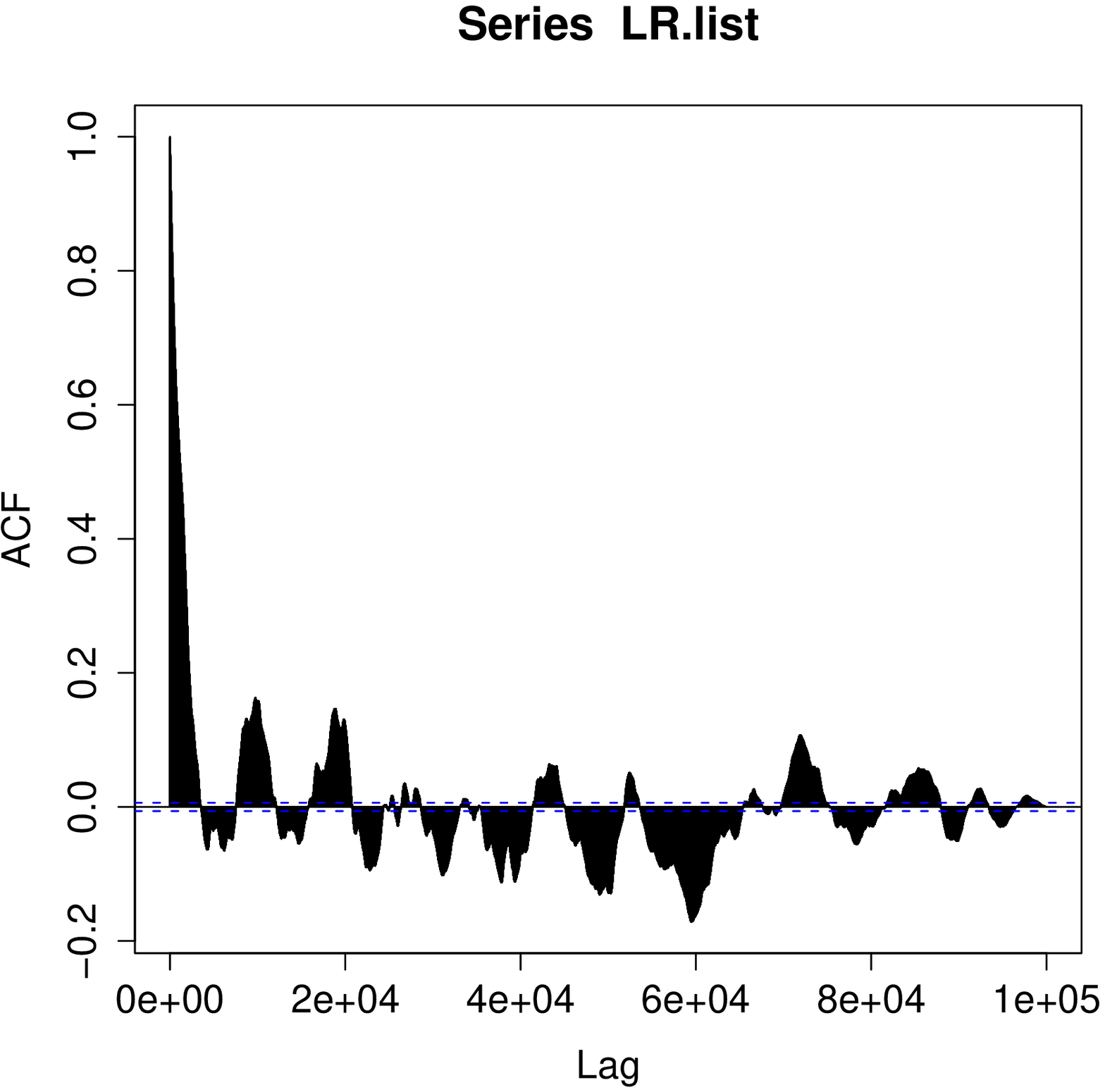}\\
 (d) $10\times 10 \times 10$, a lattice basis with $Po(50)$\\
 \caption{Histograms, paths of LR statistic and correlograms of paths
 for no-three-factor interaction model ((burn in,iteration) $=
 (10000,100000)$)} 
\label{fig:no3factor2}
\end{figure}

\subsection{Discrete logistic regression model}
The logistic regression model with discrete covariates is 
considered as a model for contingency tables.  The model
is defined by the conditional probability for the response variable.
The model with one covariate and the model with two covariates are
described as 
\[
p_{i_1\mid i_2}	=
\left\{
\begin{array}{ll}
 \displaystyle{
 \frac{\exp (\mu_{i_1} + \alpha_{i_1} i_2)}
  {1+ \sum_{i'_1=1}^{I_1-1} \exp (\mu_{i'_1} + \alpha_{i'_1} i_2)},} & \quad 
  i_1 = 1,\ldots,I_1-1,\\
 \displaystyle{
 \frac{1}
  {1+ \sum_{i'_1=1}^{I_1-1} \exp (\mu + \alpha_{i'_1} i_2)},} & \quad 
  i_1 = I_1,
\end{array}
\right.
\]
where $i_2 \in \cI_2$ and 
\[
p_{i_1\mid i_2i_3}	=
\left\{
\begin{array}{ll}
 \displaystyle{
 \frac{\exp (\mu_{i_1} + \alpha_{i_1} i_2 + \beta_{i_1} i_3)}
  {1+ \sum_{i'_1=1}^{I_1-1} 
  \exp (\mu_{i'_1} + \alpha_{i'_1} i_2 + \beta_{i'_1} i_3)},} 
  & \quad i_1 = 1,\ldots,I_1-1,\\
 \displaystyle{
 \frac{1}
  {1+ \sum_{i'_1=1}^{I_1-1} \exp (\mu_{i'_1} + \alpha_{i'_1} i_2
   + \beta_{i'_1} i_3)},} & \quad 
  i_1 = I_1,
\end{array}
\right.
\]
where $(i_2,i_3) \in \cI_2 \times \cI_3$, 
% \[
% p_{i_1 i_2 i_3}	=
% \frac{\exp (\mu + \alpha_{i_1} i_2 + \beta_{i_1} i_3)}
% {1+ \sum_{i'_1=1}^{I_1} \exp (\mu + \alpha_{i'_1} i_2 + \beta_{i'_1} i_3)}, 
% \]
% \[
%  i_1 = 1,\ldots,I_1,  \quad (i_2,i_3) \in \cI_2 \times \cI_3
% \]
respectively. 
$p_{i_1 \mid i_2}$ and $p_{i_1 \mid i_2 i_3}$ are conditional  probabilities that 
the value of the response variable equals $i_1$ given the covariates $i_2$ and $(i_2,i_3)$,
respectively. 
$\cI_2$ and $\cI_2 \times \cI_3$ are designs for covariates.
The structure of Markov bases for discrete logistic regression model is
also known to be complicated even for the case of binary responses
$I_1=2$ \cite{chen-dinwoodie-dobra-huber2005,hara-takemura-yoshida-logistic}. 
Chen et al. \cite{chen-dinwoodie-dobra-huber2005} and 
Hara et al. \cite{hara-takemura-yoshida-logistic} discussed the model
with one covariate which is discrete and equally spaced and showed that
the set of degree four moves of form 
\[
 \begin{array}{c|rrrr|}
  \multicolumn{1}{c}{} & \multicolumn{1}{c}{i_2} & i_2 + k & i^\prime_2
   - k & \multicolumn{1}{c}{i^\prime_2}\\ \cline{2-5}
 i_1 &  1 & -1 & -1 & 1\\ 
 i^\prime_1 & -1 & 1 & 1 & -1\\ \cline{2-5}
 \end{array}
\]
connects all fibers.
Hara et al\cite{hara-takemura-yoshida-logistic} generalized the argument
to the model with two covariates both of which are equally spaced. 
However it seems to be difficult to generalize these arguments to
the models with more than two covariates or with more
than two responses $I_1 > 2$ at this point.    
A Markov basis connecting all designs has to contain higher degree moves
and the number of moves in a Markov basis is very large.  
Table 1 presents the highest degrees and the numbers of moves in
the minimal Markov bases of binomial logistic regression models with one 
covariate computed by 4ti2. 
Even for models with one covariate, if a covariate has more than 20
levels, it is difficult to compute Markov bases of models via 4ti2
within a practical amount of time by a computer with a 32-bit processor.

The logistic regression model with $r$ responses 
has the $r$-th Lawrence configuration (\ref{eq:rth-Lawrence}) where 
$A$ is a configuration for Poisson regression model. 
The computation of Markov bases of Poisson regression model is
relatively easy.
Therefore a lattice basis can be computed by Proposition
\ref{prop:rth-Lawrence} and we can apply the proposed method to these
models.  

\begin{table}
 \tbl{The highest degrees and the number of moves in a minimal Markov
 basis for binomial logistic regression models with one covariate}
 {\begin{tabular}{lccccccc}\hline 
   & \multicolumn{7}{c}{number of levels of a covariate}\\
   & 10 & 11 & 12 & 13 & 14 & 15 & 16 \\ \hline
  maximum degree & 18 & 20 & 22 & 24 & 26 & 28 & 30 \\ 
  number of moves  & 1830 & 3916 & 8569 & 16968 & 34355 & 66066 &
			      123330 \\ \hline
 \end{tabular}}
\end{table}

In the experiment we considered the goodness-of-fit test of binomial or
trinomial logistic regression model with two covariates against a model
with three covariates 
\[
p_{i_1\mid i_2i_3i_4}	=
\left\{
\begin{array}{ll}
 \displaystyle{
 \frac{\exp (\mu_{i_1} + \alpha_{i_1} i_2 + \beta_{i_1} i_3) +
 \gamma_{i_1} i_4}
  {1+ \sum_{i'_1=1}^{I_1-1} 
  \exp (\mu_{i'_1} + \alpha_{i'_1} i_2 + \beta_{i'_1} i_3 
  + \gamma_{i'_1} i_4)},} 
  & \quad i_1 = 1,\ldots,I_1-1,\\
 \displaystyle{
 \frac{1}
  {1+ \sum_{i'_1=1}^{I_1-1} \exp (\mu_{i'_1} + \alpha_{i'_1} i_2
   + \beta_{i'_1} i_3 + \gamma_{i'_1} i_4)},} & \quad 
  i_1 = I_1,
\end{array}
\right.
\]
% \[
% p_{i_1 \mid i_2 i_3 i_4}	=
% \frac{\exp (\mu + \alpha_{i_1} i_2 + \beta_{i_1} i_3 + \gamma_{i_1} i_4)}
% {1+ \sum_{i'_1=1}^{I_1} \exp (\mu + \alpha_{i'_1} i_2 + \beta_{i'_1} i_3
% + \gamma_{i'_1} i_4)}, 
% \]
where $(i_2,i_3) \in \cI_2 \times \cI_3$, $i_4 \in \cI_4$.
We use the LR statistic as a test statistic.
We assume that $\cI_2 \times \cI_3$ are $4 \times 4$ and $10 \times 10$
%designs checkered designs 
checkered designs 
as described in the following figure %Figure \ref{fig:design} 
for the $4\times 4$ case,
where only $(i_2,i_3)$ in dotted patterns have positive frequencies. %responses. 
\begin{center}
\includegraphics[scale=0.60]{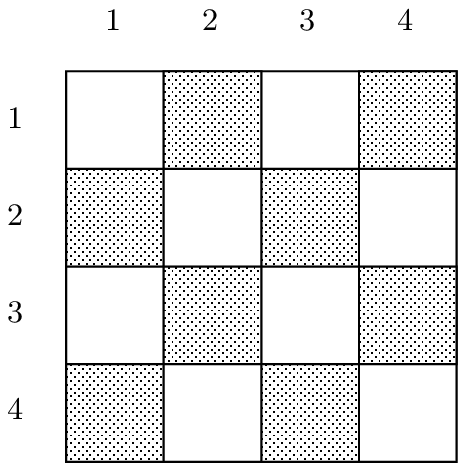}
\end{center}
We also assume that $\cI_4 = \{1,2,3,4,5\}$.
The degrees of freedom of the asymptotic $\chi^2$ distribution of
the LR statistic is $1$.
We set the sample sizes for $4 \times 4$ and $10 \times 10$ designs
are $200$ and $625$, respectively.
We also set $(\text{burn-in},\text{iteration})=(1000,10000)$.

%\begin{figure}[htbp]
% \centering
% \noindent
% \includegraphics[scale=0.60]{design1.eps}
% \includegraphics[scale=0.60]{design2.eps}
% \caption{$4 \times 4$ and $10 \times 10$ checkered designs}
% \caption{$4 \times 4$ checkered designs}
% \label{fig:design}
% \end{figure}

Figures \ref{fig:logit1} and \ref{fig:logit2} present the results for a
binomial and a trinomial logistic regression models with $4 \times 4$ 
checkered pattern, respectively.   
Solid lines in the left figures are asymptotic $\chi^2$ distributions.
$\alpha_k$ is generated from $Po(\lambda)$, $\lambda=1,10,50$.
We can compute Markov bases in these models.
So we also present the results for Markov bases.
We can see from the figures that the proposed methods show comparative
performance to a Markov basis also in these models.
We note that the paths are also stable even for the case where 
$\alpha_1,\ldots,\alpha_K$ are generated from $Po(50)$. 

Figure \ref{fig:logit3} presents the results for $10 \times 10$
checkered pattern.    
In this case Markov bases cannot be computed via 4ti2 by our machine.
$\alpha_k$ is generated from $Geom(p)$, $\lambda=0.1,0.5$.
Also in these cases the results look stable.
These results shows that the proposed method is useful for the logistic
regression models for which that it is difficult to compute a Markov basis.

\begin{figure}[htbp]
\centering
\noindent
\includegraphics[scale=0.20]{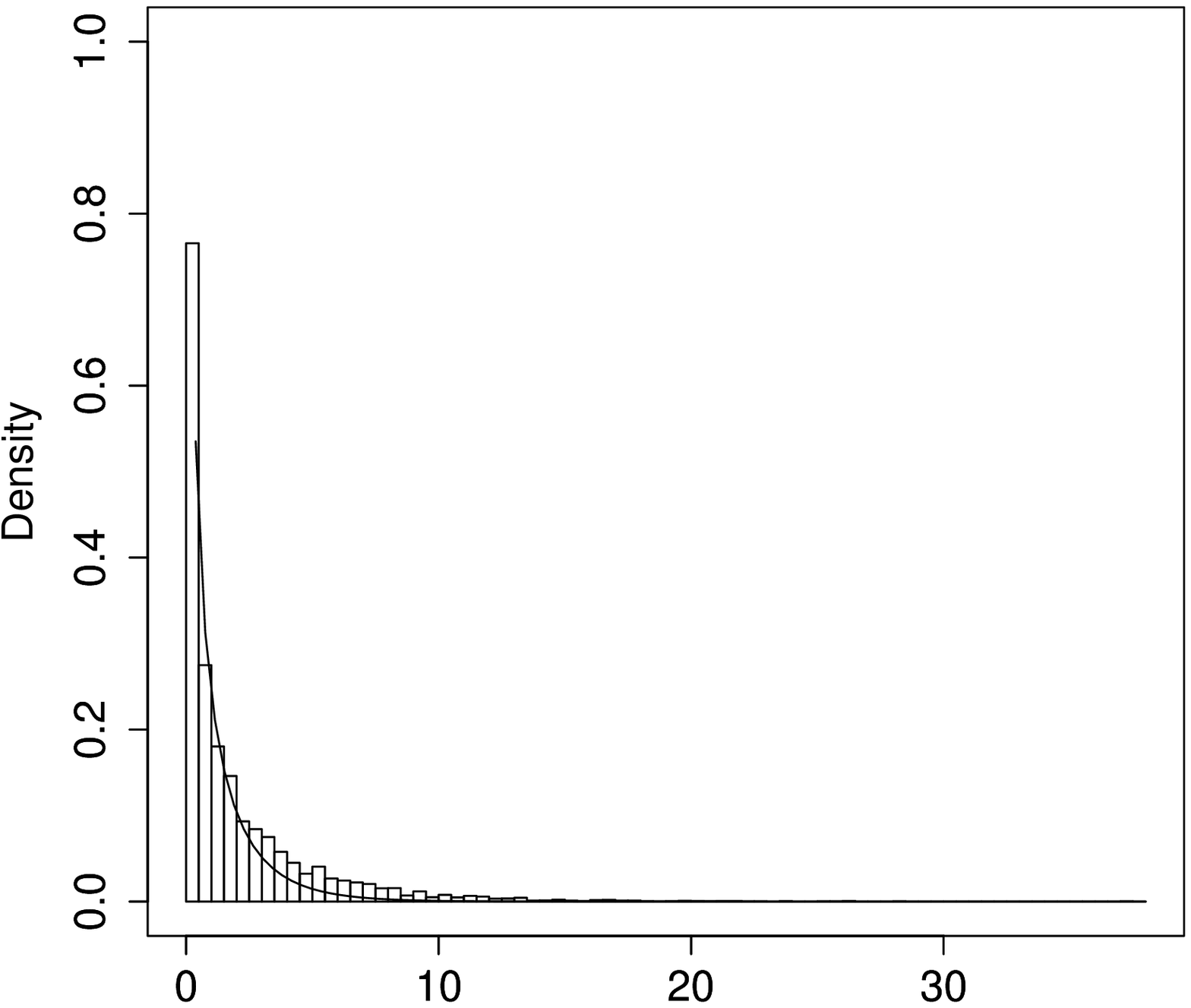}
\includegraphics[scale=0.20]{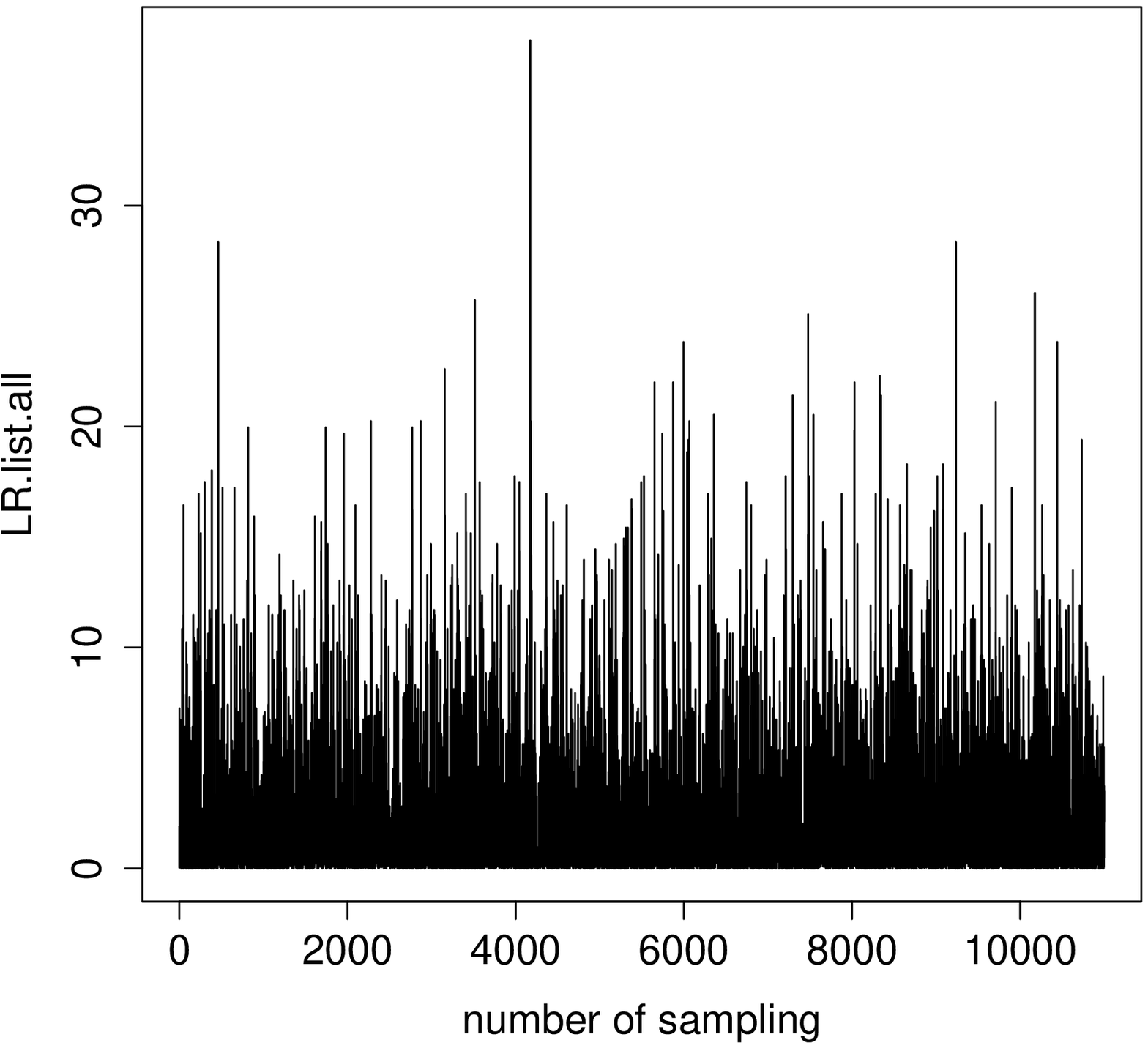}
\includegraphics[scale=0.20]{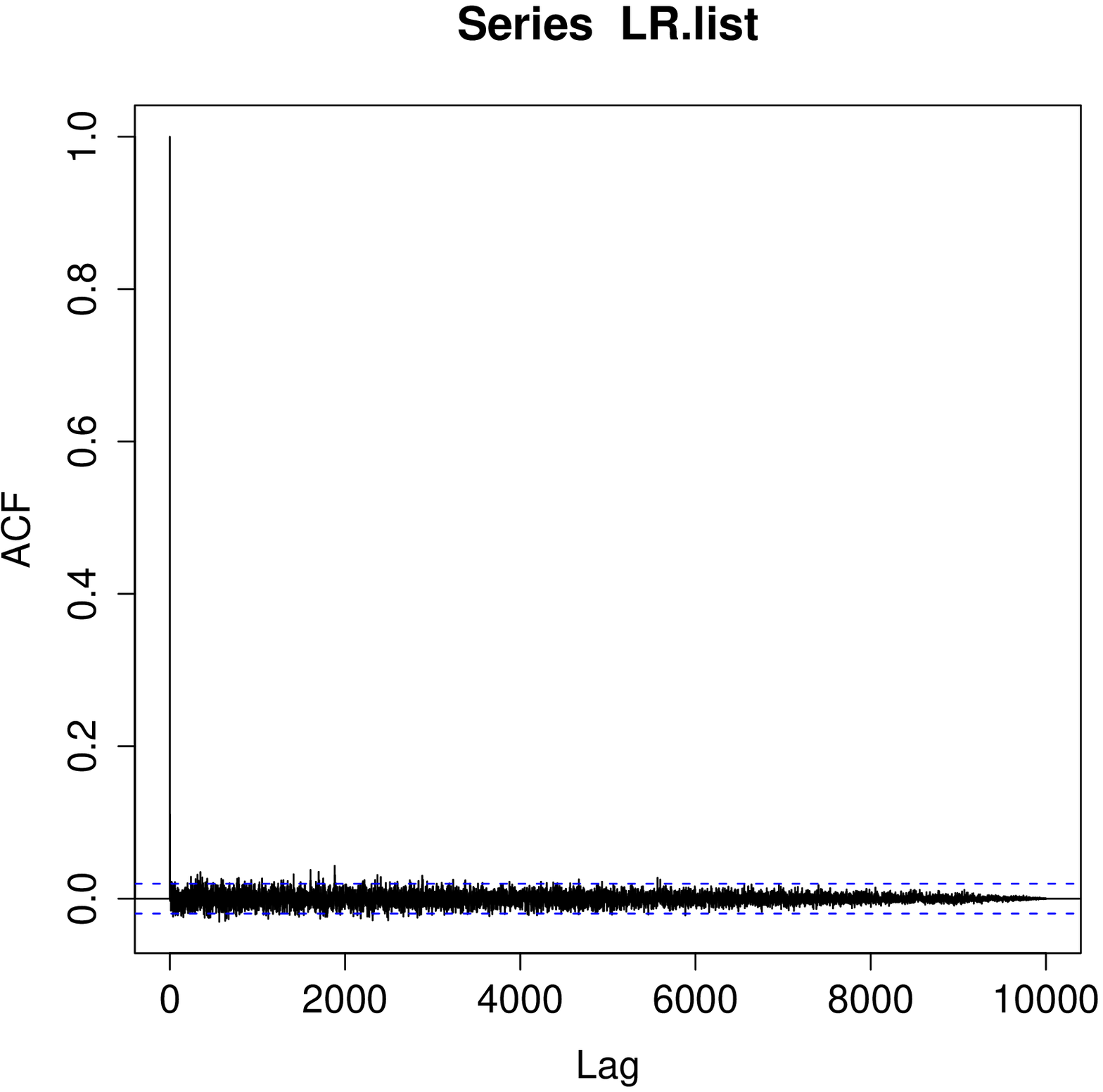}\\
 (a) a Markov basis\\
\includegraphics[scale=0.20]{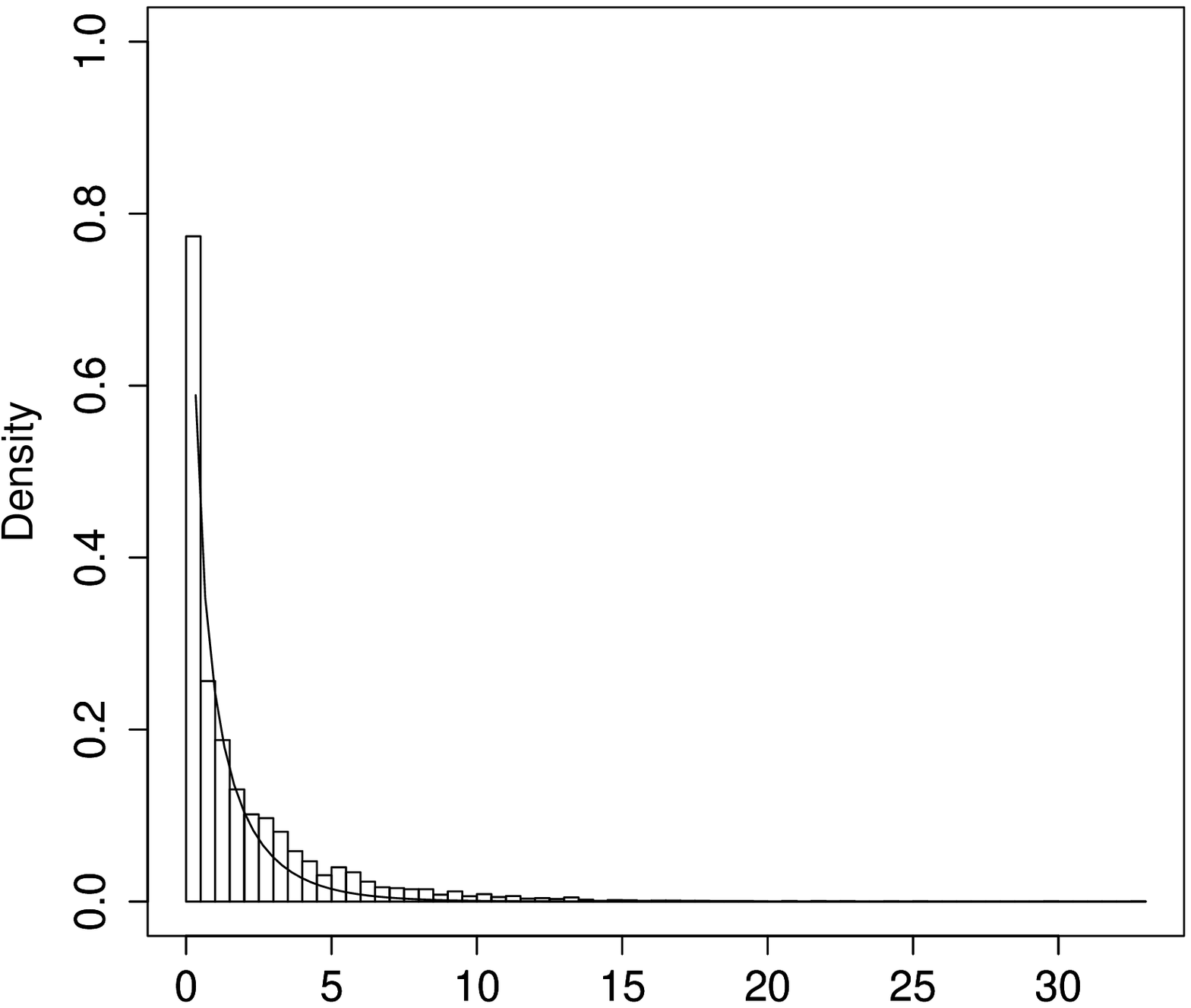}
\includegraphics[scale=0.20]{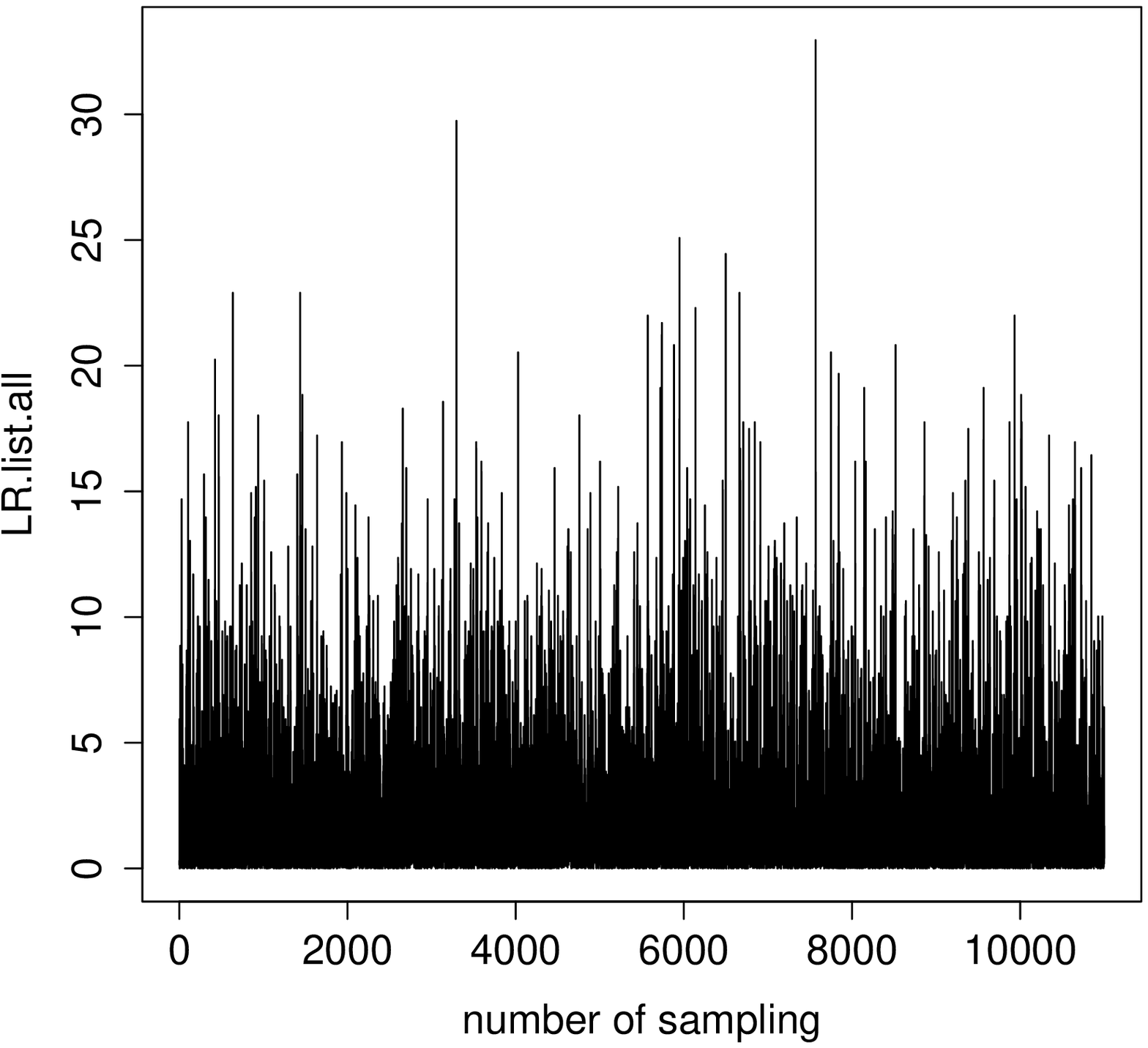}
\includegraphics[scale=0.20]{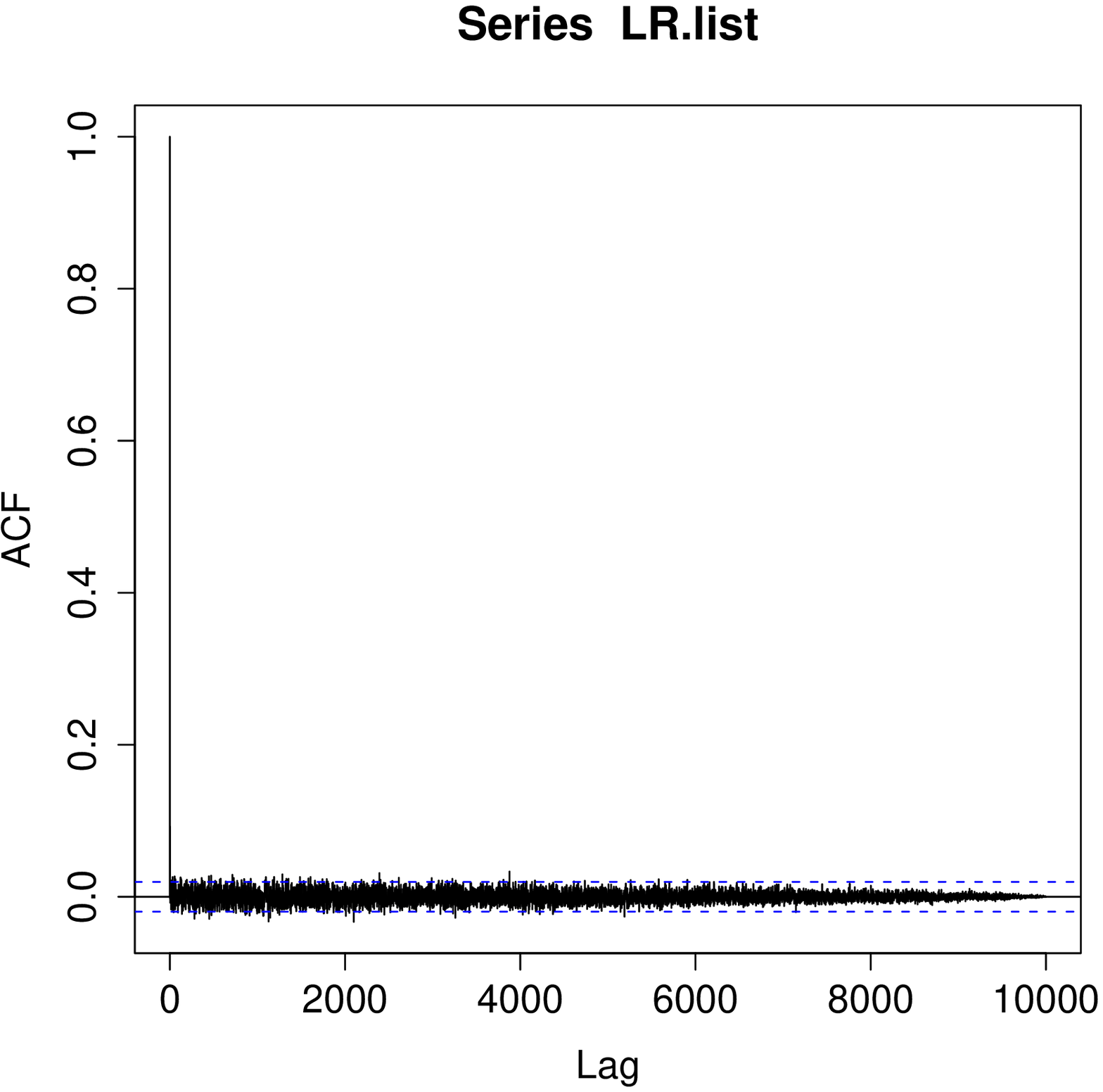}\\
 (b) a lattice basis with $Po(1)$\\
\includegraphics[scale=0.20]{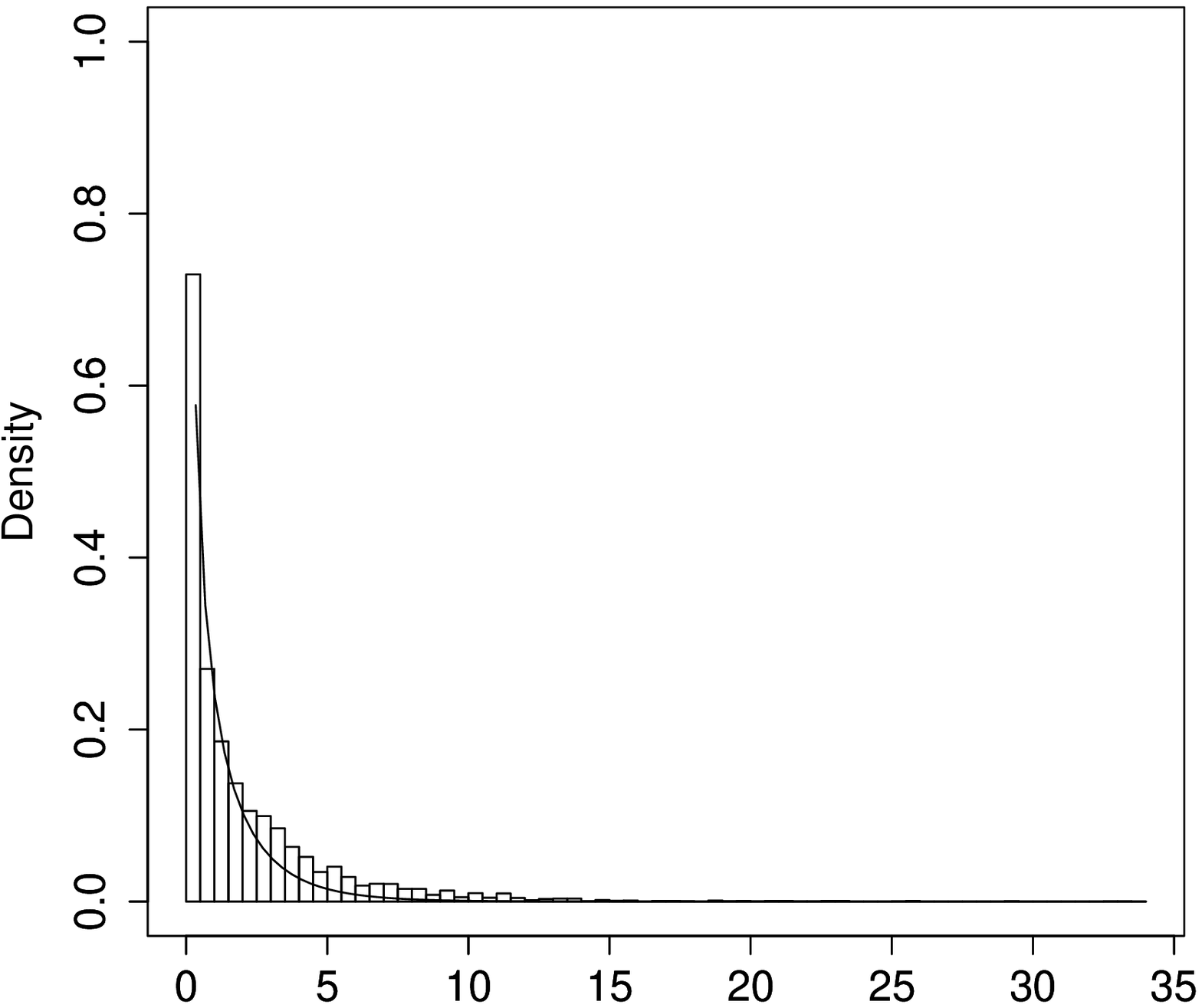}
\includegraphics[scale=0.20]{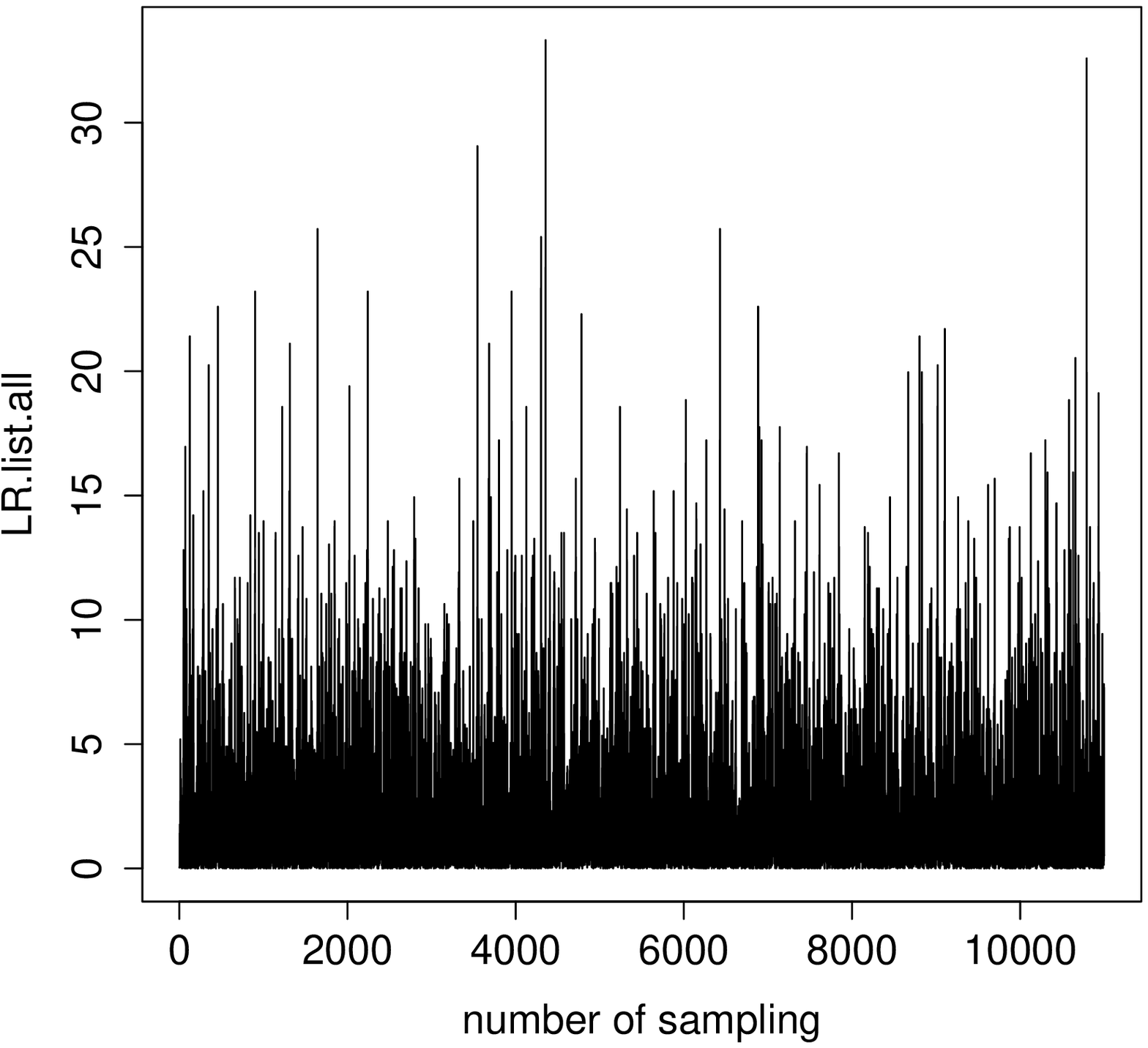}
\includegraphics[scale=0.20]{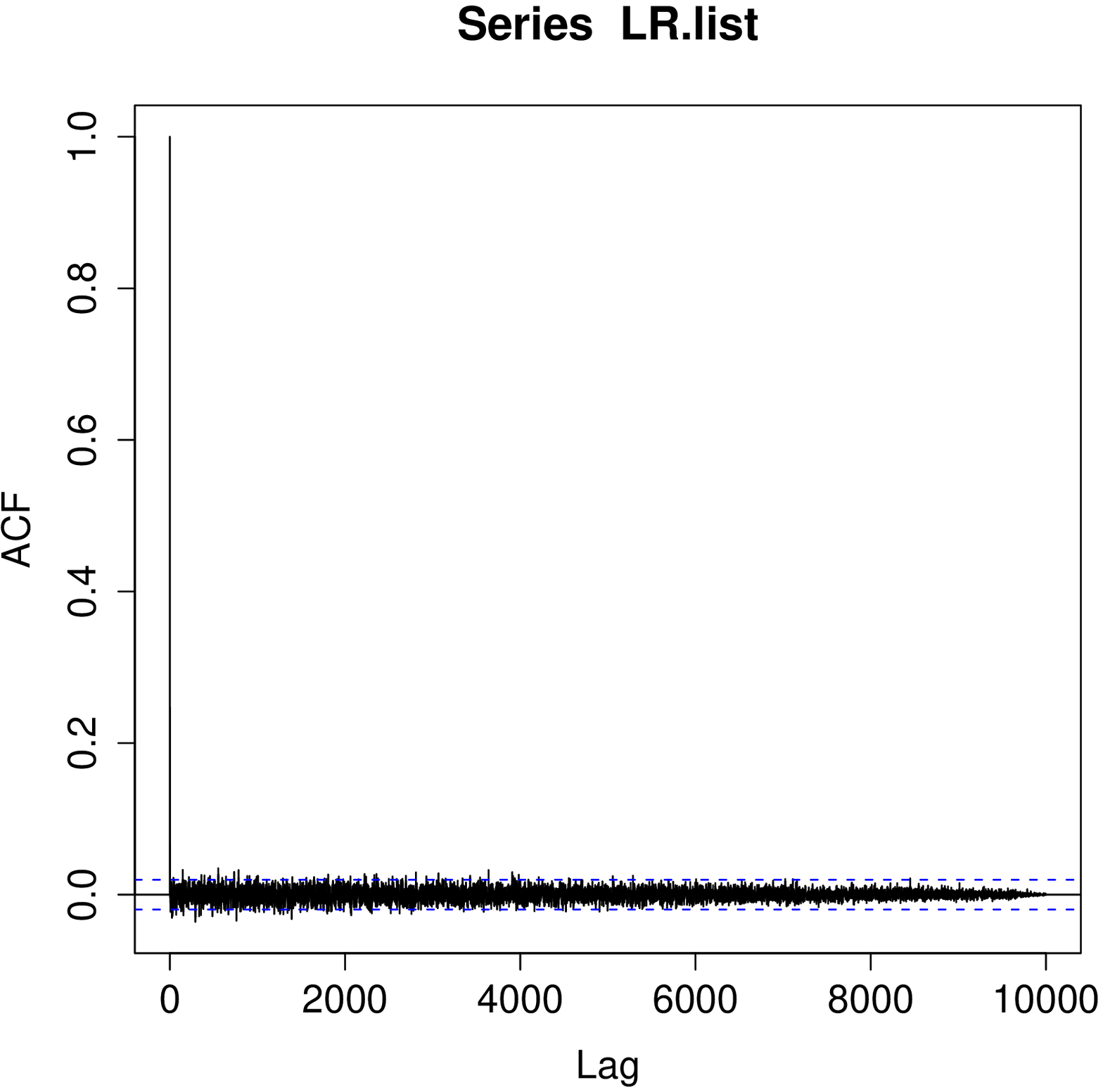}\\
 (c) a lattice basis with $Po(10)$\\
\includegraphics[scale=0.20]{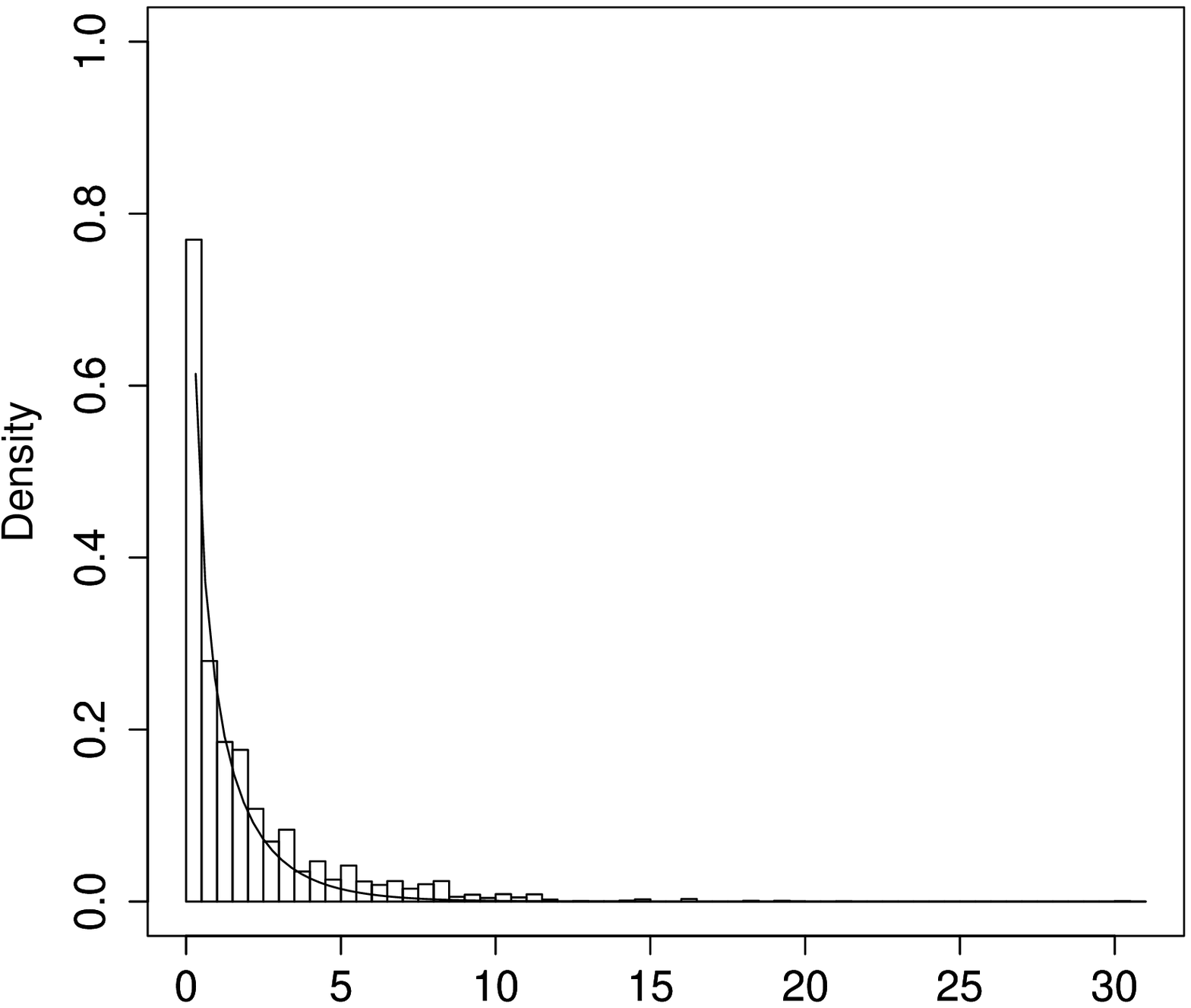}
\includegraphics[scale=0.20]{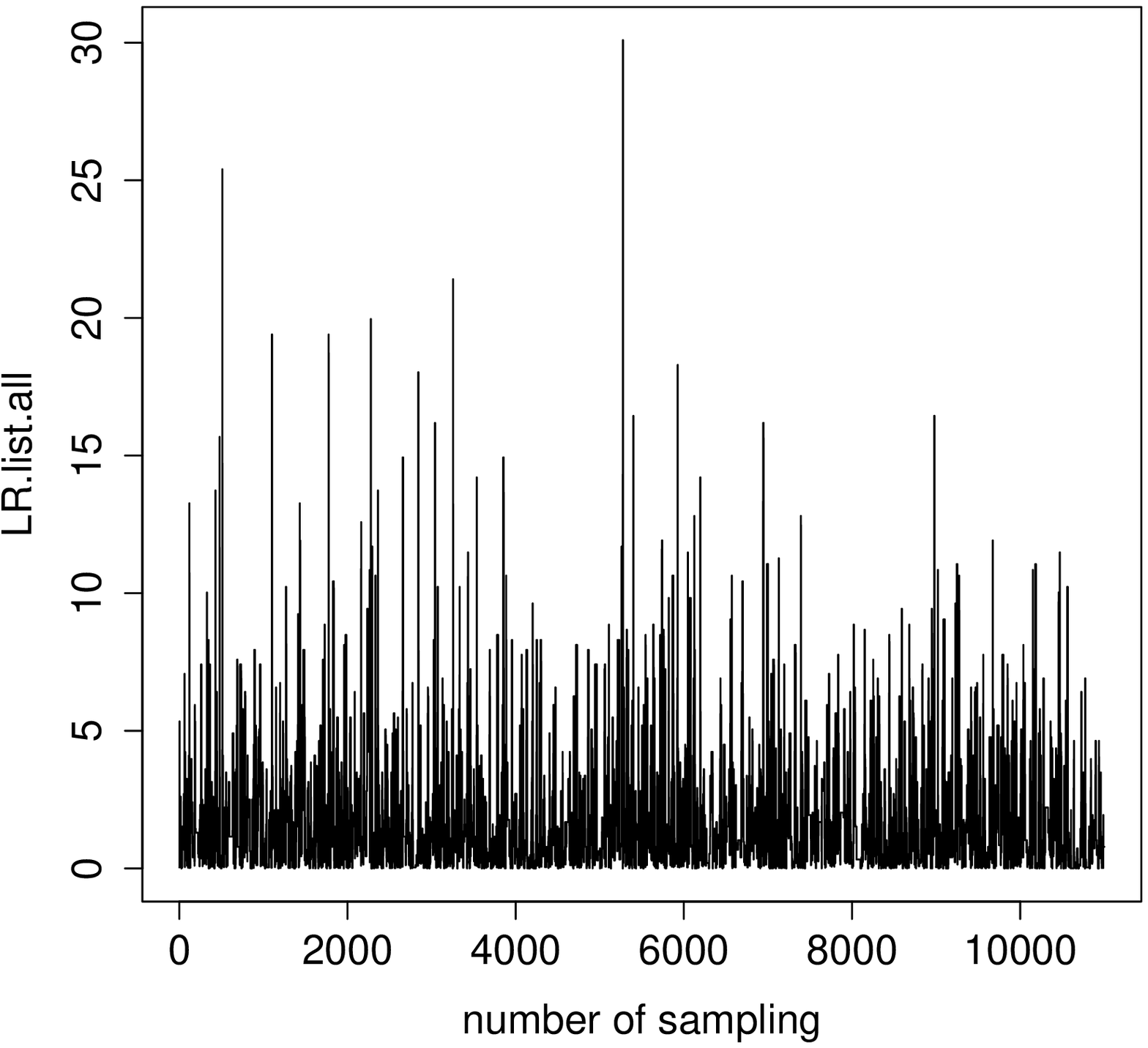}
\includegraphics[scale=0.20]{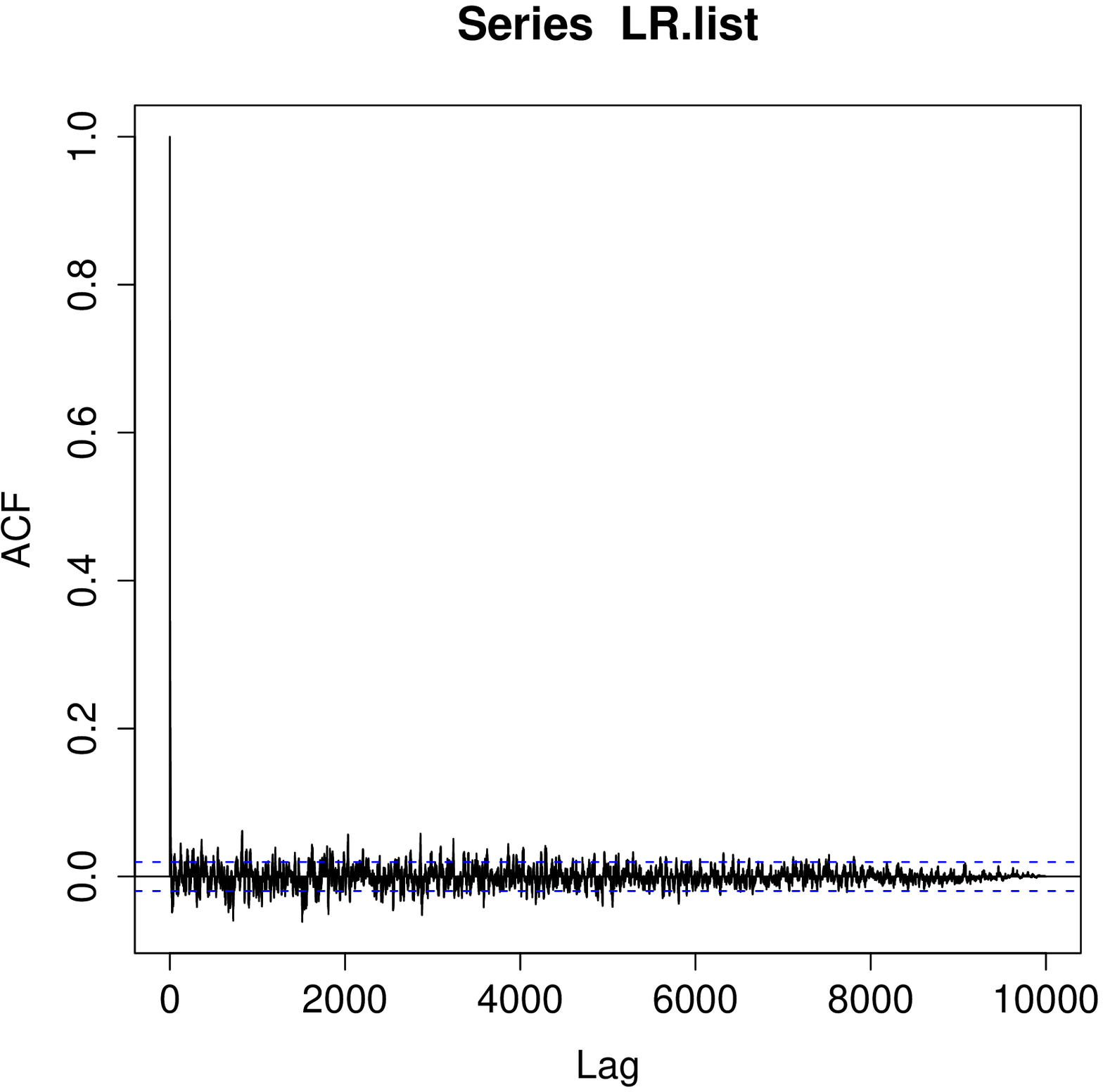}\\
 (d) a lattice basis with $Po(50)$\\
 \caption{Histograms, paths of LR statistic and correlograms of paths
 for discrete logistic regression model ((burn in,iteration) $=
 (1000,10000)$)} 
 \label{fig:logit1}
\end{figure}

\begin{figure}[h]
\centering
\noindent
\includegraphics[scale=0.20]{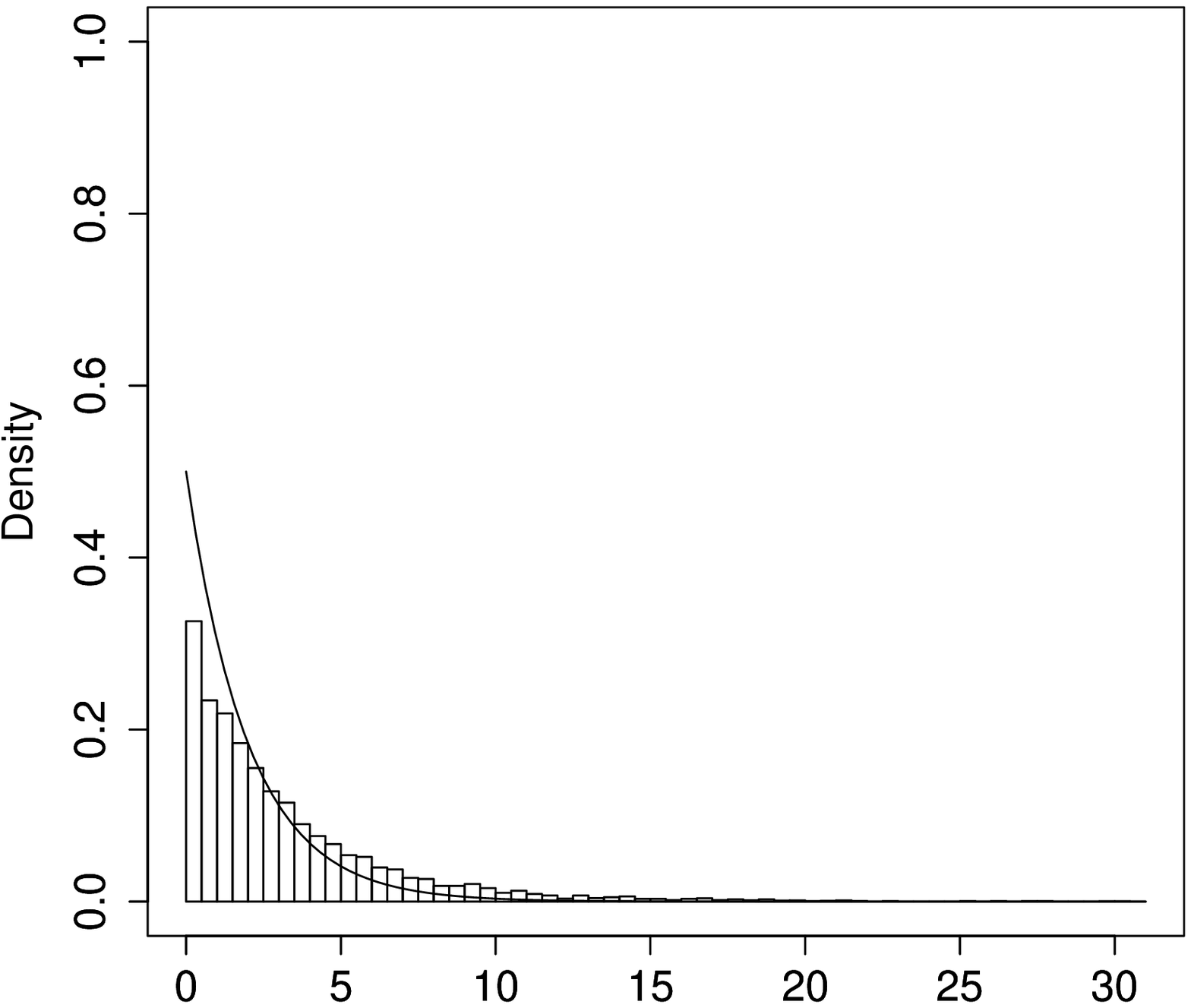}
\includegraphics[scale=0.20]{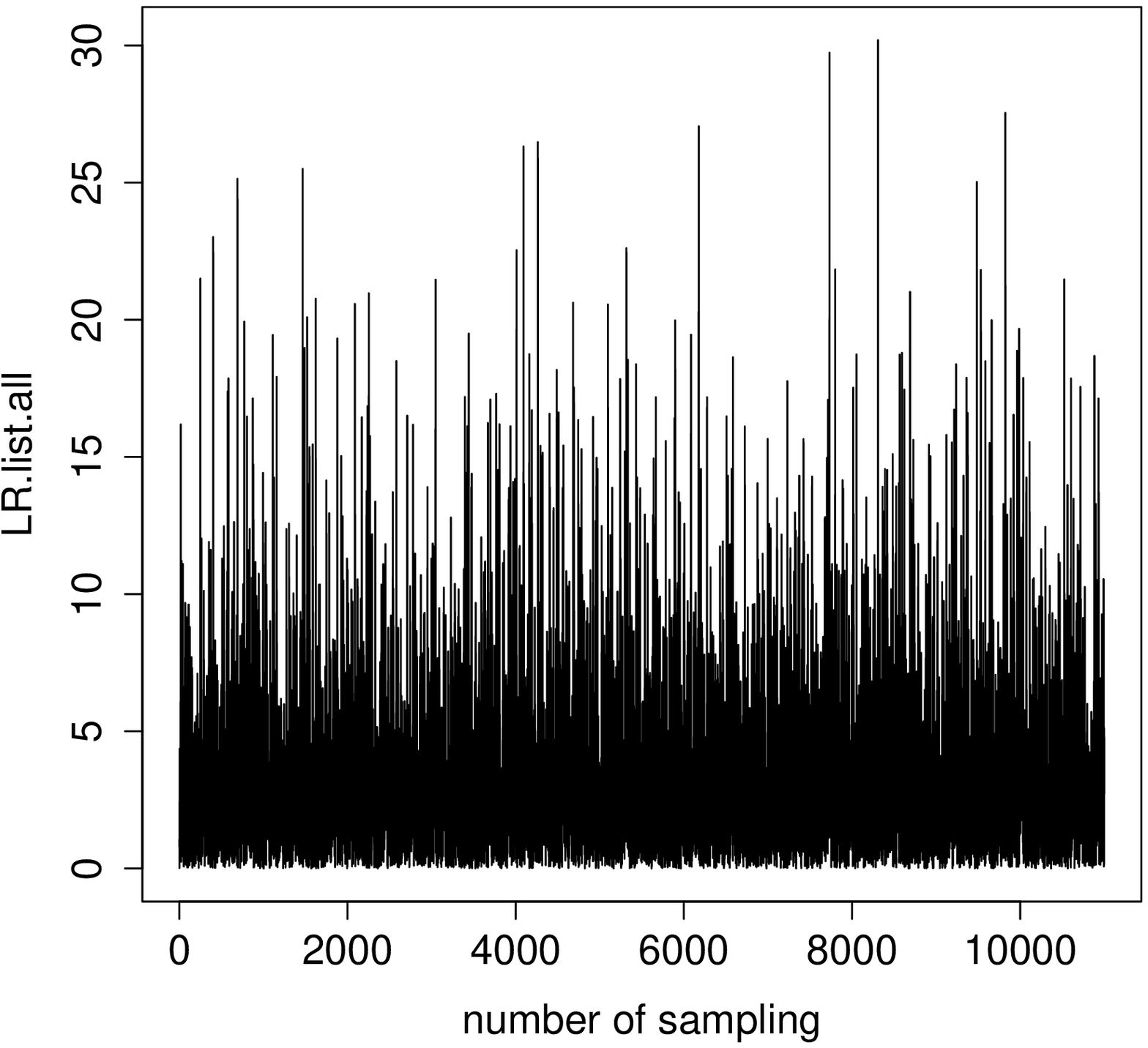}
\includegraphics[scale=0.20]{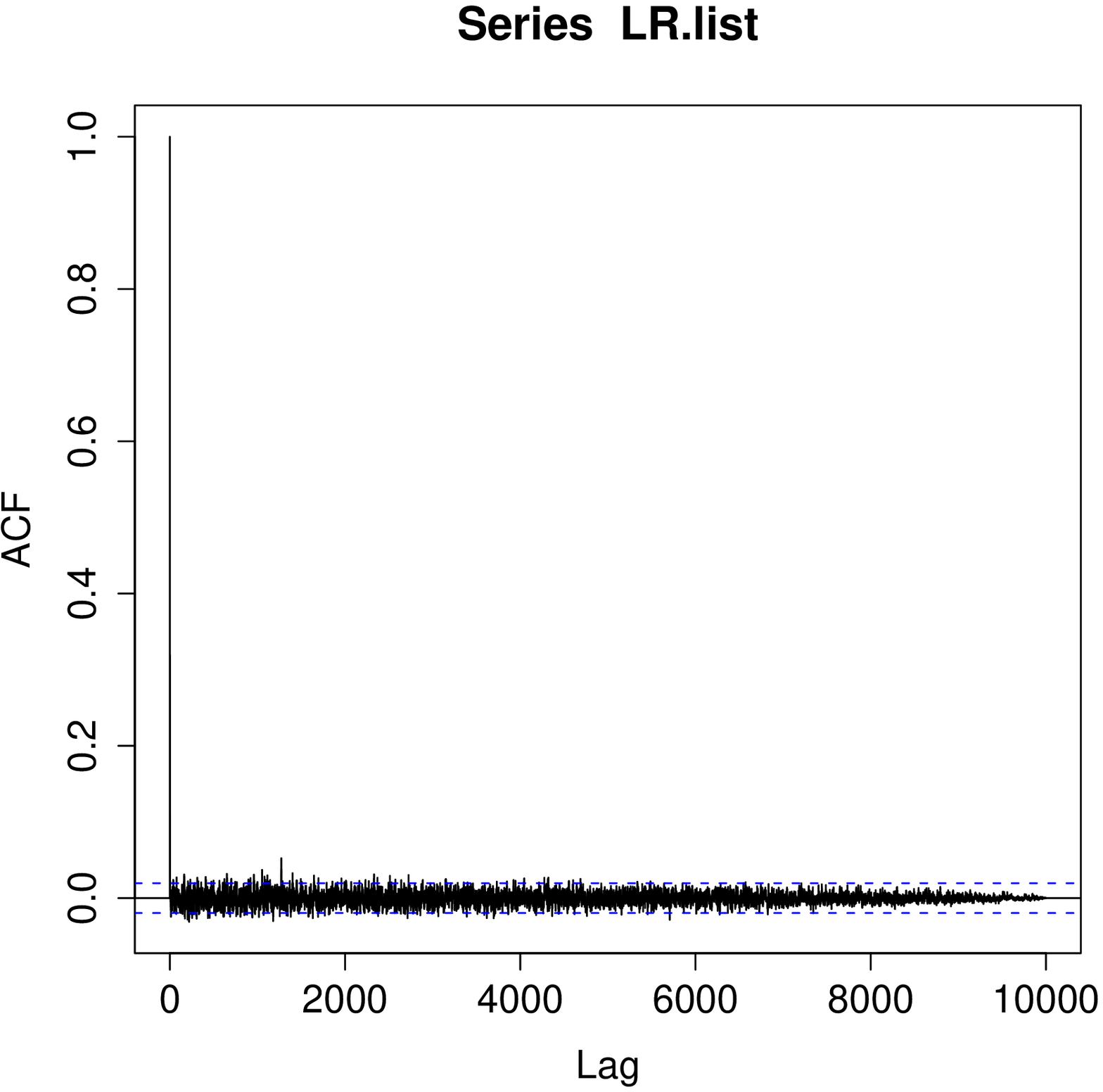}\\
 (a) a Markov basis\\
\includegraphics[scale=0.20]{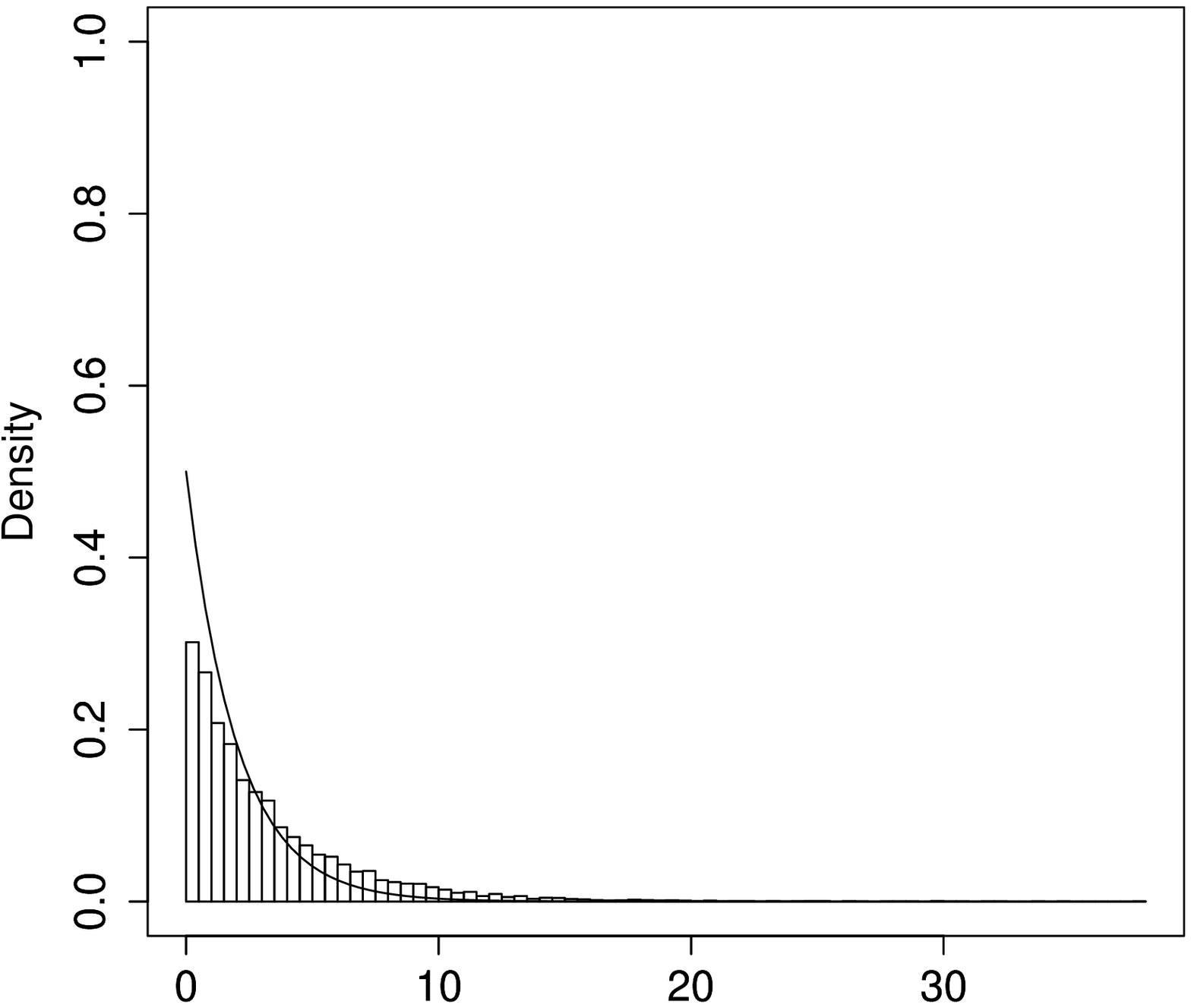}
\includegraphics[scale=0.20]{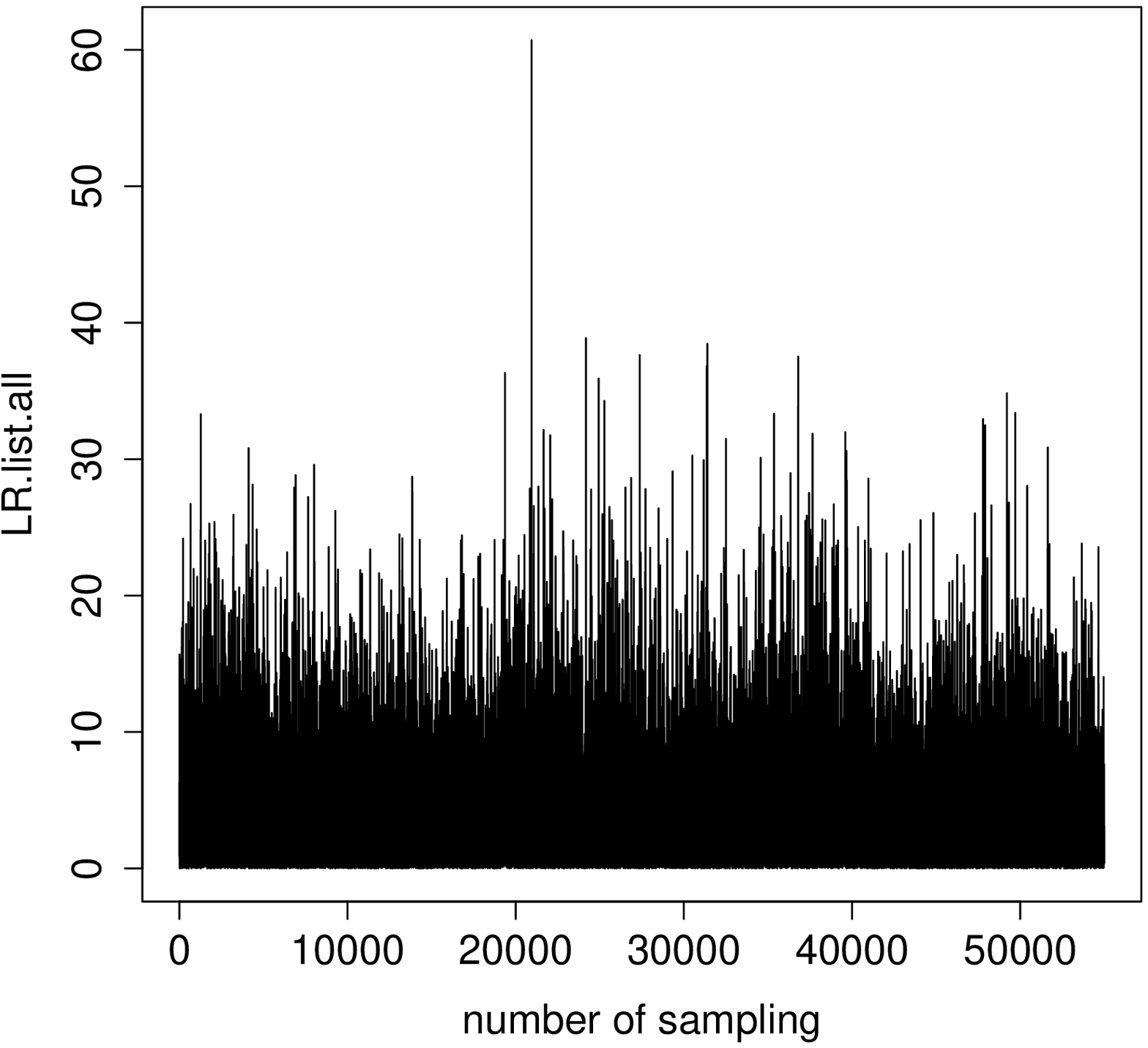}
\includegraphics[scale=0.20]{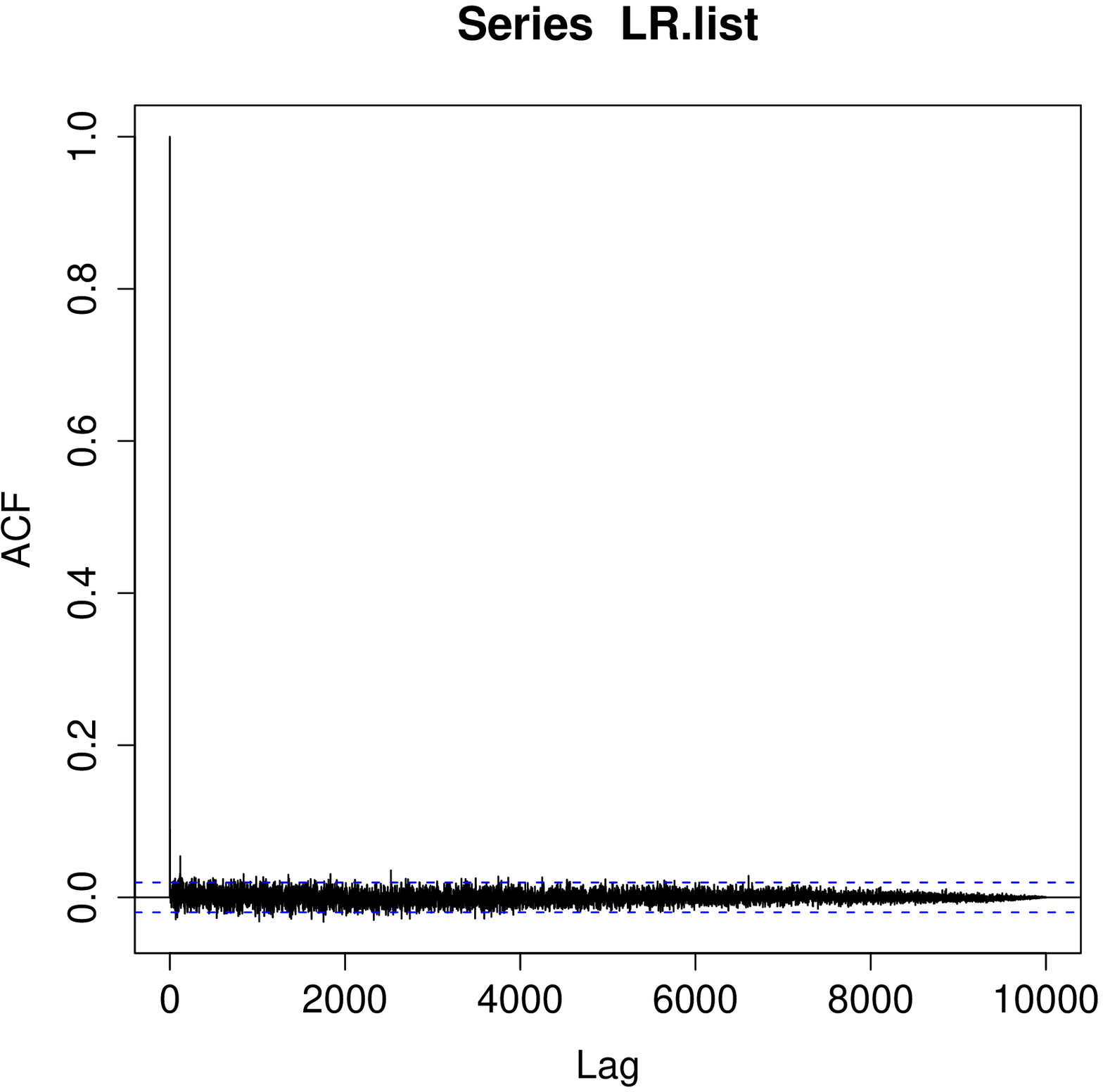}\\
 (b) a lattice basis with $Po(1)$\\
\includegraphics[scale=0.20]{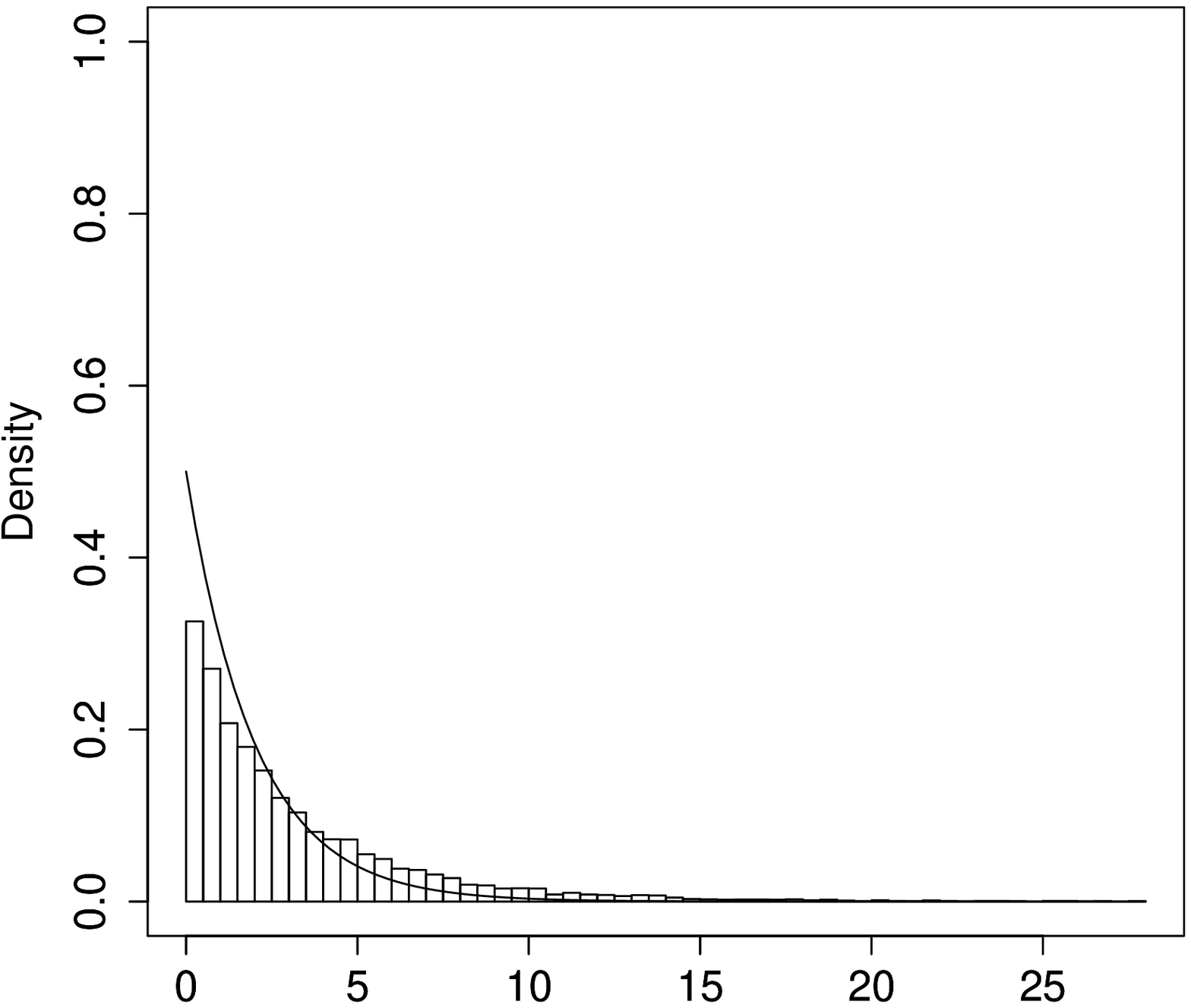}
\includegraphics[scale=0.20]{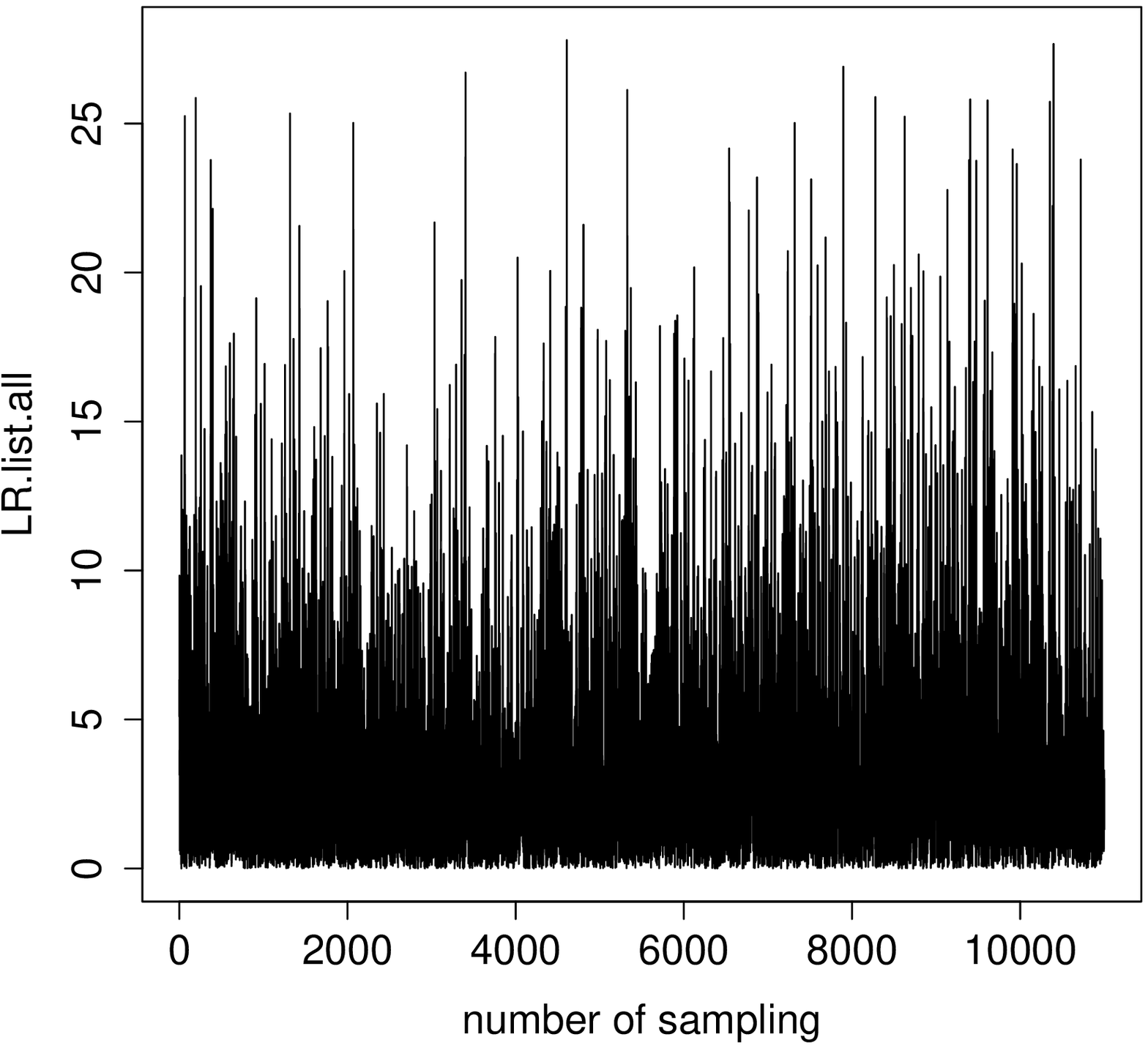}
\includegraphics[scale=0.20]{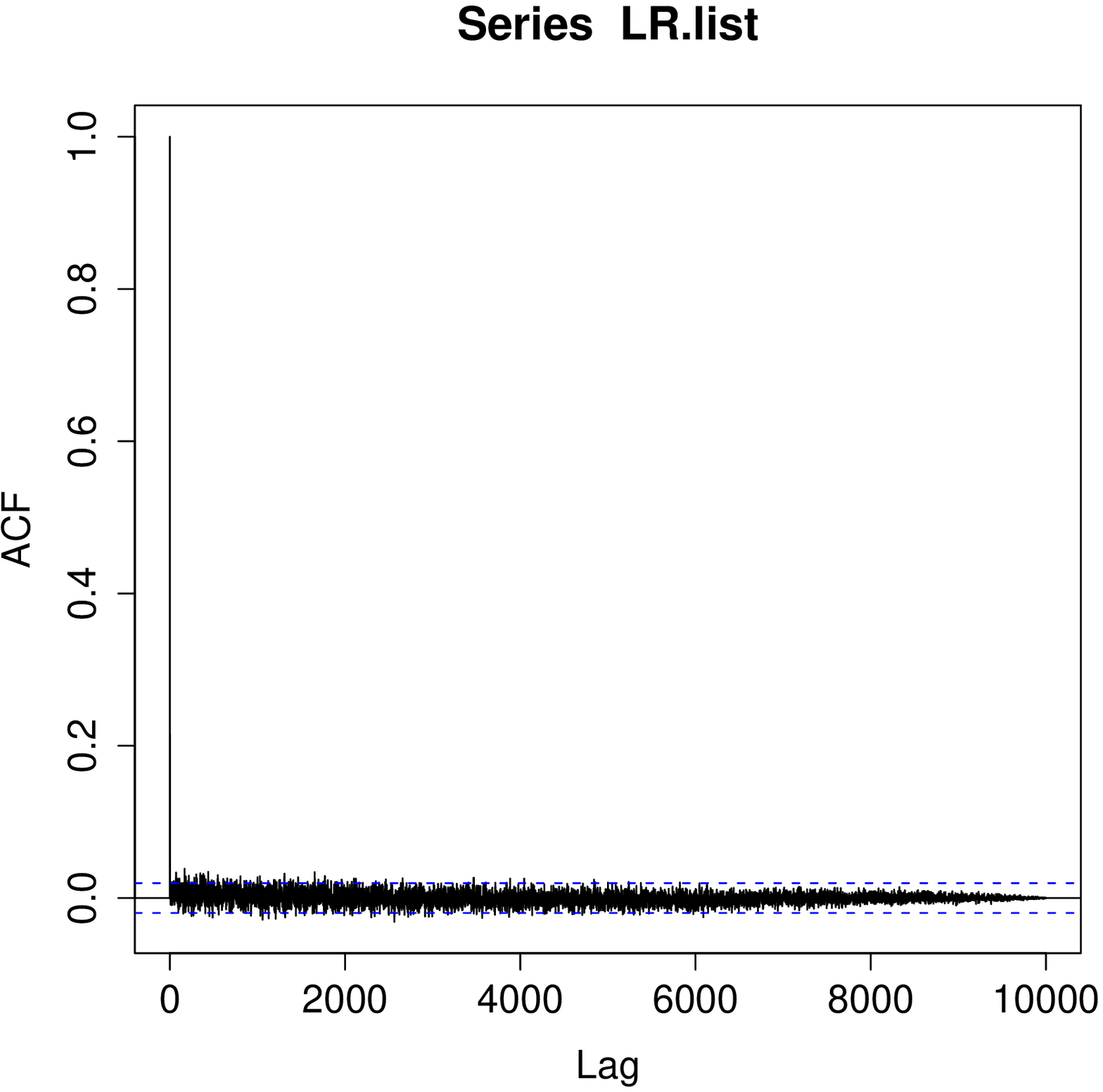}\\
 (c) a lattice basis with $Po(10)$\\
\includegraphics[scale=0.20]{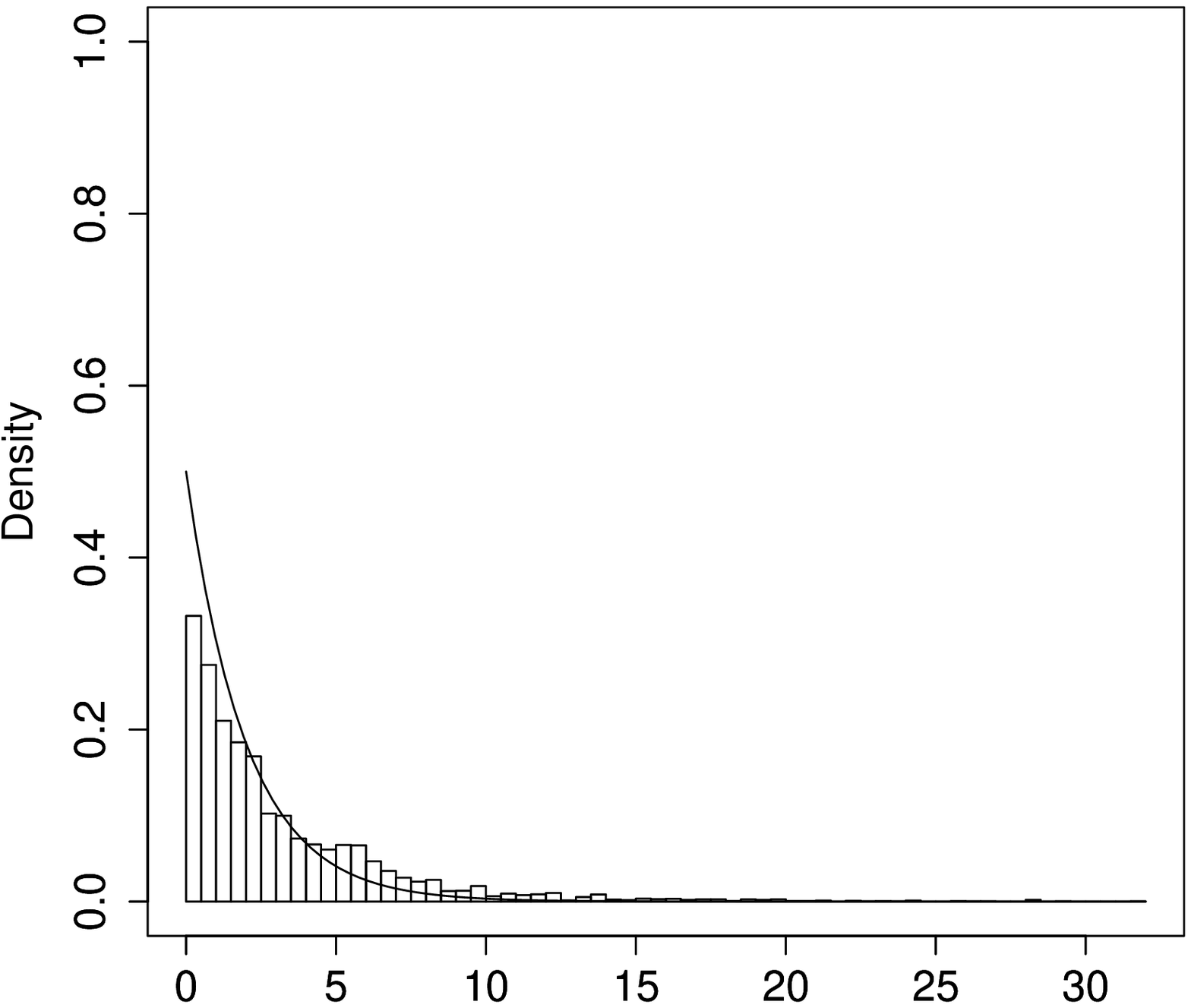}
\includegraphics[scale=0.20]{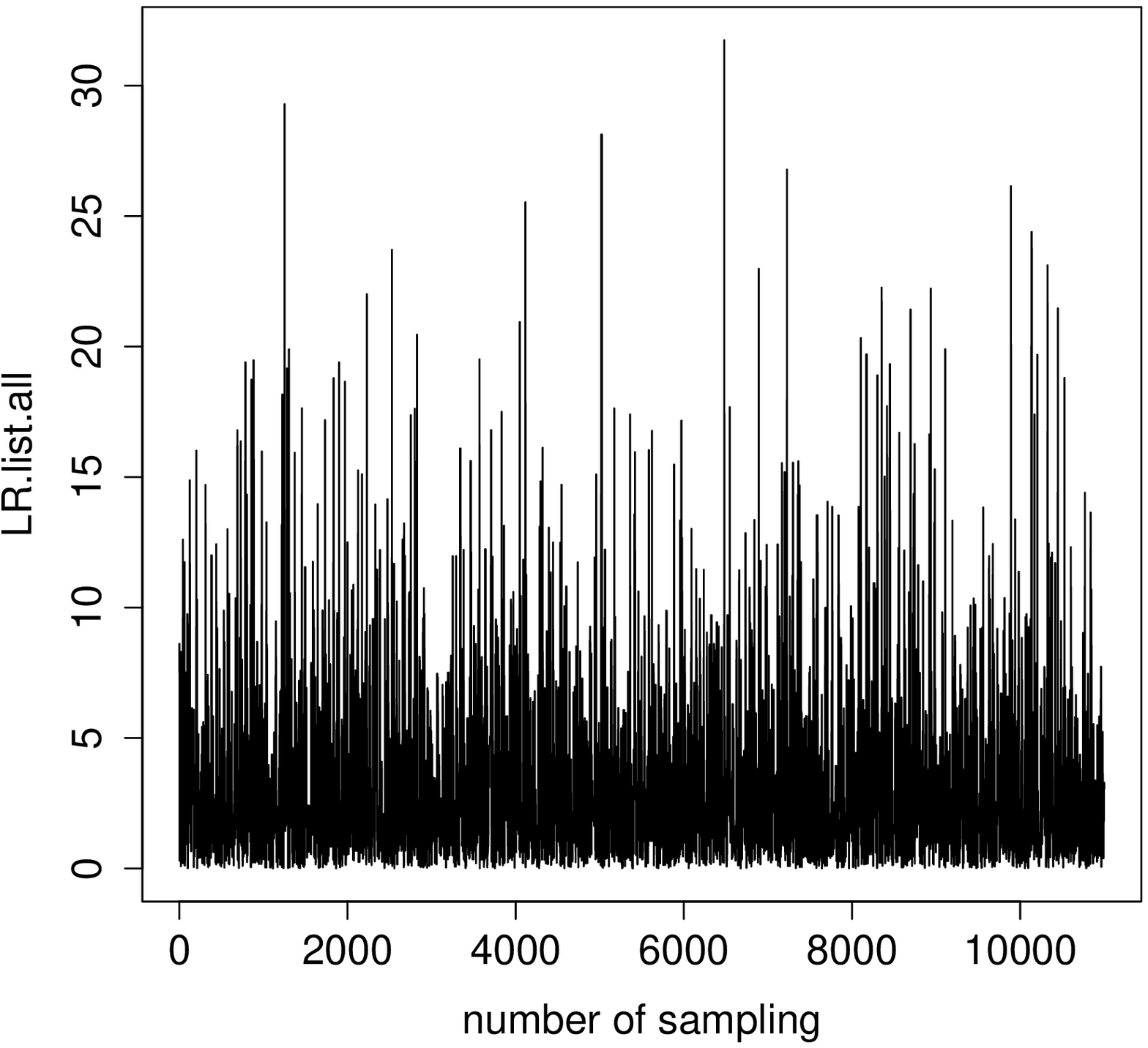}
\includegraphics[scale=0.20]{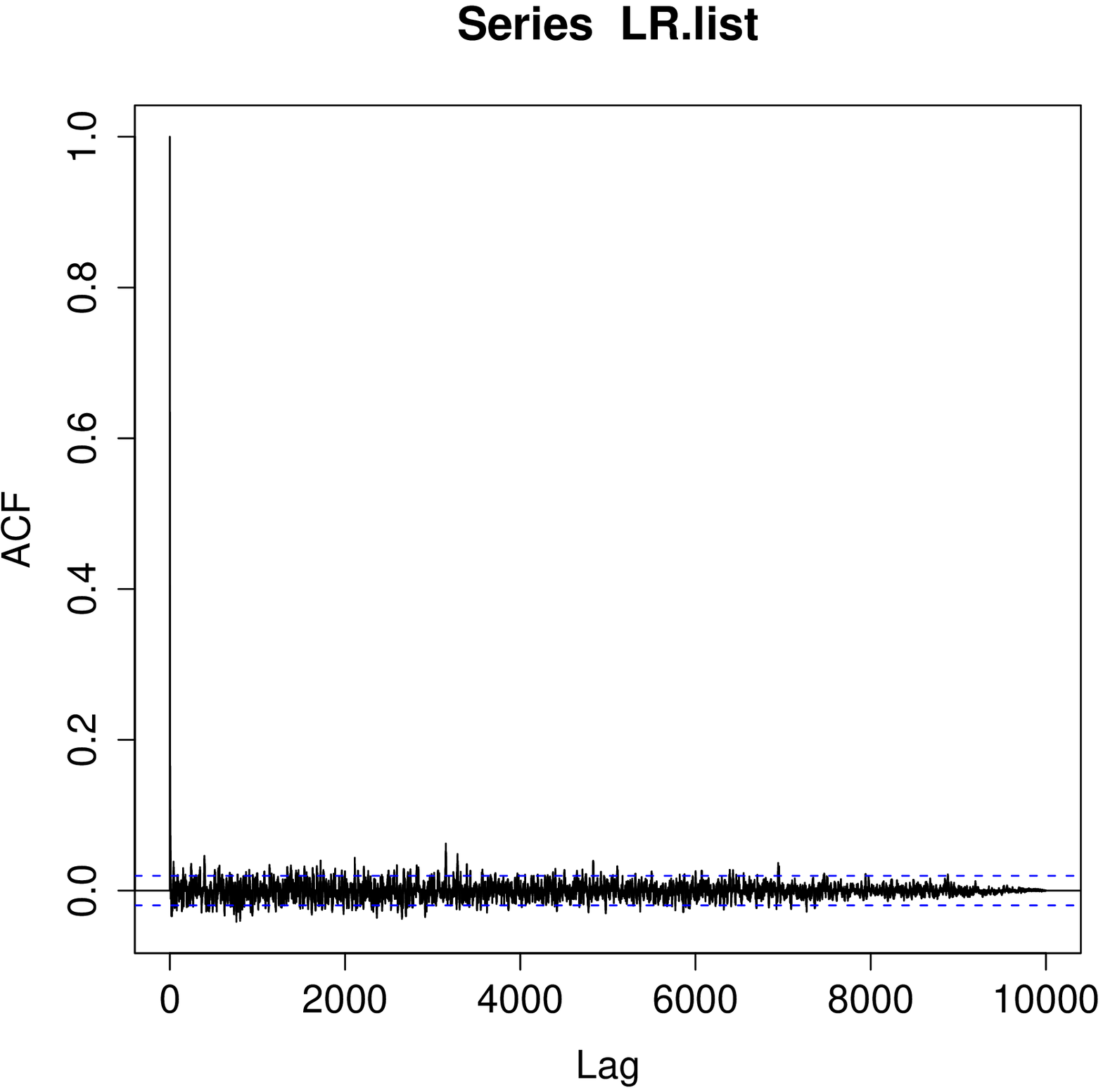}\\
 (d) a lattice basis with $Po(50)$\\
 \caption{Histograms, paths of LR statistic and correlograms of paths
 for trinomial discrete logit model ((burn in,iteration) $=
 (1000,10000)$)} 
 \label{fig:logit2}
\end{figure}

\begin{figure}[h]
\centering
\noindent
\includegraphics[scale=0.20]{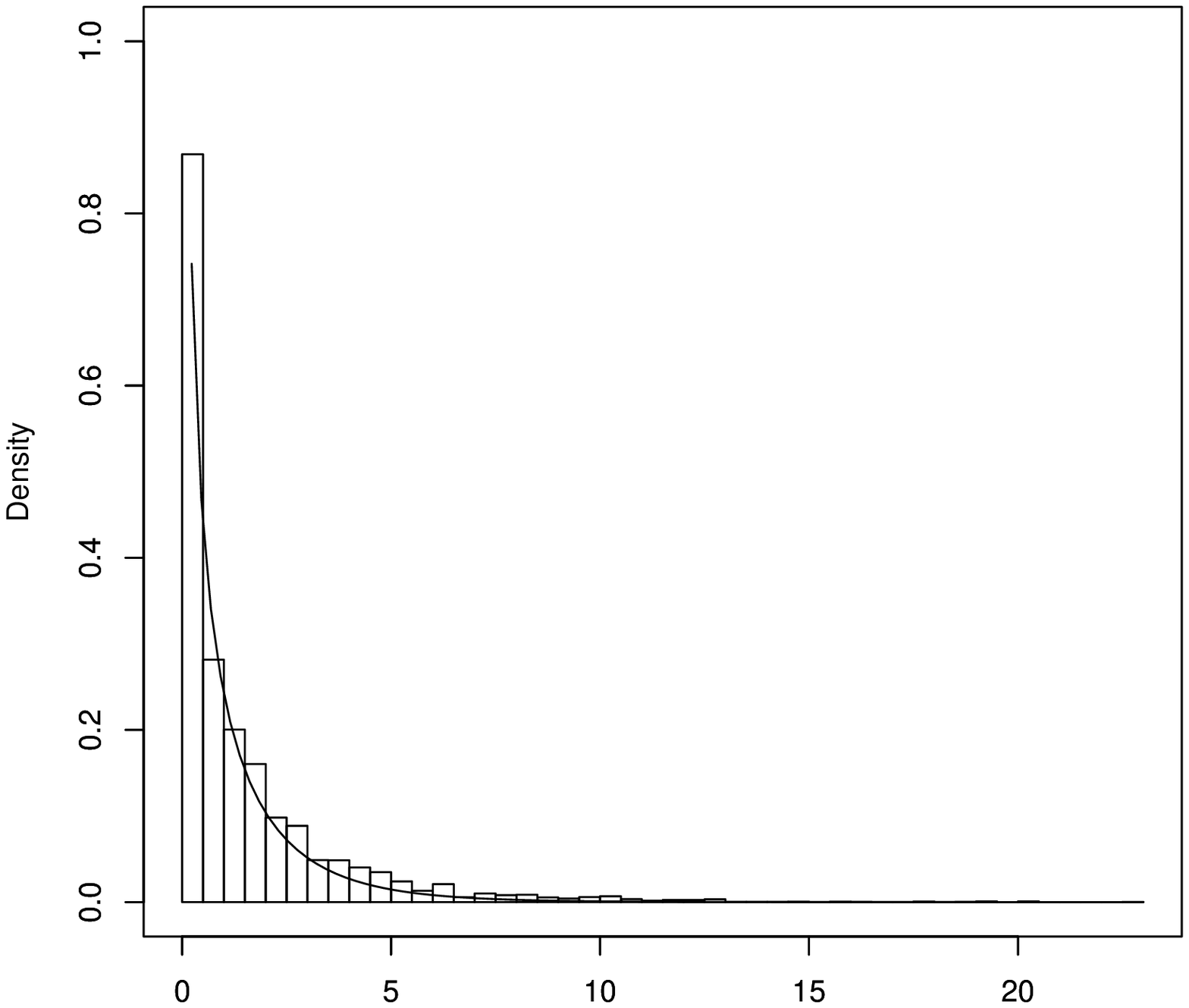}
\includegraphics[scale=0.20]{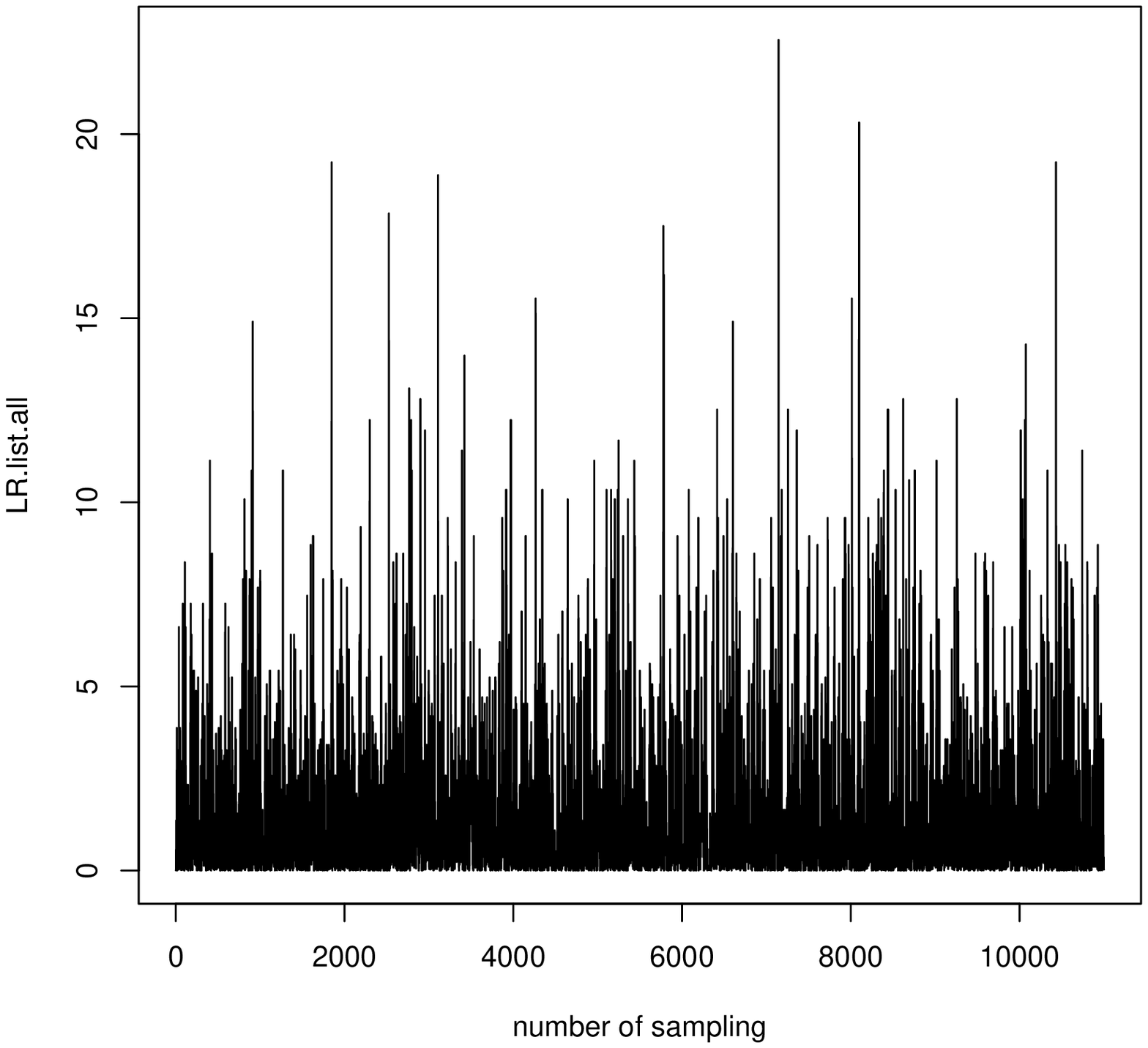}
\includegraphics[scale=0.20]{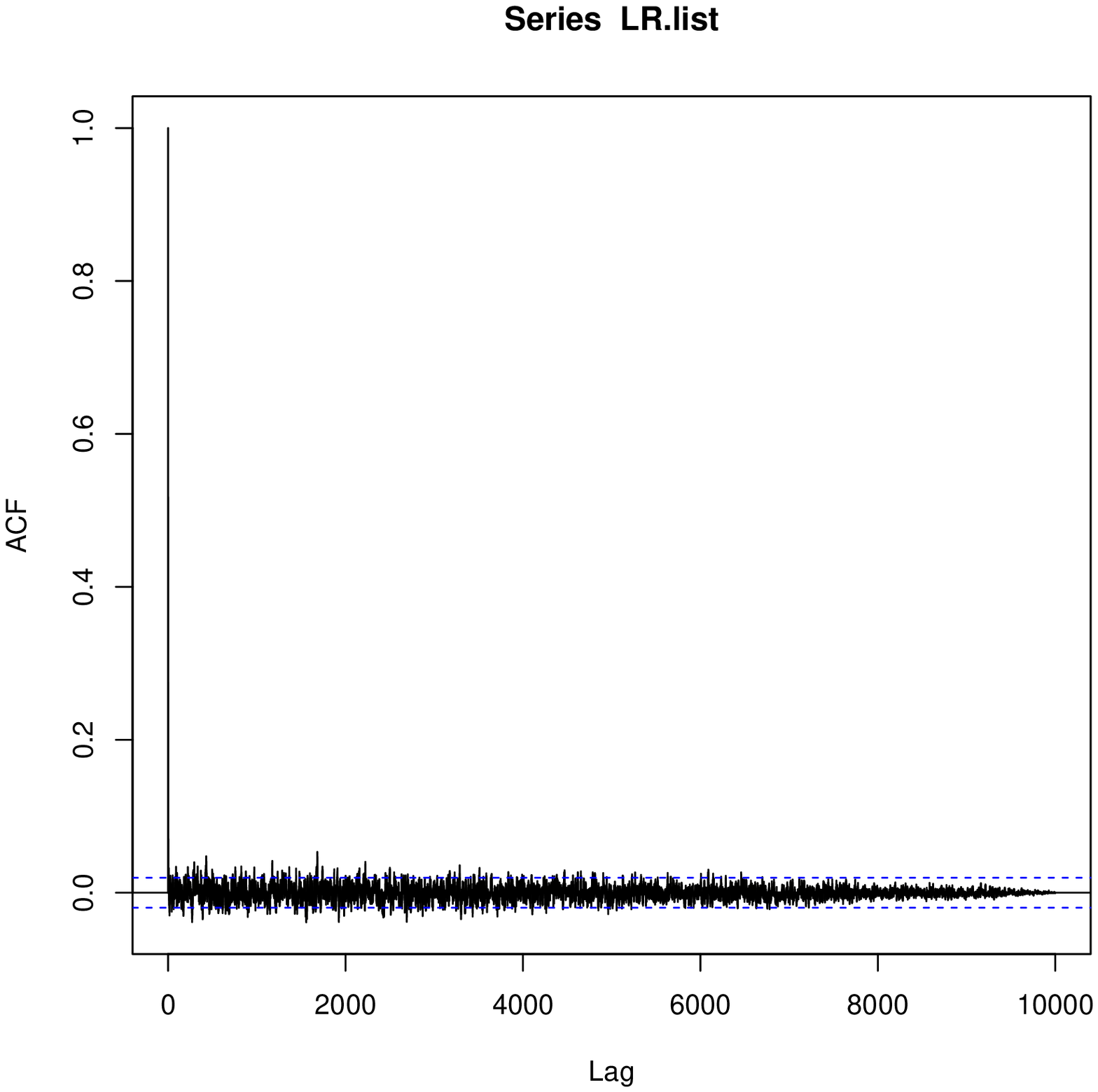}\\
 (a) binomial, a lattice basis with $Geom(0.1)$\\
\includegraphics[scale=0.20]{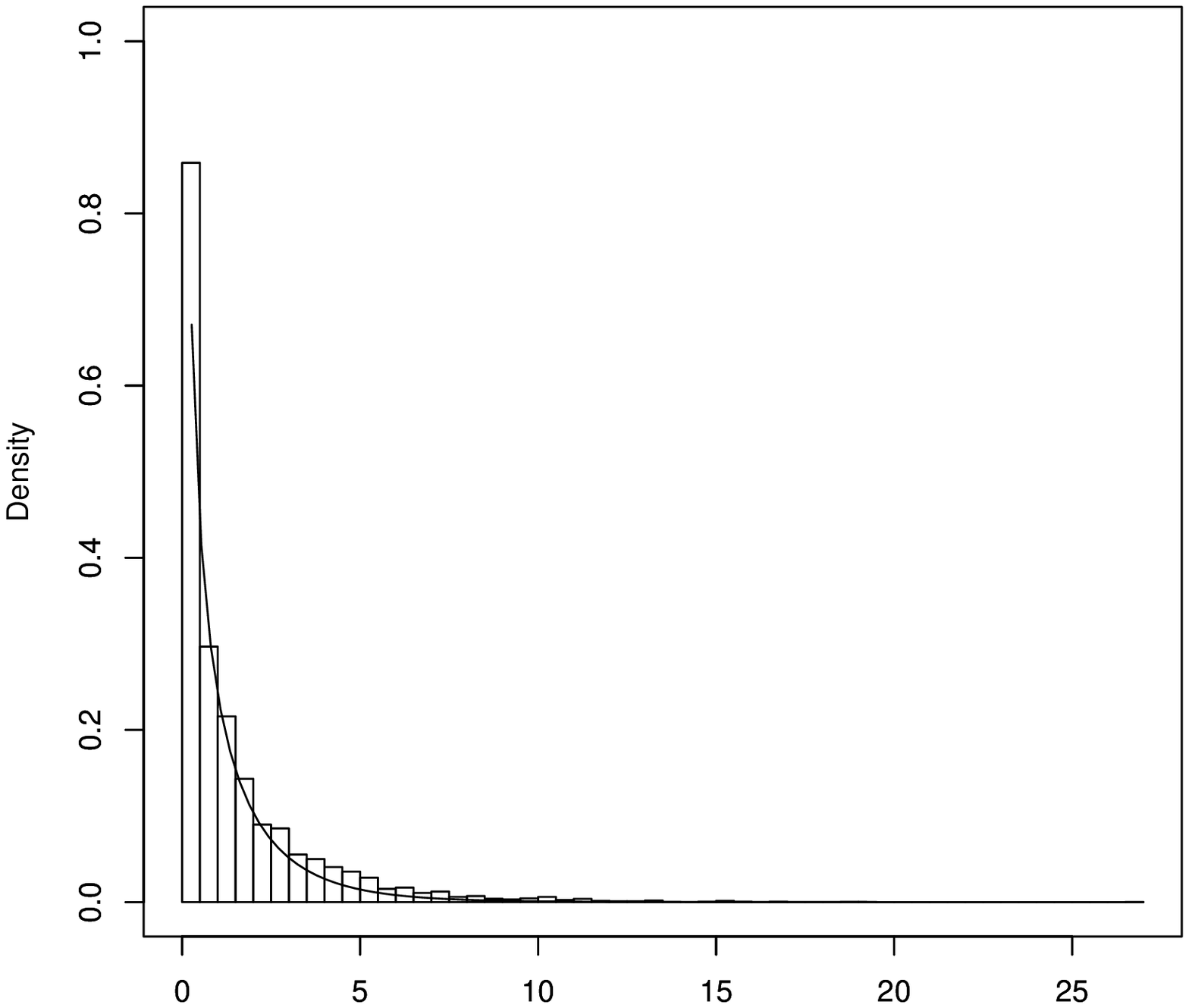}
\includegraphics[scale=0.20]{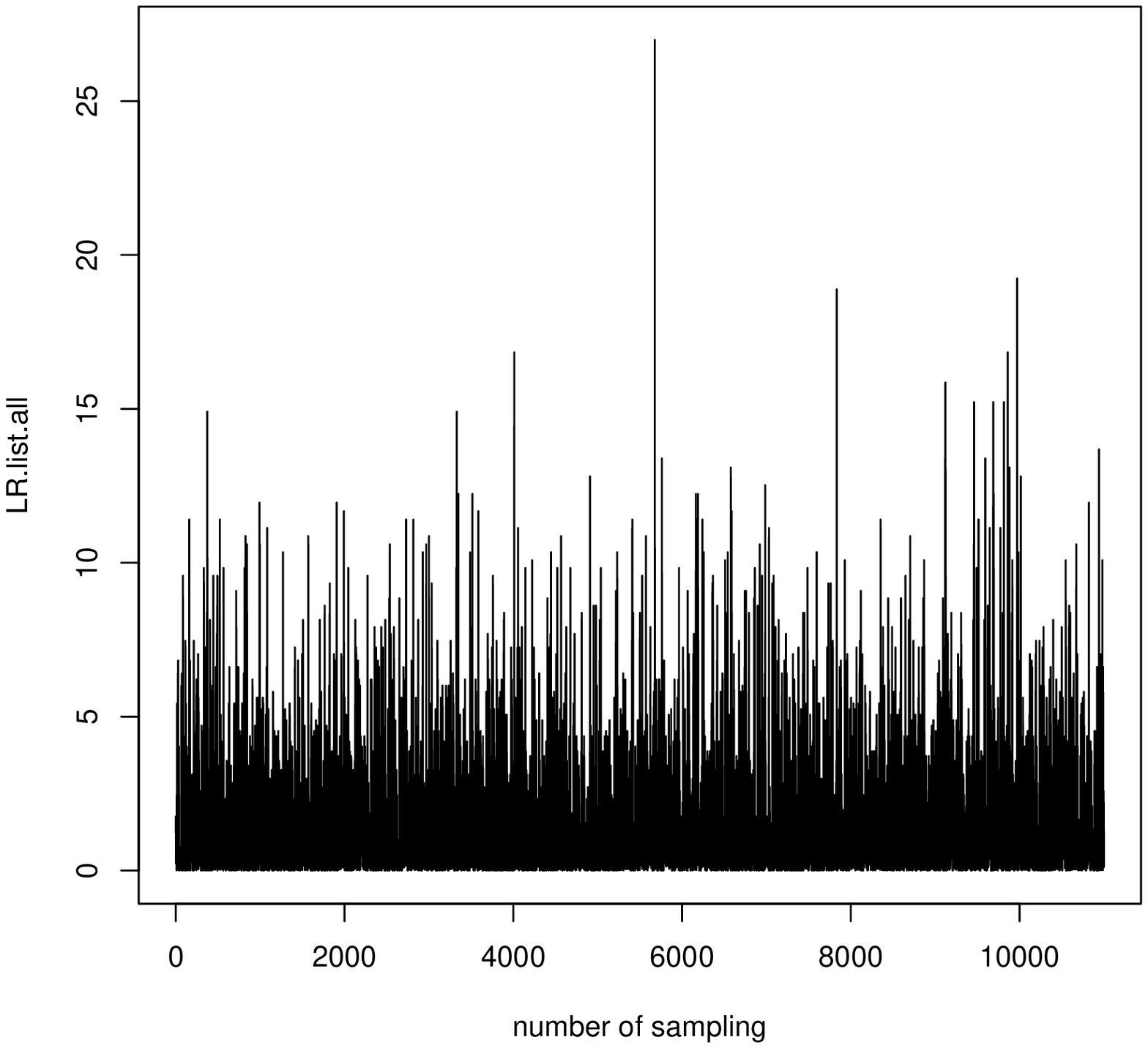}
\includegraphics[scale=0.20]{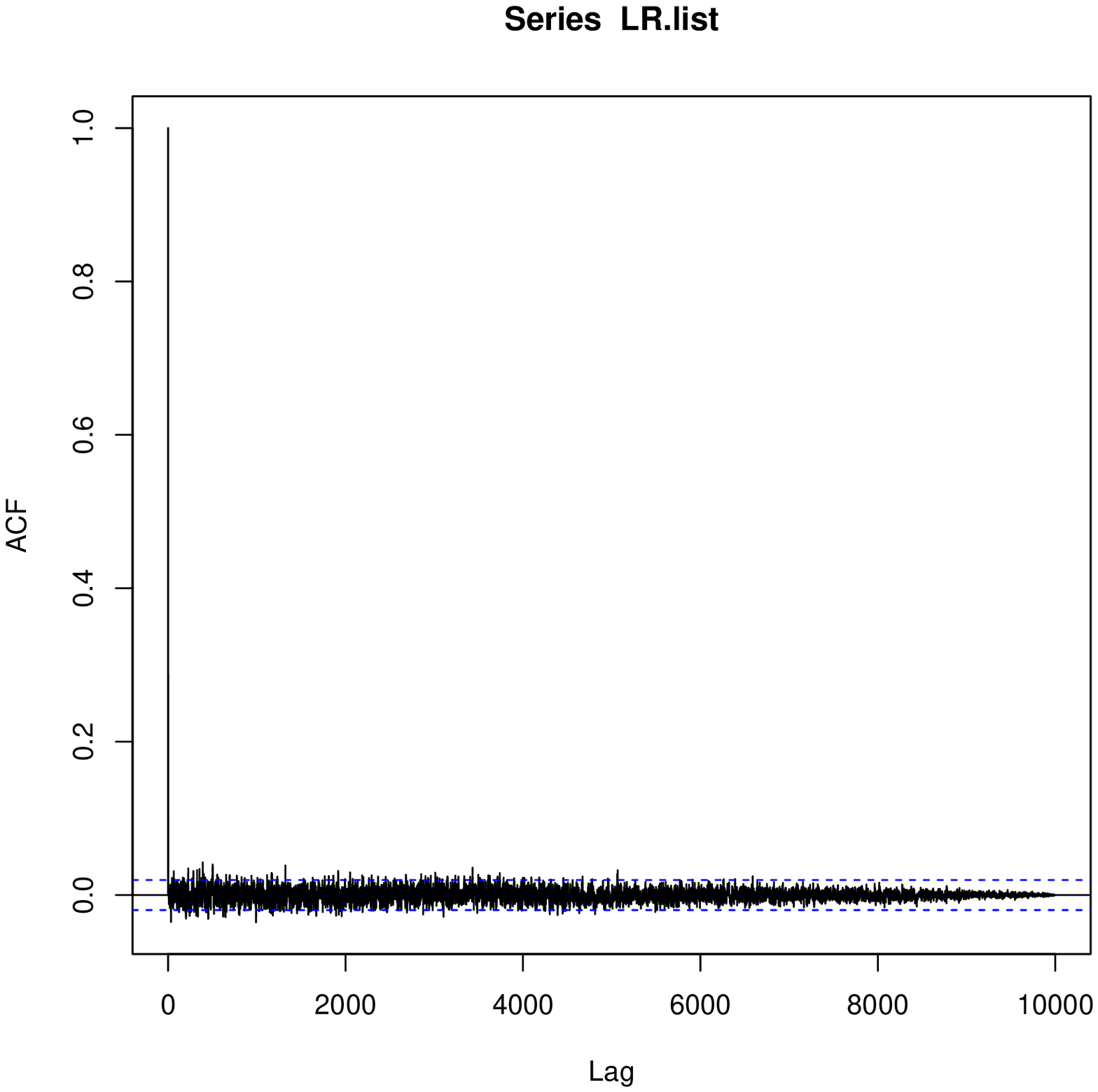}\\
 (b) binomial, a lattice basis with $Geom(0.5)$\\
\includegraphics[scale=0.20]{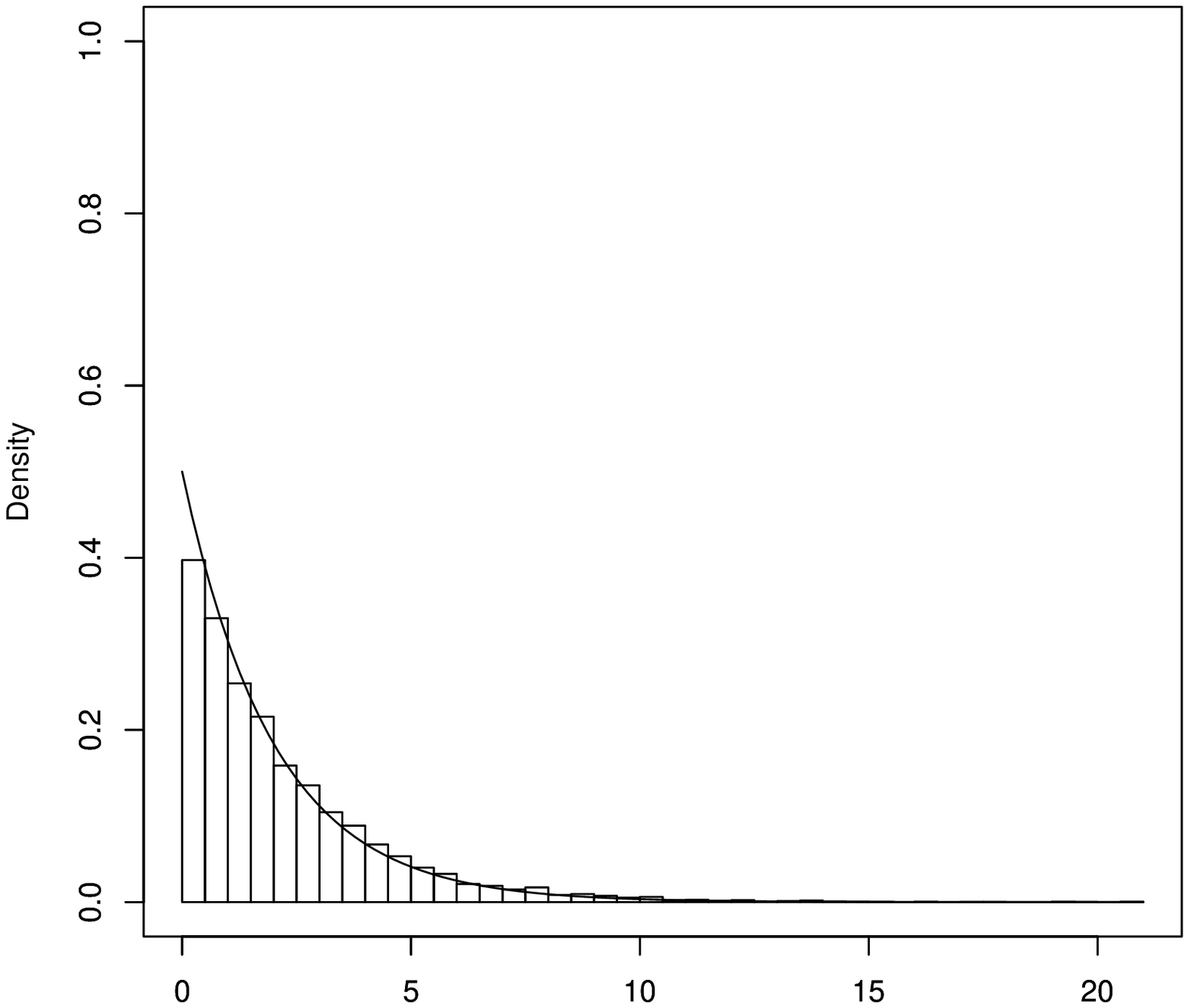}
\includegraphics[scale=0.20]{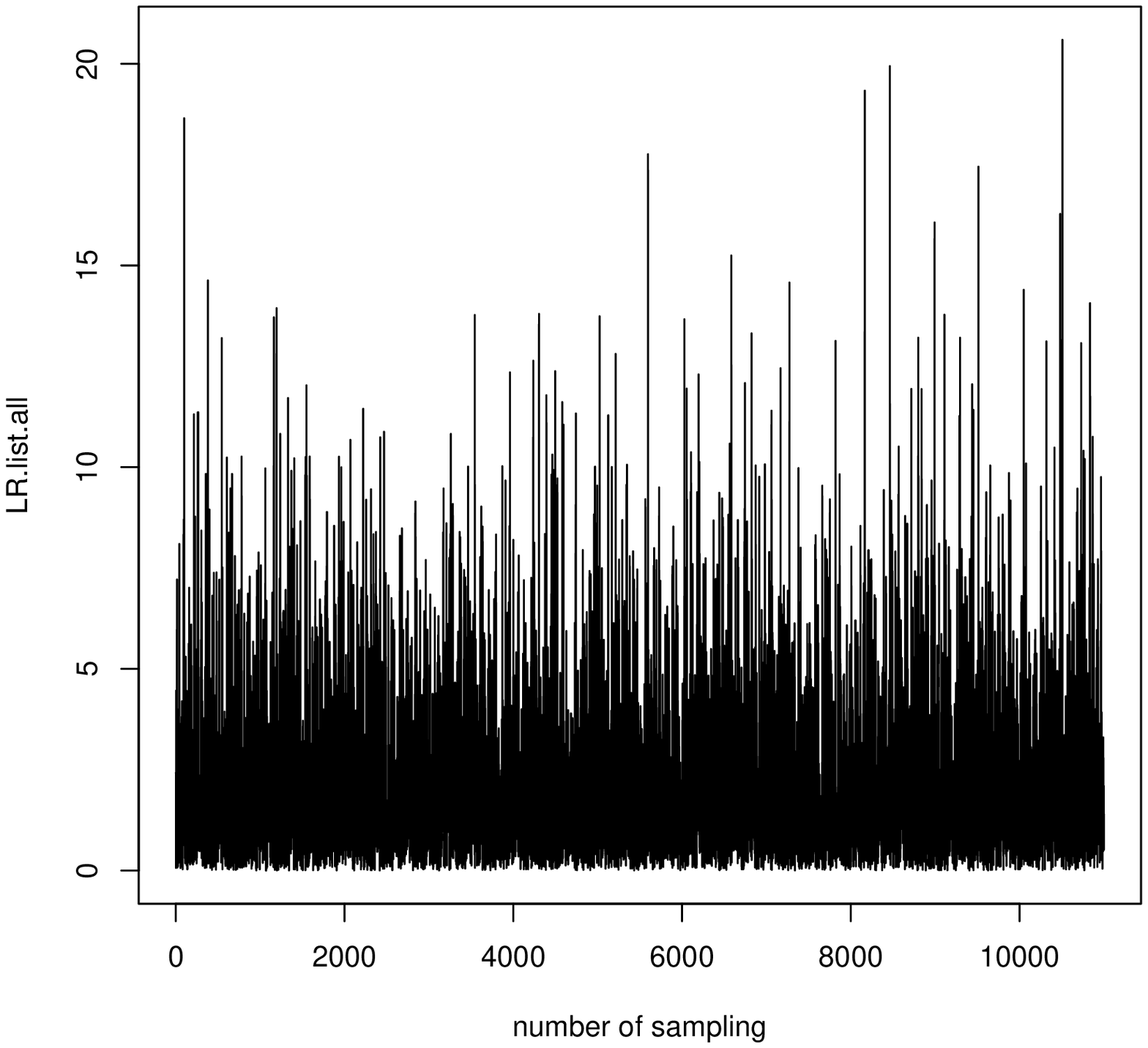}
\includegraphics[scale=0.20]{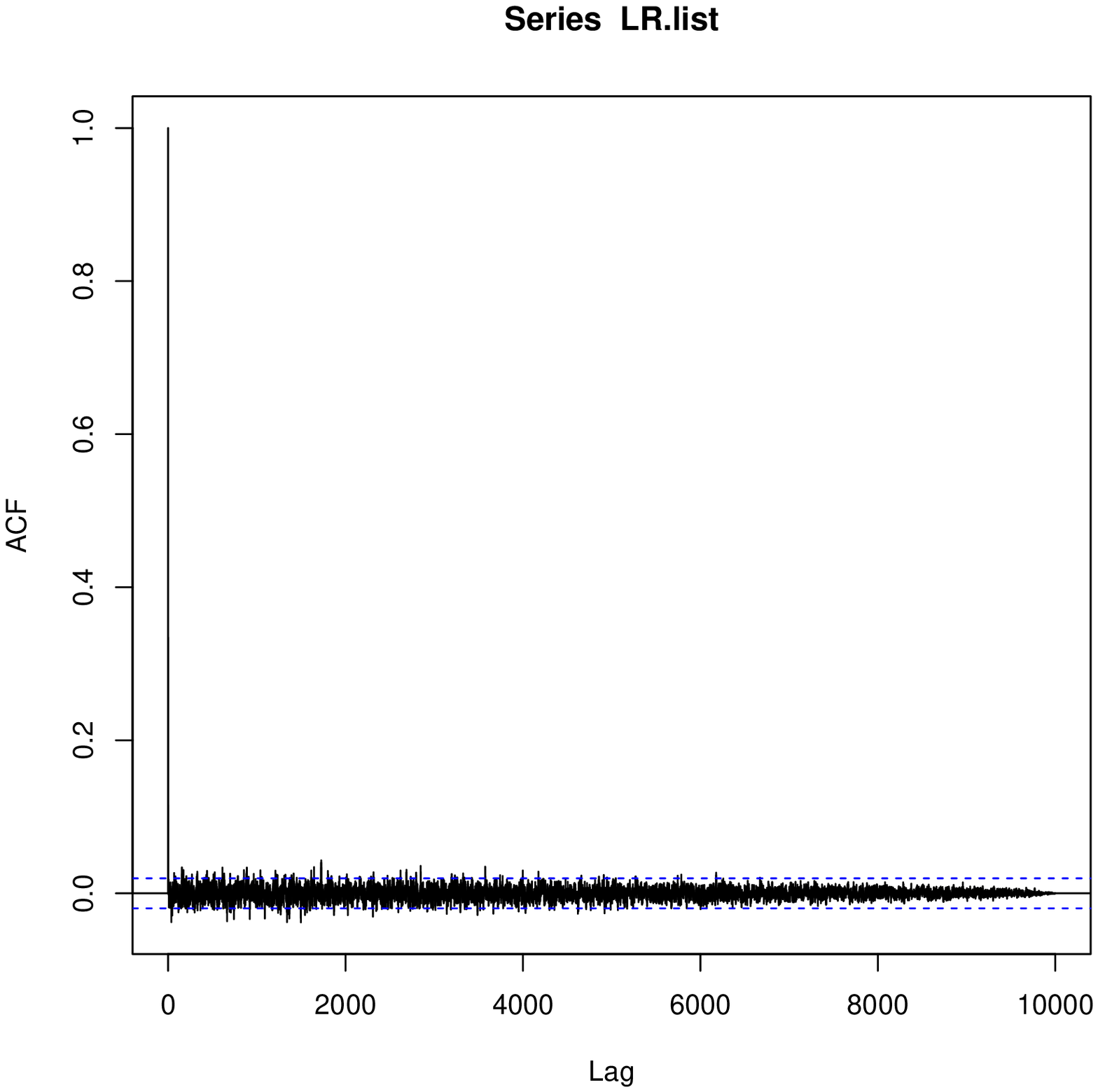}\\
 (c) trinomial, a lattice basis with $Geom(0.1)$\\
\includegraphics[scale=0.20]{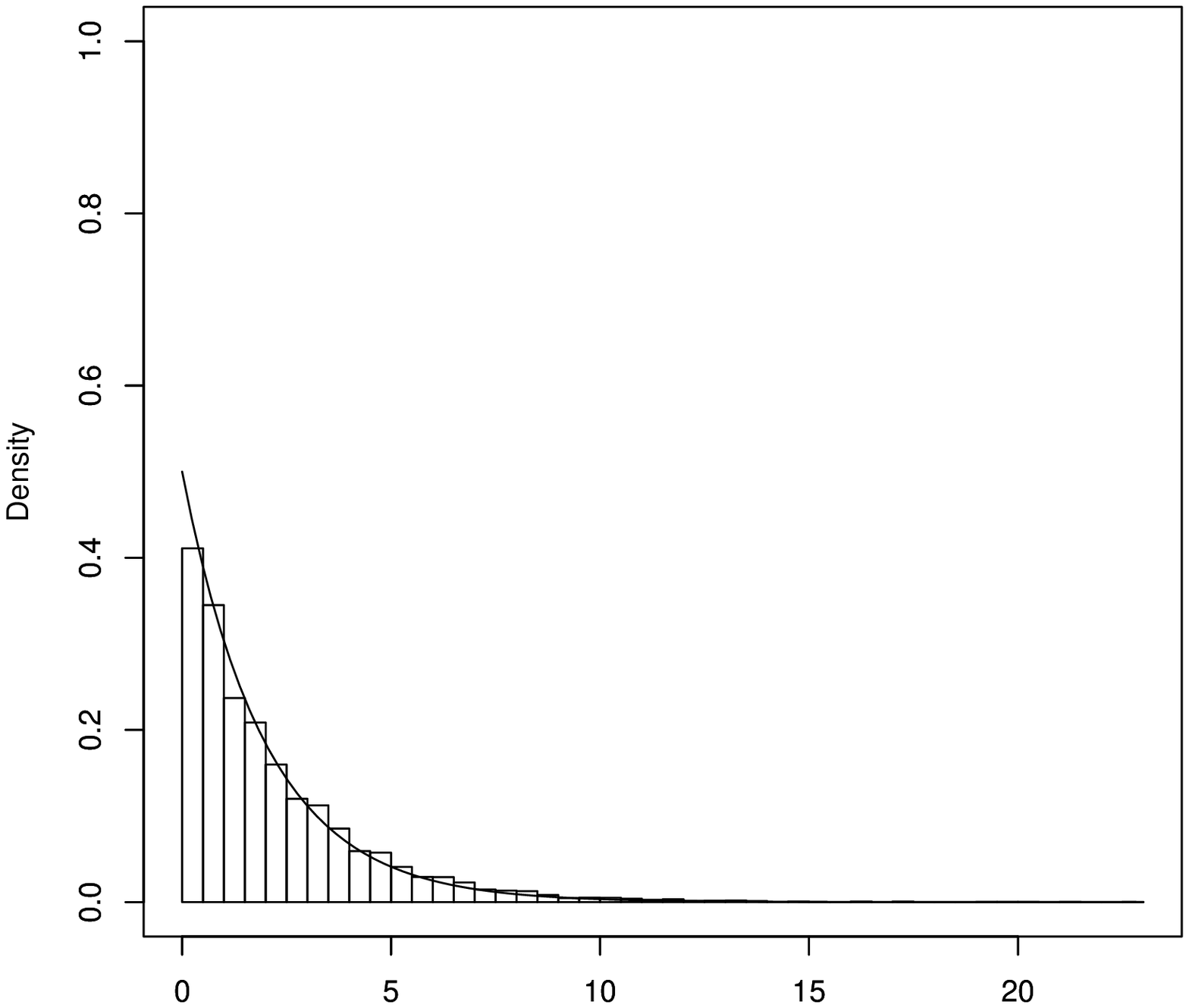}
\includegraphics[scale=0.20]{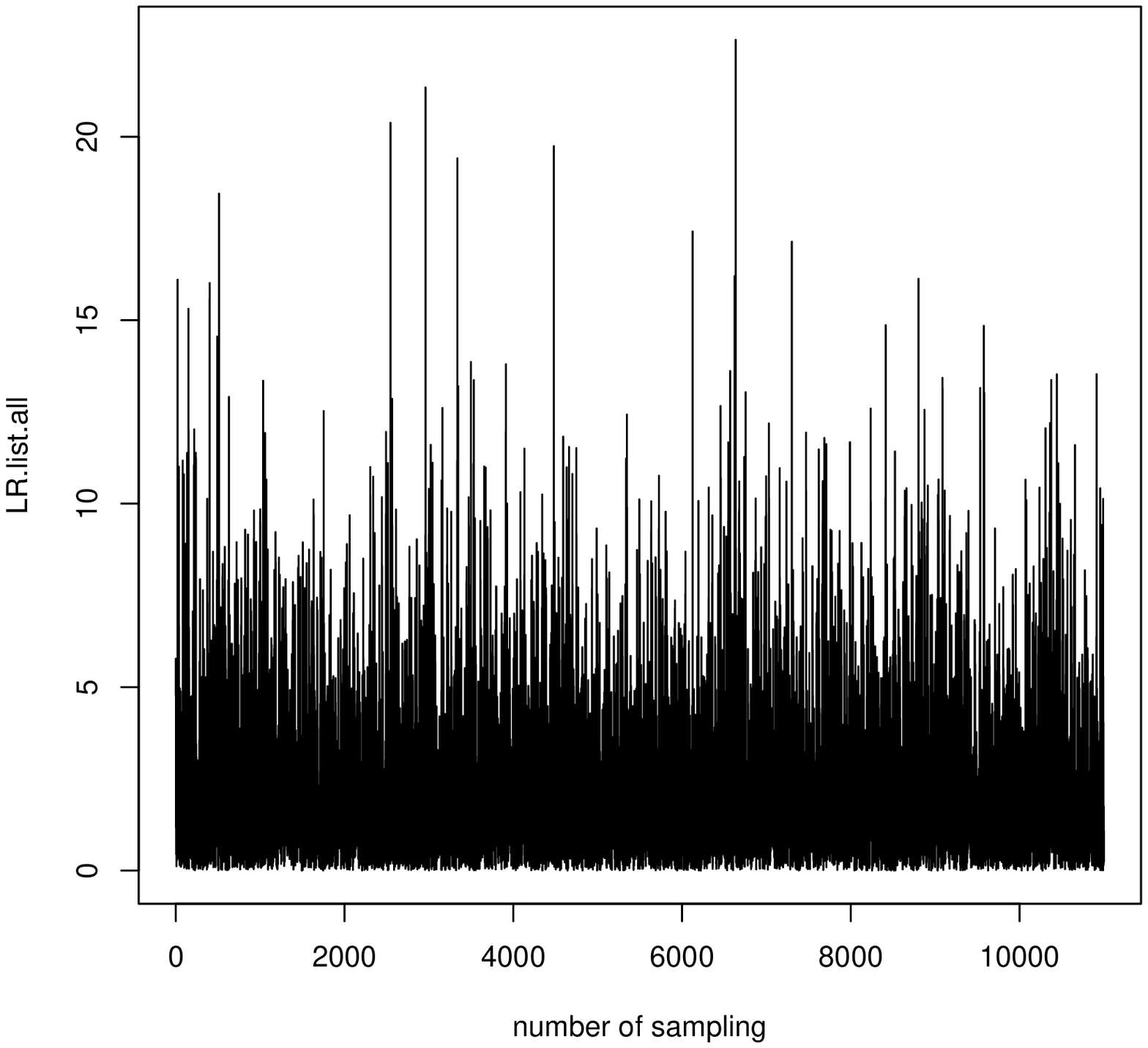}
\includegraphics[scale=0.20]{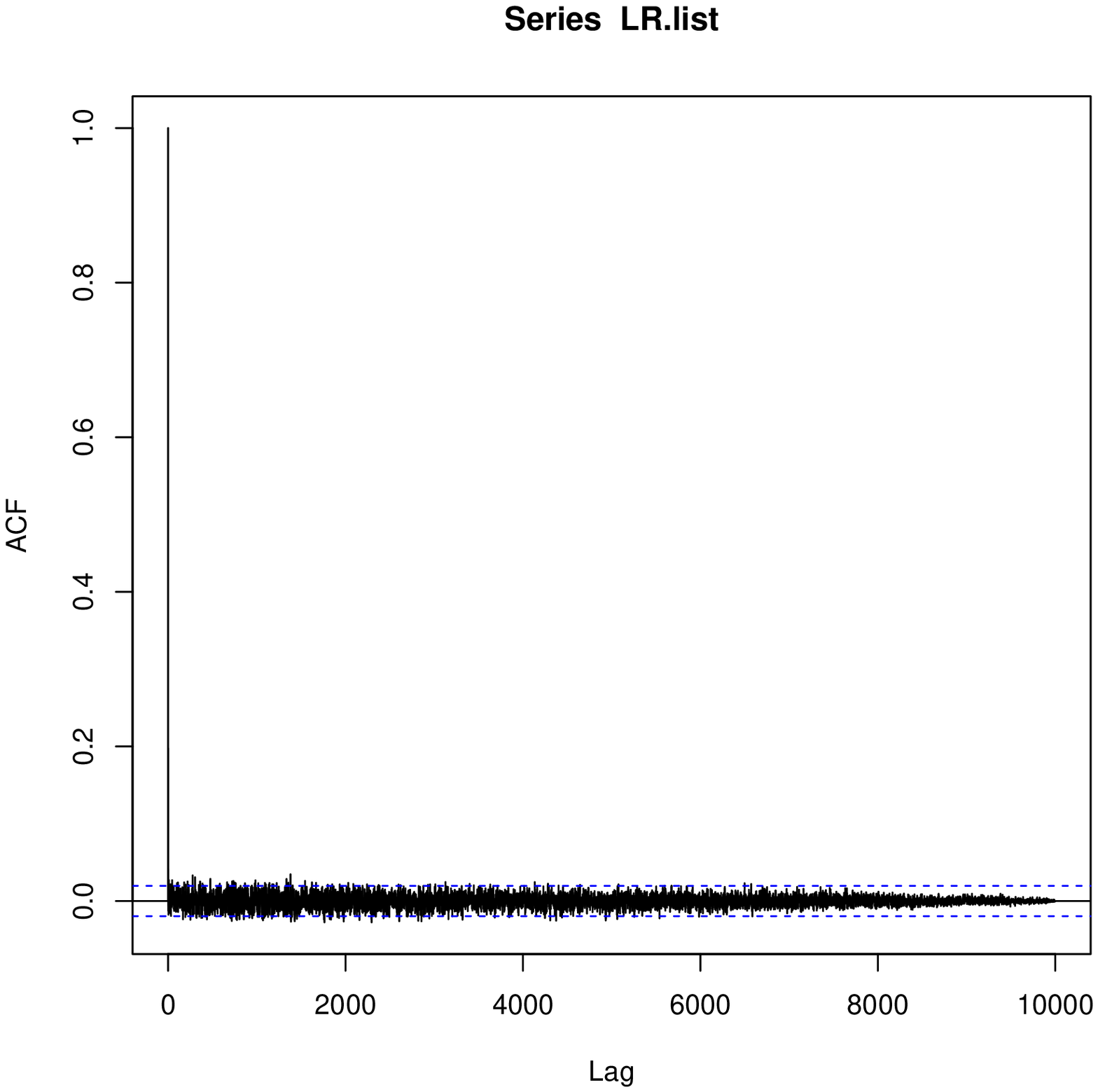}\\
 (d) trinomial, a lattice basis with $Geom(0.1)$\\
 \caption{Histograms, paths of LR statistic and correlograms of paths
 for discrete logistic regression model ((burn in,iteration) $= (1000,10000)$)} 
 \label{fig:logit3}
\end{figure}

\bibliographystyle{ws-procs9x6}
\bibliography{Hara-Aoki-Takemura-LB-arxiv}

\end{document}